\renewcommand{\arraystretch}{1.5}
\newcommand{\ad}{\mathrm{ad}}
\newcommand{\supp}{\mathrm{supp}}
\newcommand{\Aut}{\mathrm{Aut}}
\newcommand{\Spec}{\mathrm{Spec}}
\newcommand{\proj}{\mathrm{proj}}
\newcommand{\gr}{\mathrm{gr}}
\newcommand{\0}{{\bf 0}}
\newcommand{\1}{{\bf 1}}
\theoremstyle{plain}
\newtheorem{theorem}{Theorem}[section]
\newtheorem{theorem*}{Theorem}
\newtheorem{corollary*}{Corollary}
\newtheorem{corollary}[theorem]{Corollary}
\newtheorem{lemma}[theorem]{Lemma}
\newtheorem{proposition}[theorem]{Proposition}
\newtheorem*{claim*}{Claim}
\theoremstyle{definition}
\newtheorem{definition}[theorem]{Definition}
\newtheorem*{notation}{Notation}
\newtheorem{example}[theorem]{Example}
\newtheorem{remark}[theorem]{Remark}
\renewcommand{\phi}{\varphi}
\renewcommand{\epsilon}{\varepsilon}
\title{Code algebras which are axial algebras and their $\mathbb{Z}_2$-gradings}
\author{Alonso Castillo-Ramirez}
\thanks{Departamento de Matematicas, Centro Universitario de Ciencias Exactas e Ingenierias, Universidad de Guadalajara, Mexico, email: alonso.castillor@academicos.udg.mx}
\author{Justin McInroy}
\thanks{School of Mathematics, University of Bristol, Bristol, BS8 1TW, UK, and the Heilbronn Institute for Mathematical Research, Bristol, UK, email: justin.mcinroy@bristol.ac.uk}
\begin{document}
\maketitle

\begin{abstract}
A code algebra $A_C$ is a non-associative commutative algebra defined via a binary linear code $C$. We study certain idempotents in code algebras, which we call \emph{small idempotents}, that are determined by a single non-zero codeword.  For a general code $C$, we show that small idempotents are primitive and semisimple and we calculate their fusion law.  If $C$ is a projective code generated by a conjugacy class of codewords, we show that $A_C$ is generated by small idempotents and so is, in fact, an axial algebra. Furthermore, we classify when the fusion law is $\mathbb{Z}_2$-graded.  In doing so, we exhibit an infinite family of $\mathbb{Z}_2 \times \mathbb{Z}_2$-graded axial algebras - these are the first known examples of axial algebras with a non-trivial grading other than a $\mathbb{Z}_2$-grading.
\end{abstract}

\section{Introduction}

Both code algebras and axial algebras provide a way of axiomatising important features of vertex operator algebras (VOAs). These were first considered by physicists in connection with 2D conformal field theory, but also later by mathematicians.  The most famous example is the Moonshine VOA $V^\natural$, which has the Monster simple sporadic group as its automorphism group and was instrumental in Borcherd's proof of monstrous moonshine.

Code algebras are a new class of commutative non-associative algebras introduced in \cite{codealgebras}. They are an axiomatisation of code VOAs, a class of VOAs where the representation theory is governed by two linear codes.  Moreover, in every framed VOA $V$, such as $V^\natural$, there exists a unique code sub VOA $W$ and $V$ is a simple current extension of $W$ \cite{codesubVOA}.

Given a binary linear code $C$ of length $n$, a \emph{code algebra} $A_C(\Lambda)$ is a commutative non-associative algebra over a field $\mathbb{F}$ with basis
\begin{align*}
t_i & \qquad i = 1, \dots, n\\
e^\alpha & \qquad \alpha \in C^* := C \setminus \{ \0, \1\}
\end{align*}
where $\Lambda := \left\{ a_{i,\alpha}, b_{\alpha,\beta}, c_{i,\alpha} \in \mathbb{F}  : i \in \supp(\alpha), \alpha, \beta \in C^* , \beta \neq \alpha, \alpha^c \right\}$ is a set of \emph{structure parameters} that determine the products $t_i \cdot e^{\alpha}$, $e^\alpha \cdot e^\beta$ and $e^\alpha \cdot e^\alpha$, respectively. One particularly nice choice of structure parameters is where $a = a_{i, \alpha}$, $b = b_{\alpha, \beta}$ and $c= c_{i, \alpha}$ for all $i \in \supp(\alpha)$, $\alpha, \beta \in C^*$. Roughly speaking, the $t_i$ represent the support of the code, the $e^\alpha$ represent the codewords and the multiplication reflects this. For further details see Definition \ref{CodeAlgebra}.

In this paper, we explore when code algebras are also axial algebras and classify when these have a particularly symmetric multiplicative structure, namely that the fusion law is $\mathbb{Z}_2$-graded. \emph{Axial algebras} are a new class of commutative non-associative algebras that has attracted considerable interest recently (see \cite{Axial2, HSS17, axialstructure, MC18, MR17, R15, R16, S18}) since its introduction by Hall, Rehren and Shpectorov in \cite{Axial1}. The class includes several interesting algebras, in particular, subalgebras of the Greiss algebra, Majorana algebras, Jordan algebras and Matsuo algebras. The defining feature of an \emph{axial algebra} is that it is generated by $\mathcal{F}$-axes.  These are primitive semisimple idempotents which satisfy the fusion law $\mathcal{F}$. More explicitly, the adjoint action of an $\mathcal{F}$-axis $a$ on the algebra decomposes it into a direct sum of eigenspaces
\[
A = \bigoplus_{\lambda \in \mathcal{F}} A_\lambda
\]
where $A_\lambda$ is the $\lambda$-eigenspace, $A_1$ is $1$-dimensional and elements of the eigenspaces multiply according to the fusion law $\mathcal{F}$ (see Definition \ref{axialalgebra} for details).

To show that a code algebra $A_C$ is an axial algebra, we must identify enough primitive, semisimple idempotents to generate the algebra and show that they all satisfy the same fusion law $\mathcal{F}$.  One way to find idempotents is to use the $s$-map construction introduced in \cite[Proposition 5.2]{codealgebras}.

Given a linear subcode $D$ of $C$ and a vector $v \in \mathbb{F}_2^n$
\[
s(D, v) := \lambda \sum_{i \in \supp(D)} t_i + \mu \sum_{\alpha \in D} (-1)^{(\alpha, v)} e^\alpha
\]
is an idempotent of $A_C$, where $\lambda$ and $\mu$ satisfy a linear and quadratic equation (see Proposition \ref{smap}).  In particular, when $D = \{ \0, \alpha \}$, for some $\alpha \in C^*$ and the characteristic of $\mathbb{F}$ is not $2$, or dividing $|\alpha|$, the $s$-map construction gives us two idempotents, which we call \emph{small idempotents}:
\[
e_\pm := \lambda t_\alpha \pm \mu e^\alpha
\]
where $t_\alpha = \sum_{i \in \supp(\alpha)} t_i$.

In \cite{codealgebras}, the eigenvalues, eigenvectors and fusion law were calculated for the small idempotents in the case where $C$ is a constant weight code, that is all non-constant codewords have the same weight.  In this paper, we remove this restriction.  We show that the resulting eigenvalues are $1$, $0$, $\lambda$, $\lambda - \frac{1}{2}$ and $\nu^p_\pm$, for $p=(m, |\alpha|-m)$ that correspond to partitions of the weight of $\alpha$.  We give explicit vectors which form a basis of each eigenspace (see Table \ref{tab:esp}).  In particular, the $1$-eigenspace is $1$-dimensional and the algebra decomposes as a sum of eigenspaces, so $e_\pm$ is a primitive semisimple idempotent.  Furthermore, we calculate its fusion law $\mathcal{F}$, as given in Table \ref{tab:small}.  This allows us to prove the following theorem.

\begin{theorem*}\label{intro:axialalg}
Let $C$ be a projective code and $\alpha \in C$ such that the set $S := \{ \alpha_1, \dots, \alpha_l \}$ of conjugates of $\alpha$ under the action of $\Aut(C)$ generates $C$.  Suppose that the characteristic of $\mathbb{F}$ is not $2$, or dividing $|\alpha|$, and the structure parameters $\Lambda$ satisfy Theorem $\ref{codeisaxialalg}$. Then, the non-degenerate code algebra $A_C(\Lambda)$ is an axial algebra generated by the small idempotents corresponding to the codewords in $S$.
\end{theorem*}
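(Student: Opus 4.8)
To prove the theorem I would verify directly that $A_C(\Lambda)$ satisfies Definition \ref{axialalgebra}: that it is generated by a set of primitive semisimple idempotents which all obey a single fusion law $\mathcal F$. The idempotents in question are the $2l$ small idempotents $e_\pm^{\alpha_i}$; these exist because $|\alpha_i| = |\alpha|$ (the $\alpha_i$ being $\Aut(C)$-conjugate) and the characteristic does not divide $2|\alpha|$. By the eigenspace computation summarised in Tables \ref{tab:esp} and \ref{tab:small}, every small idempotent of $A_C(\Lambda)$ is primitive and semisimple with fusion law $\mathcal F$, so that part is immediate; and since $\Lambda$ satisfies the hypotheses of Theorem \ref{codeisaxialalg}, the coordinate permutations in $\Aut(C)$ act on $A_C(\Lambda)$ by algebra automorphisms, so an automorphism sending $\alpha$ to $\alpha_i$ sends $e_\pm^\alpha$ to $e_\pm^{\alpha_i}$ and preserves the fusion law. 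Thus everything reduces to showing that the subalgebra $B := \langle\, e_\pm^{\alpha_i} : 1 \le i \le l \,\rangle$ is all of $A_C(\Lambda)$, i.e.\ that $B$ contains the basis $\{t_k\} \cup \{e^\gamma : \gamma \in C^*\}$.

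Since the characteristic is not $2$ and $\lambda,\mu \ne 0$ (Proposition \ref{smap} and the non-degeneracy hypothesis), $B$ contains $t_{\alpha_i} = \tfrac{1}{2\lambda}\big(e_+^{\alpha_i}+e_-^{\alpha_i}\big)$ and $e^{\alpha_i} = \tfrac{1}{2\mu}\big(e_+^{\alpha_i}-e_-^{\alpha_i}\big)$ for every $i$. To recover the individual $t_k$, observe that $t_\gamma \cdot t_\delta = t_{\gamma\wedge\delta}$ (intersection of supports), so the family $\mathcal E := \{\,E \subseteq \{1,\dots,n\} : t_E \in B\,\}$ is closed under intersection, and, $B$ being a subspace, also under union ($t_{E\cup F} = t_E + t_F - t_{E\cap F}$) and set difference. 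It contains $\supp(\alpha_i)$ for all $i$ and hence their union, which equals $\supp(C) = \{1,\dots,n\}$ as $A_C(\Lambda)$ is non-degenerate; so $\mathcal E$ is a Boolean subalgebra of $2^{\{1,\dots,n\}}$. Because $S$ spans $C$ and $C$ is projective, for any $k \ne k'$ some $\alpha_i$ has one of $k,k'$ in its support but not the other (if $\delta\in C$ separates $k$ and $k'$, write $\delta$ as a sum of conjugates and use parity), i.e.\ the sets $\supp(\alpha_i)$ separate points; hence every atom of $\mathcal E$ is a singleton and $t_k \in B$ for all $k$.

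It remains to show $e^\gamma \in B$ for every $\gamma \in C^*$. The key product rule (Definition \ref{CodeAlgebra}) is that for $\beta \ne \alpha_i, \alpha_i^c$ one has $e^{\alpha_i} \cdot e^\beta = b_{\alpha_i,\beta}\, e^{\alpha_i + \beta}$ with $\alpha_i + \beta \in C^*$, and the hypotheses of Theorem \ref{codeisaxialalg} make the relevant $b_{\alpha_i,\beta}$ nonzero; the excluded cases give $e^{\alpha_i}\cdot e^{\alpha_i} \in \langle t_k : k\rangle \subseteq B$ and the complement product $e^{\alpha_i}\cdot e^{\alpha_i^c} \in B$. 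Since $S$ generates $C$, every $\gamma \in C^*$ can be written $\gamma = \alpha_{i_1} + \dots + \alpha_{i_r}$; choosing the order of summation so that every partial sum lies in $C^*$ (possible because $S$ generates all of $C$, in particular $\1$, so one can reroute around a partial sum that would be $\0$ or $\1$) and inducting on $r$, we obtain $e^{\alpha_{i_1}+\dots+\alpha_{i_j}} \in B$ for each $j$, and in particular $e^\gamma \in B$. Together with the previous paragraph this shows $B = A_C(\Lambda)$, completing the proof.

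The one genuinely delicate step is this last induction. One must organise the decomposition of $\gamma$ into conjugates so that no partial sum leaves $C^*$ \emph{and} so that only nonzero structure parameters $b_{\cdot,\cdot}$ (together with the well-understood complement products) are used along the way; when a naive ordering fails, the argument must invoke projectivity and the precise conditions of Theorem \ref{codeisaxialalg} and non-degeneracy to choose an alternative decomposition. By contrast, the common-fusion-law statement follows formally from the $\Aut(C)$-action, and the recovery of the $t_k$ is routine once projectivity is read as the columns of a generator matrix being pairwise distinct.
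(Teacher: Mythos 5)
Your proposal is correct and follows the same overall strategy as the paper's proof of Theorem \ref{codeisaxialalg}: split $e_\pm^{\alpha_i}$ into $t_{\alpha_i}$ and $e^{\alpha_i}$, use the fact that $S$ generates $C$ to produce all codeword elements by repeated multiplication, use projectivity to isolate the individual $t_k$, and invoke conjugacy under $\Aut(C)$ for the common fusion law. The one genuinely different sub-step is the recovery of the $t_k$: the paper first obtains every $e^\beta$ and then takes $(e^{\beta_1})^2\cdots(e^{\beta_k})^2$ for codewords whose supports intersect in $\{i\}$ (Lemma \ref{Cproj}), whereas you work directly with the toral sums $t_{\alpha_i}$ and a Boolean-algebra/separation-of-points argument using only the conjugates in $S$. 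Your route is slightly more self-contained (it does not require the codeword elements to have been generated first, and it uses only the generators rather than arbitrary codewords), at the cost of the small parity argument showing that the $\supp(\alpha_i)$ already separate points; both versions ultimately rest on the same reading of projectivity. On the step you rightly flag as delicate --- ordering the decomposition $\gamma=\alpha_{i_1}+\dots+\alpha_{i_r}$ so that no partial sum is $\0$ or $\1$ --- the paper is in fact no more explicit than you are (it simply asserts that multiplying the $e^{\alpha_i}$ generates all codeword elements), so you are not missing anything the paper supplies; if you want to close it, take $r$ minimal (so no proper nonempty sub-sum is $\0$, and at most two complementary sub-sums equal $\1$) and choose an ordering whose initial segments avoid those at most two subsets.
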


For some codes $C$ and special values of the structure parameters, the fusion law may have a $\mathbb{Z}_2$-grading.  If this is the case, for each axis $a$, we get a decomposition $A = A_+ \oplus A_-$.  Moreover, we may then define an algebra automorphism $\tau_a$ given by the linear extension of
\[
v \mapsto \begin{cases}
v & \mbox{if } v \in A_+ \\
-v & \mbox{if } v \in A_-
\end{cases}
\]
The group generated by the set of all $\tau_a$, for each $\mathcal{F}$-axis $a$, is called the \emph{Miyamoto group}. Hence, such graded fusion laws are of particular interest. For the code axial algebras given by Theorem \ref{intro:axialalg}, we classify when their fusion law is $\mathbb{Z}_2$-graded.

\begin{theorem*}\label{introZ2}
Let $A_C$ be a code algebra satisfying the assumptions of Theorem $\ref{intro:axialalg}$.  Then the fusion law of the small idempotents is $\mathbb{Z}_2$-graded if and only if
\begin{enumerate}
\item[$1.$] $|\alpha| = 1$, $C = \mathbb{F}_2^n$
\begin{enumerate}
\item[\textup{(}a\textup{)}] $n = 2$, $a=-1$.
\item[\textup{(}b\textup{)}] $n=3$.
\end{enumerate}

\item[$2.$] $|\alpha| = 2$, and $C = \bigoplus C_i$ is the direct sum of even weight codes all of the same length $m \geq 3$. 

\item[$3.$] $|\alpha| > 2$, and  $D := \proj_\alpha(C)$ has a codimension one linear subcode $D_+$ which is the union of weight sets of $D$ and $\1 := (1, \dots, 1) \in D_+$.

In this case, we have
\begin{align*}
A_+ &= A_1 \oplus A_0 \oplus A_\lambda \oplus A_{\lambda - \frac{1}{2}} \oplus
\bigoplus_{m \in wt(D_+)} A_{\nu^{(m, |\alpha|-m)}_\pm} \\
A_- &= \bigoplus_{m \in wt(D) - wt(D_+)} A_{\nu^{(m, |\alpha|-m)}_\pm}
\end{align*}
\end{enumerate}
\end{theorem*}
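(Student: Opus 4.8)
The plan is to recast a $\mathbb{Z}_2$-grading of the fusion law $\mathcal{F}$ of Table~\ref{tab:small} as a partition of its eigenvalues into $\mathcal{F}_+ \sqcup \mathcal{F}_-$ with $1 \in \mathcal{F}_+$ and $\mathcal{F}_\epsilon \star \mathcal{F}_{\epsilon'} \subseteq \mathcal{F}_{\epsilon\epsilon'}$, and then read off from the table exactly when this is possible. First I would observe, directly from Table~\ref{tab:small}, that $1, 0, \lambda$ and $\lambda - \tfrac12$ are forced into $\mathcal{F}_+$ (we have $1 \in 1\star 1$, and $0$, $\lambda$, $\lambda - \tfrac12$ each occur in self-fusions of eigenvalues already placed in $\mathcal{F}_+$), and that $\nu^p_+$ and $\nu^p_-$ must always lie on the same side for each partition $p = (m, |\alpha|-m)$, since their eigenvectors sit over the same set of codewords and are mixed by multiplication. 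So the only freedom is in choosing, for each such $p$, whether the pair $\nu^p_\pm$ goes to $\mathcal{F}_+$ or $\mathcal{F}_-$; using the eigenvectors of Table~\ref{tab:esp} supported on $\beta$ with $\alpha \subseteq \supp(\beta)$, which fuse with $A_{\lambda - 1/2}$, the weight $|\alpha|$ is also forced to the ``$+$'' side.

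Next I would translate the fusion condition into arithmetic in $D := \proj_\alpha(C)$. From Table~\ref{tab:esp}, $A_{\nu^p_\pm}$ with $p = (m, |\alpha|-m)$ is spanned by eigenvectors built from the $e^\beta$ with $wt(\proj_\alpha(\beta)) = m$, and $e^\beta \cdot e^\gamma$ is a (generically non-zero) multiple of $e^{\beta+\gamma}$, with $\proj_\alpha(\beta + \gamma) = \proj_\alpha(\beta) + \proj_\alpha(\gamma) \in D$. Hence $A_{\nu^p_\pm} \cdot A_{\nu^q_\pm}$ involves precisely the eigenspaces $A_{\nu^r_\pm}$ (together with $A_0, A_\lambda, A_{\lambda-1/2}$) for those weights $r = wt(u+v)$ with $u, v \in D$ of weights $m, m'$. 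So a grading exists iff the partition $D = D_+ \sqcup D_-$ given by $D_\pm = \{u \in D : wt(u) \in W_\pm\}$ satisfies $D_\epsilon + D_{\epsilon'} \subseteq D_{\epsilon\epsilon'}$; equivalently $D_+$ is an additive subgroup of $D$ that is a union of weight sets, contains $\1 = \proj_\alpha(\alpha)$, and — so the grading is non-trivial — has index $2$, i.e.\ is a codimension-one linear subcode. This is exactly condition~$3$, and the stated decomposition of $A_\pm$ drops out of the eigenvector description.

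The main obstacle is the ``only if'' half of this reformulation: I must show that none of the products above accidentally vanishes, so that no coarser grading can exist. This needs a non-degeneracy lemma, proving that for $u, v \in D$ of weights $m, m'$ the structure constants controlling the $e^{u+v}$-coefficients in products of eigenvectors from $A_{\nu^{(m,\cdot)}_\pm}$ and $A_{\nu^{(m',\cdot)}_\pm}$ are non-zero under the hypotheses of Theorem~\ref{codeisaxialalg} and non-degeneracy of $A_C$. This is the genuinely technical step, and it is where $|\alpha| \geq 3$ is used, since for $|\alpha| \leq 2$ the weight coincidence $m = |\alpha| - m$ makes the coefficient bookkeeping collapse.

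Finally, $|\alpha| = 1$ and $|\alpha| = 2$ are treated separately. In both cases $D = \mathbb{F}_2^{|\alpha|}$ is forced: it contains $\proj_\alpha(\alpha) = \1$ and, being spanned by the $\proj_\alpha$-images of the $\Aut(C)$-conjugates of $\alpha$, is generated by vectors of weight at most $|\alpha|$. When $|\alpha| = 1$ there is no codimension-one subcode of the required shape, so the grading must be detected by a direct analysis of the fusion law in terms of $n$ and the parameter $a$; combined with the facts that projectivity forces the support of $C$ to be everything and hence $C = \mathbb{F}_2^n$, this yields cases~$1$(a) and~$1$(b). When $|\alpha| = 2$ the only admissible $D_+$ is $\{\0, \1\}$, forcing $D = \mathbb{F}_2^2$; feeding this back through projectivity and the requirement that the conjugates of $\alpha$ generate $C$ forces $C$ to be a direct sum of even-weight codes of a common length $m$, the bound $m \geq 3$ being precisely projectivity of the even-weight code, which is condition~$2$.
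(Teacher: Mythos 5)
Your overall strategy for the main case $|\alpha|>2$ --- force $1,0,\lambda,\lambda-\frac{1}{2}$ into $\mathcal{F}_+$, show each pair $\nu^p_+,\nu^p_-$ shares a grading, and translate the remaining freedom into a codimension-one subcode $D_+$ of $D=\proj_\alpha(C)$ that is a union of weight sets containing $\1$ --- is exactly the paper's (it defines a homomorphism $\gr\colon C\to\mathbb{Z}_2$, shows $\ker(\proj_\alpha)$ is positively graded, and pushes the grading down to $D$), and your handling of $|\alpha|=1,2$ matches the paper's case split. However, two of your intermediate steps do not work as stated. Your mechanism for forcing the partition $(0,|\alpha|)$ to the positive side is wrong: fusing $A_{\nu^p_\pm}$ with $A_{\lambda-\frac{1}{2}}\subseteq A_+$ only gives $\lambda-\frac{1}{2}\star\nu^p_\pm\subseteq\{\nu^p_+,\nu^p_-\}$, which is compatible with the pair lying in \emph{either} part and so yields no sign information. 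The paper's Lemma \ref{nu0} instead multiplies a $(0,|\alpha|)$-eigenvector against an eigenvector from a \emph{different} weight partition $q$ (which exists by projectivity, Lemma \ref{qexists}); the product lands in $A_{\nu^q_\pm}$, and since $\nu^q_+,\nu^q_-$ already share a grading this forces $\gr(\nu^{(0,|\alpha|)}_\pm)=+$. Relatedly, your opening claims are overstated: $\lambda-\frac{1}{2}$ is \emph{not} always forced into $\mathcal{F}_+$ (case 1(a) of the theorem has it in $\mathcal{F}_-$), and the assertion that $\nu^p_+$ and $\nu^p_-$ are ``mixed by multiplication'' needs two distinct arguments --- the $e^\alpha$-coefficient of $w^\beta_+w^\beta_-$ is $-2\xi_\beta b_{\beta,\alpha+\beta}$, nonzero only when $\xi_\beta\neq 0$, and when $\xi_\beta=0$ one must instead use $\lambda\star\nu^p_\pm=\nu^p_\mp$, which requires $|\alpha|>2$ and genuinely fails for $|\alpha|=2$, $p=(1,1)$ (that failure is precisely what admits the $\mathbb{Z}_2\times\mathbb{Z}_2$-grading).

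The second, larger gap is the ``non-degeneracy lemma'' you defer: it is not routine, because individual products $w^\beta_\epsilon w^\gamma_\iota$ \emph{can} vanish for special structure parameters even when all $b$'s are nonzero. The paper's workaround is Lemma \ref{wwneq0}: if $w^\beta_\epsilon w^\gamma_\iota=0$ then $w^\beta_{-\epsilon}w^\gamma_\iota\neq 0$, since simultaneous vanishing would force $\theta^\beta_\epsilon=\theta^\beta_{-\epsilon}$; one nonzero product among the four sign choices then suffices because $\nu^q_+$ and $\nu^q_-$ carry the same grading. Analogous explicit coefficient computations (e.g.\ $\theta^\beta_++\theta^\gamma_-=-2\xi_\beta\neq 0$) are what rule out $n\geq 4$ when $|\alpha|=1$ and settle the $|\alpha|=2$ case. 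Without this step the ``only if'' direction of your argument is incomplete, so as written the proposal is a correct blueprint of the paper's proof with its central technical lemma missing and one forcing argument that would fail.
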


The explicit code algebras and fusion laws obtained in cases 1 and 2 are given in Sections \ref{al=1} and \ref{al=2}, respectively.  Moreover, for some special values of structure parameters in case 2, we get an infinite family of $\mathbb{Z}_2 \times \mathbb{Z}_2$-graded axial algebras.

\begin{theorem*}
If the structure parameters satisfy the conditions in Proposition \ref{Z2xZ2}, then Example $(2)$ in Theorem $\ref{introZ2}$ is a $\mathbb{Z}_2 \times \mathbb{Z}_2$-graded axial algebra.
\end{theorem*}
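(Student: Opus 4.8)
The plan is to exhibit a faithful $\mathbb{Z}_2 \times \mathbb{Z}_2$-grading of the fusion law $\mathcal{F}$ of the small idempotents. Since Theorem \ref{intro:axialalg} already gives us that $A_C(\Lambda)$ is an axial algebra generated by small idempotents, and Theorem \ref{introZ2} already supplies one $\mathbb{Z}_2$-grading of $\mathcal{F}$, all that remains is to produce a second, independent $\mathbb{Z}_2$-grading and to check that the diagonal map $\mathcal{F} \to \mathbb{Z}_2 \times \mathbb{Z}_2$ is a grading with full image.

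When $|\alpha| = 2$ the possible eigenvalues are $1$, $0$, $\lambda$, $\lambda - \tfrac12$, $\nu^{(0,2)}_\pm$ and $\nu^{(1,1)}_\pm$, and writing $\supp(\alpha) = \{p, q\}$ one has, from Table \ref{tab:esp}, that $A_\lambda = \langle t_p - t_q \rangle$, that $A_{\lambda - \frac12}$ lies in $\langle t_p + t_q,\, e^\alpha \rangle$, that $A_{\nu^{(0,2)}_\pm}$ is built from the $\ad$-invariant planes $\langle e^\gamma,\, e^{\alpha + \gamma} \rangle$ with $\supp(\gamma) \cap \supp(\alpha) = \emptyset$, and that $A_{\nu^{(1,1)}_\pm}$ is built from such planes with $|\supp(\gamma) \cap \supp(\alpha)| = 1$; the $\mathbb{Z}_2$-grading of Theorem \ref{introZ2} puts $\nu^{(1,1)}_\pm$ in degree $1$ and everything else in degree $0$. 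I would take the $\mathbb{Z}_2 \times \mathbb{Z}_2$-grading
\begin{align*}
\mathcal{F}_{(0,0)} &= \{1,\ 0,\ \lambda - \tfrac12,\ \nu^{(0,2)}_+,\ \nu^{(0,2)}_-\}, & \mathcal{F}_{(0,1)} &= \{\lambda\}, \\
\mathcal{F}_{(1,0)} &= \{\nu^{(1,1)}_+\}, & \mathcal{F}_{(1,1)} &= \{\nu^{(1,1)}_-\},
\end{align*}
so that the new $\mathbb{Z}_2$-grading (the second coordinate) separates $\{\lambda,\, \nu^{(1,1)}_-\}$ from the rest. This grading has full image provided $A_\lambda$ and $A_{\nu^{(1,1)}_+}$ are nonzero and $\lambda$, $\lambda - \tfrac12$, $\nu^{(1,1)}_+$, $\nu^{(1,1)}_-$ are pairwise distinct: the first holds because $\mathrm{char}\,\mathbb{F} \neq 2$ and, as $m \geq 3$, there is a codeword of $C$ meeting $\supp(\alpha)$ in exactly one coordinate; the second is part of the non-degeneracy assumed in Proposition \ref{Z2xZ2}.

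Verifying that the above really is a grading is where the conditions of Proposition \ref{Z2xZ2} on the structure parameters are used, and is the main work. From the explicit eigenvectors one computes: $(e^\gamma + e^{\alpha + \gamma})(e^\gamma - e^{\alpha + \gamma}) = (e^\gamma)^2 - (e^{\alpha+\gamma})^2$ is a nonzero multiple of $t_p - t_q$ — here one uses that the relevant $c$-parameters agree, so the $t_p$- and $t_q$-coefficients coincide — while $(e^\gamma + e^{\alpha+\gamma})(e^{\gamma'} - e^{\alpha+\gamma'}) = 0$ for $\gamma \neq \gamma'$, whence $\nu^{(1,1)}_+ \ast \nu^{(1,1)}_- \subseteq \{\lambda\}$; $(t_p - t_q)(e^\gamma \pm e^{\alpha+\gamma}) = a(e^\gamma \mp e^{\alpha+\gamma})$, whence $\lambda \ast \nu^{(1,1)}_\pm \subseteq \{\nu^{(1,1)}_\mp\}$; products of two $\nu^{(1,1)}$-eigenvectors of the \emph{same} sign, and products among $\nu^{(0,2)}$-eigenvectors, lie in $A_1 \oplus A_0 \oplus A_{\lambda - \frac12} \oplus A_{\nu^{(0,2)}_+} \oplus A_{\nu^{(0,2)}_-}$ with \emph{no} $t_p - t_q$ term, again because the prescribed parameters make the two $t$-coefficients equal; a $\nu^{(0,2)}_\epsilon$-eigenvector times a $\nu^{(1,1)}_{\epsilon'}$-eigenvector is a multiple of a $\nu^{(1,1)}_{\epsilon'}$-eigenvector; $(t_p - t_q)^2 = t_p + t_q \in A_1 \oplus A_{\lambda - \frac12}$, so $\lambda \ast \lambda \subseteq \{1,\, \lambda - \tfrac12\}$; and the remaining products $0 \ast -$, $(\lambda - \tfrac12) \ast -$ and $\nu^{(0,2)}_\epsilon \ast \nu^{(0,2)}_{\epsilon'}$ all respect the assignment. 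Each of these is a short calculation from the definition of the code algebra and Table \ref{tab:small}. Combined with Theorem \ref{intro:axialalg}, this establishes that $A_C$ is a $\mathbb{Z}_2 \times \mathbb{Z}_2$-graded axial algebra.

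The hard part is precisely this last verification, and it is the reason Proposition \ref{Z2xZ2} imposes conditions on the parameters: for an arbitrary choice, the products $\nu^{(1,1)}_\pm \ast \nu^{(1,1)}_\pm$ (and $\nu^{(1,1)}_+ \ast \nu^{(1,1)}_-$) would pick up a stray $t_p - t_q$ component, i.e. land in $A_\lambda$, which has degree $(0,1) \neq (0,0)$, and so would destroy the grading; the conditions of Proposition \ref{Z2xZ2} are exactly what force the offending coefficients to vanish and give the $2 \times 2$ blocks of the adjoint action the symmetry the grading requires. The remaining input — non-degeneracy and $m \geq 3$ — is what guarantees the four graded pieces are genuinely nonzero, so that the grading is faithfully $\mathbb{Z}_2 \times \mathbb{Z}_2$ and not merely a $\mathbb{Z}_2$-grading in disguise.
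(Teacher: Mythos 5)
Your proposal is correct and follows essentially the same route as the paper: one verifies, using the conditions $c_\beta = c_{\alpha+\beta}$ and $b_{\beta,\gamma}=b_{\alpha+\beta,\gamma}$, that the products $\nu^{(1,1)}_\epsilon \star \nu^{(1,1)}_\iota$ and $\nu^{(1,1)}_\epsilon \star \nu^{(0,2)}_\iota$ refine to $\{\lambda\}$ (opposite signs), avoid $A_\lambda$ (same signs), and preserve the sign $\epsilon$, respectively, and then reads off the grading from the resulting table. Your assignment of $\lambda,\nu^{(1,1)}_+,\nu^{(1,1)}_-$ to the three non-identity elements of $\mathbb{Z}_2\times\mathbb{Z}_2$ differs from the paper's only by an automorphism of the group, which is immaterial; the only slight imprecision is that $w^\gamma_+w^{\gamma'}_-$ for $\gamma'=\alpha+\gamma$ is a nonzero multiple of $t_p-t_q$ rather than zero, but this still lies in $A_\lambda$ and does not affect the conclusion.
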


These are the first known examples of axial algebras with  a non-trivial grading other than a $\mathbb{Z}_2$-grading.

The structure of the paper is as follows.  In Section \ref{sec:background}, we introduce code algebras and axial algebras and review all the relevant preliminary results we will need. The eigenvalues and eigenvectors of small idempotents are calculated in Section \ref{sec:eigen}, hence showing that small idempotents are primitive and semisimple.  Section \ref{sec:fusion} deals with their fusion law.  In Section \ref{sec:examples}, we prove Theorem \ref{intro:axialalg} and give some examples of code algebras which are axial algebras.  In particular, we do the examples where $|\alpha|$ is $1$, or $2$, which are $\mathbb{Z}_2$-graded, and also the $\mathbb{Z}_2 \times \mathbb{Z}_2$-graded example.  The classification of when the fusion law is $\mathbb{Z}_2$-graded is completed in Section \ref{sec:grading}.

\medskip

We would like to thank the Mexican Academy of Sciences for a grant under the Newton Fund/CONACYT for a visit of the second author to the first author in Guadalajara where the majority of this work was done.

\section{Background}\label{sec:background}

We begin by reviewing some facts about codes and fixing notation, before giving the definition and some brief details about axial and code algebras.

Throughout this paper, all algebras will be commutative and non-associative, by which we mean not necessarily associative.

\subsection{Binary linear codes}

Let $\mathbb{F}_2$ be the field with two elements. Recall that a rank $k$ binary linear code $C$ of length $n$ is a $k$-dimensional subspace of $\mathbb{F}_2^n$.  For any $\alpha = (\alpha_1, \dots, \alpha_n) \in \mathbb{F}_2^n$, denote its support by 
\[
\supp(\alpha) := \{ i = 1, \dots, n: \alpha_i = 1 \},
\]
and its Hamming weight by $\vert \alpha \vert :=  \vert \supp(\alpha) \vert$. The support of the code $C$ itself is defined to be $\supp(C) := \bigcup_{\alpha \in C} \supp(\alpha)$ and the set of weights of the codewords in $C$ is denoted $\mbox{wt}(C) := \{ \vert \alpha \vert : \alpha \in C \}$.

A \emph{weight set} of $C$ is the set
\[
W_w(C) = \{ \alpha \in C : |\alpha| = w \}
\]
of all codewords in $C$ of weight $w$.

Two codes $C$ and $D$ are \emph{equivalent} if there exists $g \in S_n$ such that $C^g = D$, where $S_n$ acts naturally on $C$ by permuting the coordinates of the codewords.  We define the automorphism group of $C$ as $\Aut(C) := \{ g \in S_n : C^g = C \}$.

We write $C^*$ for the non-constant codewords in $C$; that is, all codewords which are not $\0 := (0, \dots, 0)$ or $\1 := (1, \dots, 1)$. If $\1 \in C$, then every $\alpha \in C$ has a complement, denoted by $\alpha^c := \1 + \alpha$.  Conversely, if some $\alpha \in C$ has a complement, then $\1 \in C$ and every codeword in $C$ has a complement.

A \emph{generating matrix} for a rank $k$ binary linear code $C$ of length $n$ is a $k \times n$ matrix $G$ whose rows are a basis of $C$. Note that two codes $C$ and $D$ are equivalent if there is a generating matrix $G$ for $C$ and a permutation matrix $P$ such that $G P$ is a generating matrix for $D$.

Given two codes $C$ and $D$, the \emph{direct sum} $C \oplus D$ is the binary linear code whose generating matrix is given by the block diagonal matrix where the two blocks are generating matrices of $C$ and $D$.  A code is called \emph{indecomposable} if it is not equivalent to the direct sum of two non-trivial binary linear codes.

The \emph{dual code} $C^\perp$ of $C$ is the set of all $v \in \mathbb{F}_2^n$ such that $(v,C) = 0$, where $(\cdot, \cdot)$ is the usual inner product.

\begin{definition}
A binary linear code $C$ is \emph{projective} if the minimum weight of a codeword in $C^\perp$ is at least three.
\end{definition}

Let $M$ be a generating matrix for $C$.  Note that $C^\perp$ has a codeword of weight $1$ if and only if $M$ has a column equal to zero, and $C^\perp$ has a codeword of weight $2$ if and only if two columns of $M$ are equal. Thus, $C$ is projective if and only if $M$ has no column equal to zero and its columns are pairwise distinct.

\begin{lemma}\label{Cproj}
Let $C$ be a binary linear code.  Then $C$ is projective if and only if for all $i = 1, \dots, n$, there exists a set of codewords $S$ such that
\[
\{i\} = \bigcap_{\alpha \in S}  \supp(\alpha)
\]
\end{lemma}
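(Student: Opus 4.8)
The plan is to lean on the characterization of projectivity recorded just above the lemma: for a generating matrix $M$ of $C$, the code $C$ is projective if and only if $M$ has no zero column and its columns are pairwise distinct. The whole proof is then a dictionary translation between this column condition on $M$ and the support condition on codewords, using that a codeword $\alpha$ satisfies $i \in \supp(\alpha)$ precisely when the $i$-th coordinate functional $x_i \colon C \to \mathbb{F}_2$, $\alpha \mapsto \alpha_i$, takes the value $1$ on $\alpha$, and that $x_i$ is identically zero on $C$ exactly when column $i$ of $M$ is zero (since the rows of $M$ span $C$).

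For the reverse implication, assume that for each $i$ there is a nonempty set $S$ of codewords with $\bigcap_{\alpha \in S}\supp(\alpha) = \{i\}$. Then $i \in \supp(\alpha)$ for every $\alpha \in S$, so $x_i$ is not identically zero and column $i$ of $M$ is nonzero. Moreover, if two columns $i \neq j$ of $M$ were equal, every codeword $\alpha$ would satisfy $\alpha_i = \alpha_j$, hence $i \in \supp(\alpha) \iff j \in \supp(\alpha)$; applied to the members of the set $S$ associated to the coordinate $i$, this gives $j \in \bigcap_{\alpha \in S}\supp(\alpha) = \{i\}$, a contradiction. Hence the columns of $M$ are nonzero and pairwise distinct, so $C$ is projective.

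For the forward implication, fix $i$ and take $S := \{\alpha \in C : i \in \supp(\alpha)\} = \{\alpha \in C : x_i(\alpha) = 1\}$. Projectivity makes column $i$ of $M$ nonzero, so $x_i \not\equiv 0$ on $C$, $S \neq \emptyset$, and $i \in \bigcap_{\alpha \in S}\supp(\alpha)$. Suppose some $j \neq i$ also lies in this intersection; then $\{x_i = 1\} \subseteq \{x_j = 1\}$ as subsets of $C$. Since column $j$ is also nonzero, $x_j \not\equiv 0$, so $\{x_i = 1\}$ and $\{x_j = 1\}$ are both cosets of index-two subgroups of $C$ and hence have the same finite cardinality $|C|/2$; a set cannot be properly contained in another of the same size, so the two cosets coincide, forcing $x_i = x_j$ on $C$ and therefore columns $i$ and $j$ of $M$ to be equal — contradicting projectivity. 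Thus $\bigcap_{\alpha \in S}\supp(\alpha) = \{i\}$.

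I do not expect a genuine obstacle, as the argument is essentially a translation of definitions. The only steps needing a little care are the cardinality argument in the forward direction — one could instead construct $S$ by hand, producing for each $j \neq i$ a single codeword containing $i$ but not $j$ in its support by combining a codeword with $i$ in its support and a row of $M$ on which columns $i$ and $j$ differ, and then taking $S$ to be the union of these — and a sanity check of the degenerate small-length cases against the convention for an empty intersection.
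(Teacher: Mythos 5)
Your proof is correct and follows essentially the same route as the paper's: both reduce the support condition to the column condition on a generating matrix (equivalently, to the absence of weight-one and weight-two words in $C^\perp$), using the set $S$ of all codewords supported at $i$. The only difference is that you spell out, via the coset-cardinality argument, why $\{\alpha : \alpha_i = 1\} \subseteq \{\alpha : \alpha_j = 1\}$ forces $\alpha_i = \alpha_j$ for all $\alpha \in C$ --- a step the paper's proof uses implicitly --- so no further changes are needed.
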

\begin{proof}
Suppose that the above property holds.  Then, for all $i$, there exists a codeword $\alpha \in C$ with $\alpha_i = 1$ and hence $C^\perp$ has no codewords of weight $1$.  Moreover, for all $i \neq j$, there exists $\alpha \in C$ such that $\alpha_i \neq \alpha_j$.  Hence, $C^\perp$ has no codeword of weight $2$ and $C$ is projective.

Conversely, suppose that the above property does not hold for some $i = 1, \dots , n$. Either there does not exist a codeword in $C$ supported on $i$, and hence $C^\perp$ contains a codeword of weight one, or there exists $i \neq j$ such that for every codeword $\alpha \in C$, $\alpha_i = \alpha_j$, and hence $C^\perp$ has a codeword of weight two.  In any case, $C$ is not projective.
\end{proof}

Let $S$ be a subset of $\{1, \dots, n\}$ and denote by $\proj_S \colon C \to \mathbb{F}_2^{n-|S|}$ the usual projection map.  Then, the \emph{projection} $\proj_S(C)$ is a binary linear code.  Note that it is the same as the code formed by puncturing the code at all places in $S^c$.  For $\alpha \in C$, we write $\proj_\alpha$ for $\proj_{\supp(\alpha)}$.  By considering the generating matrices, it is easy to see that, if $C$ is a projective code, then $\proj_S(C)$ is also a projective code.

\subsection{Axial algebras}

In this section, we will review the basic definitions related to axial algebras. For further details, see \cite{Axial1, axialstructure}. Let $\mathbb{F}$ be a field not of characteristic two, $\mathcal{F} \subseteq \mathbb{F}$ a subset, and $\star \colon \mathcal{F} \times \mathcal{F} \to 2^{\mathcal{F}}$ a symmetric map. We call the pair $(\mathcal{F}, \star)$ a \emph{fusion law over $\mathbb{F}$} and a single instance $\lambda \star \mu$ a \emph{fusion rule}. We will extend the operation $\star$ to arbitrary subsets $U, V \subseteq \mathcal{F}$ by $U \star V = \bigcup_{\lambda \in U, \mu \in V} \lambda \star \mu$.

Let $A$ be a non-associative (i.e.\ not-necessarily-associative) commutative algebra over $\mathbb{F}$. For an element $a \in A$, the adjoint endomorphism $\ad_a$ is defined by $\ad_a(v):=av$, $\forall v \in A$. Let $\Spec(a)$ be the set of eigenvalues of $\ad_a$, and for $\lambda \in \Spec(a)$, let $A_\lambda(a)$ be the $\lambda$-eigenspace of $\ad_a$. Where the context is clear, we will write $A_\lambda$ for $A_\lambda(a)$.

\begin{definition}\label{axialalgebra}
Let $(\mathcal{F}, \star)$ be a fusion law over $\mathbb{F}$. An element $a \in A$ is an \emph{$\mathcal{F}$-axis} if the following hold:
\begin{enumerate}
\item $a$ is \emph{idempotent} (i.e.\ $a^2 = a$),
\item $a$ is \emph{semisimple} (i.e.\ the adjoint $\ad_a$ is diagonalisable),
\item $a$ is \emph{primitive} (i.e.\ $A_1$ is the linear span of $a$),
\item $\Spec(a) \subseteq \mathcal{F}$ and $A_\lambda A_\mu \subseteq \bigoplus_{\gamma \in \lambda \star \mu } A_{\gamma}$, for all $\lambda, \mu \in \Spec(a)$. 
\end{enumerate}
\end{definition}

\begin{definition}
A non-associative commutative algebra is an \emph{$\mathcal{F}$-axial algebra} if it is generated by $\mathcal{F}$-axes.
\end{definition}

When the fusion law is clear from context we drop the $\mathcal{F}$ and simply use the term \emph{axial algebra}. The Monster fusion law is given by:

\begin{table}[!htb]
\setlength{\tabcolsep}{4pt}
\renewcommand{\arraystretch}{1.5}
\centering
\begin{tabular}{c||c|c|c|c}
 & $1$ & $0$ & $\frac{1}{4}$ & $\frac{1}{32}$ \\ \hline \hline
$1$ & $1$ &  & $\frac{1}{4}$ & $\frac{1}{32}$ \\ \hline
$0$ &  & $0$ &$\frac{1}{4}$ & $\frac{1}{32}$ \\ \hline
$\frac{1}{4}$ & $\frac{1}{4}$ & $\frac{1}{4}$ & $1, 0$ & $\frac{1}{32}$ \\ \hline
$\frac{1}{32}$ & $\frac{1}{32}$  & $\frac{1}{32}$ & $\frac{1}{32}$ & $1, 0, \frac{1}{4}$ 
\end{tabular}
\vspace{3pt}
\caption{Monster fusion law}
\end{table}

\noindent and are exhibited by the $2A$-axes in the Griess algebra.  A \emph{Majorana algebra} is an axial algebra with the Monster fusion law which also satisfies some additional axioms (see \cite{I09} for details). These kinds of algebra generalise subalgebras of the Griess algebra.

\begin{definition}
The fusion law $\mathcal{F}$ is \emph{$T$-graded}, where $T$ is a finite abelian group, if there exist a partition $\{ \mathcal{F}_t : t \in T \}$ of $\mathcal{F}$ such that for all $s,t \in T$,
\[
\mathcal{F}_s \star \mathcal{F}_t \subseteq \mathcal{F}_{st}
\]
\end{definition}
We allow the possibility that some part $\mathcal{F}_t$ is the empty set. Let $A$ be an algebra and $a \in A$ an $\mathcal{F}$-axis (we do not require $A$ to be an axial algebra).  If $\mathcal{F}$ is \emph{$T$-graded}, then the axis $a$ defines a \emph{$T$-grading} on $A$ where the $t$-graded subspace $A_t$ of $A$ is
\[
A_t = \bigoplus_{\lambda \in \mathcal{F}_t } A_\lambda(a)
\]
When $\mathcal{F}$ is $T$-graded we may define some automorphisms of the algebra.  Let $T^*$ denote the linear characters of $T$.  That is, the homomorphisms from $T$ to $\mathbb{F}^\times$. For an axis $a$ and $\chi\in T^*$, consider the linear map $\tau_a(\chi)\colon A\to A$ defined by 
\[
u \mapsto \chi(t) u \qquad \mbox{for } u \in A_t(a)
\]
and extended linearly to $A$. Since $A$ is $T$-graded, this map $\tau_a(\chi)$ is an automorphism of $A$. Furthermore, the map sending $\chi$ to $\tau_a(\chi)$ is a homomorphism from $T^*$ to $\Aut(A)$.

The subgroup $T_a := \langle \tau_a(\chi) : \chi \in T^* \rangle$ is called the \emph{axial subgroup} corresponding to $a$.  For a set $S$ of $\mathcal{F}$-axes, the subgroup $\langle T_a : a \in S \rangle$ of $\Aut(A)$ is called the \emph{Miyamoto group with respect to $S$}.  When $A$ is an axial algebra and $S$ is its generating set of axes, we write $G := \mbox{Miy}(A)$ for the Miyamoto group.

We are particularly interested in $\mathbb{Z}_2$-graded fusion laws. In this case, we identify $\mathbb{Z}_2$ with the group $\{ +, - \}$ equipped with the usual multiplication of signs. For example, the Monster fusion law $\mathcal{F}$ is $\mathbb{Z}_2$-graded where $\mathcal{F}_+ = \{ 1,0,\frac{1}{4} \}$ and $\mathcal{F}_- = \{ \frac{1}{32} \}$. 

When the fusion law is $\mathbb{Z}_2$-graded and $\mathrm{char}(\mathbb{F}) \neq 2$, then $T^* = \{ \chi_1, \chi_{-1}\}$, where $\chi_1$ is the trivial character on $T = \mathbb{Z}_2$ and $\chi_{-1}$ is the sign character. Here, the axial subgroup contains just one non-trivial automorphism, $\tau_a(\chi_{-1})$.  We write this as $\tau_a \colon A \to A$ and call it the \emph{Miyamoto involution associated to $a$}.  It is defined by the linear extension of
\[
v \tau_a = \begin{cases}
v & \mbox{if } v \in A_+ \\
-v & \mbox{if } v \in A_-
\end{cases}
\]

\subsection{Code algebras}

We define code algebras as non-associative algebras that generalise some properties of code VOAs.

\begin{definition}\label{CodeAlgebra}
Let $C \subseteq \mathbb{F}_2^n$ be a binary linear code of length $n$, $\mathbb{F}$ a field and $\Lambda \subseteq \mathbb{F}$ be a collection of structure parameters
\[
\Lambda := \left\{ a_{i,\alpha}, b_{\alpha,\beta}, c_{i,\alpha} \in \mathbb{F}  : i = 1, \dots, n, \alpha, \beta \in C^* \right\}.
\]
The \emph{code algebra} $A_C(\Lambda)$ is the commutative algebra over $\mathbb{F}$ with basis
\[
\{ t_i : i = 1, \dots, n \} \cup \{ e^{\alpha} : \alpha \in C^* \},
\]
and multiplication given by
\begin{align*}
t_i \cdot t_j & = \delta_{i,j} t_i \\
t_i \cdot e^\alpha & = \begin{cases} 
a_{i,\alpha} \, e^\alpha & \text{if } \alpha_i = 1 \\
\mathrlap0\phantom{ \sum \limits_{i \in \supp(\alpha) }c_{i,\alpha} t_i} & \text{if } \alpha_i =0
\end{cases} \\
e^\alpha \cdot e^\beta & = \begin{cases}
b_{\alpha, \beta}\, e^{\alpha + \beta} & \text{if } \alpha \neq \beta, \beta^c \\
 \sum \limits_{i \in \supp(\alpha) }c_{i,\alpha} t_i & \text{if } \alpha = \beta  \\
0 & \text{if } \alpha = \beta^c
\end{cases}
\end{align*}
\end{definition}

We say that a code algebra $A_C$ is \emph{non-degenerate} if $\supp(C) = \{1, \dots, n\}$, $|C^*| >0$ and all the structure parameters in $\Lambda$ are non-zero.  In this paper, we will always assume code algebras are non-degenerate.  We will call the basis elements $t_i$ \emph{toral elements} and the $e^\alpha$ \emph{codewords elements}.

A code algebra $A_C$ has some obvious idempotents $t_i$.  We can also construct additional idempotents using the $s$-map construction.  We say that a code $D$ has \emph{constant weight} if all non-constant codewords have the same weight. That is, all codewords in $D^* = D \setminus \{\0, \1\}$ have the same weight. Suppose that $D$ is a linear subcode of $C$ of constant weight.  By \cite[Lemma 4.1]{codealgebras}, we know that for any $\beta \in D^*$, the number of pairs $(\alpha, \gamma) \in D^* \times D^*$ such that $\beta = \alpha + \gamma$ is 
\[
 l = 2|D^*| - |D|. 
\]
Define $t_D := \sum_{i \in \supp(D)} t_i$.

We now give a slightly altered version of the $s$-map construction that appears in \cite[Proposition 4.2]{codealgebras}.  We briefly reprove it while paying close attention to the characteristic of the field.  Since we assume the code algebra is non-degenerate, the structure constants are non-zero. However, if $\mathbb{F}$ has characteristic $p \neq 0$, then $d$, which is the weight of a codeword in $D^*$, or $l$, is zero in $\mathbb{F}$ when $p \mid d$, or $p \mid l$, respectively. Note also that we are only interested in solutions where $\mu \neq 0$, since otherwise we just have a sum of toral idempotents.

\begin{proposition}\label{smap}
Let $\mathbb{F}$ be a field of characteristic not $2$ and $C$ a binary linear code of length $n$.  Suppose that $D$ is a constant weight subcode of $C$, where the weight of a codeword in $D^*$ is $d$, and the structure parameters supported on $D^*$ are constant $(a,b,c)$. Take $v \in \mathbb{F}_{2}^{n}$ and $\mu, \lambda \in \mathbb{F}$ with $\mu \neq 0$. Then, there exists an idempotent in $A_C$ of the form
\[
s(D,v) := \lambda t_D + \mu \sum_{\alpha \in D^*}\left( -1\right) ^{\left( v ,\alpha \right) }e^{\alpha },
\]
if and only if
\begin{equation}\label{eq1}
1 = 2ad \lambda + bl \mu
\end{equation}
and either 
\begin{enumerate}
\item[(a)] $d$ is not zero in $\mathbb{F}$ and $\mu$ satisfies the equation
\[
\left[ b^2 l^2+2a^{2}cd^2(l+2)\right] \mu^{2} + 2b l (ad-1) \,\mu + 1-2ad=0
\]
\end{enumerate}
or,
\begin{enumerate}
\item[(b)]$l$ is not zero in $\mathbb{F}$ and $\lambda$ satisfies the equation
\[
2b^2 l^2\,\lambda^{2} - 2\left[ b^2l^2 + acd(l+2) \right] \lambda + c(l+2) =0.
\]
\end{enumerate}
\end{proposition}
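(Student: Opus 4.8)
The plan is to check idempotency $s(D,v)^2 = s(D,v)$ by brute force, expanding the square in the standard basis of $A_C$ and comparing coefficients. Set $t := t_D = \sum_{i \in \supp(D)} t_i$ and $e := \sum_{\alpha \in D^*} (-1)^{(v,\alpha)} e^\alpha$, so that $s(D,v) = \lambda t + \mu e$ and, by commutativity, $s(D,v)^2 = \lambda^2 t^2 + 2\lambda\mu\,(t\cdot e) + \mu^2 e^2$. Two of the three products are immediate: since $t_i \cdot t_j = \delta_{i,j} t_i$ we get $t^2 = t$; and for each $\alpha \in D^*$ we have $\supp(\alpha) \subseteq \supp(D)$, so $t \cdot e^\alpha = \sum_{i \in \supp(\alpha)} a_{i,\alpha} e^\alpha = a\lvert\alpha\rvert e^\alpha = ad\,e^\alpha$ using constant weight and the constant parameters $(a,b,c)$ on $D^*$, so that $t \cdot e = ad\,e$.

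The remaining product $e^2 = \sum_{\alpha,\beta \in D^*}(-1)^{(v,\alpha)+(v,\beta)} e^\alpha\cdot e^\beta$ is the heart of the computation, and I would split it along the three cases of Definition \ref{CodeAlgebra}. Pairs with $\alpha = \beta^c$ contribute $0$. For pairs with $\alpha \neq \beta,\beta^c$ one has $e^\alpha\cdot e^\beta = b\,e^{\alpha+\beta}$ with $\alpha+\beta \in D^*$, and $(-1)^{(v,\alpha)+(v,\beta)} = (-1)^{(v,\alpha+\beta)}$ by bilinearity of the inner product over $\mathbb{F}_2$; conversely every ordered pair of elements of $D^*$ summing to a fixed $\gamma \in D^*$ automatically has $\alpha \neq \beta,\beta^c$, so by \cite[Lemma 4.1]{codealgebras} there are exactly $l = 2\lvert D^*\rvert - \lvert D\rvert$ of them, and this part of the sum equals $bl\,e$. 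The diagonal pairs $\alpha=\beta$ contribute $\sum_{\alpha \in D^*} e^\alpha\cdot e^\alpha = c\sum_{\alpha \in D^*}\sum_{i\in\supp(\alpha)} t_i = c\sum_{i\in\supp(D)} N_i\, t_i$ with $N_i := \lvert\{\alpha \in D^*: \alpha_i = 1\}\rvert$. Here I would observe that $\{\alpha \in D : \alpha_i = 1\}$ is a coset of the index-two subcode $\{\alpha\in D:\alpha_i=0\}$ (index two since $i \in \supp(D)$), that this coset never contains $\0$ and contains $\1$ precisely when $\1\in D$, so $N_i = \tfrac12\lvert D\rvert$ if $\1\notin D$ and $N_i = \tfrac12\lvert D\rvert - 1$ if $\1\in D$; in both cases $N_i$ is independent of $i$ and, comparing with $l = 2\lvert D^*\rvert - \lvert D\rvert$, equals $\tfrac12(l+2)$. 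Hence $e^2 = \tfrac{c(l+2)}{2}\,t + bl\,e$.

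Assembling, $s(D,v)^2 = \bigl(\lambda^2 + \tfrac{c(l+2)}{2}\mu^2\bigr)t + \bigl(2ad\lambda\mu + bl\mu^2\bigr)e$. Since $t$ and $e$ are nonzero (as $D^* \neq \emptyset$) and are supported on disjoint parts of the basis, they are linearly independent, so $s(D,v)^2 = s(D,v)$ if and only if $2ad\lambda\mu + bl\mu^2 = \mu$ and $\lambda^2 + \tfrac{c(l+2)}{2}\mu^2 = \lambda$. As $\mu \neq 0$ and $\mathrm{char}(\mathbb{F})\neq 2$, the first is equivalent to $2ad\lambda + bl\mu = 1$, that is \eqref{eq1}, and the second to $2\lambda^2 - 2\lambda + c(l+2)\mu^2 = 0$. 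When $d \neq 0$ in $\mathbb{F}$ we have $2ad \neq 0$, so \eqref{eq1} gives $\lambda = (1-bl\mu)/(2ad)$; substituting into $2\lambda^2 - 2\lambda + c(l+2)\mu^2 = 0$ and clearing denominators (multiply by $2a^2d^2$) yields, after expansion, equation (a). When $l \neq 0$ in $\mathbb{F}$ we have $bl \neq 0$, so \eqref{eq1} gives $\mu = (1-2ad\lambda)/(bl)$; the analogous substitution (clearing by $b^2l^2$ and simplifying) yields equation (b). Conversely, any $(\lambda,\mu)$ with $\mu\neq 0$ solving \eqref{eq1} together with (a) (resp.\ (b)) satisfies both coefficient equations and so gives the idempotent $s(D,v)$.

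The only part that is not mechanical is identifying the $t$-coefficient of $e^2$, i.e.\ showing that every coordinate in $\supp(D)$ lies in exactly $\tfrac12(l+2)$ of the codewords in $D^*$; this rests on the index-two subcode observation together with the counting result $l = 2\lvert D^*\rvert - \lvert D\rvert$ of \cite[Lemma 4.1]{codealgebras}. Everything else — the two easy products, the linear-independence step, and the substitutions producing (a) and (b) — is routine manipulation with the multiplication table and with \eqref{eq1}; the hypothesis $\mathrm{char}(\mathbb{F})\neq 2$ enters only through the factors $\tfrac12$ and the division by $2ad$.
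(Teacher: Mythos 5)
Your overall route is the same as the paper's: derive the two coefficient equations $\lambda = \lambda^2 + \mu^2 c\tfrac{l+2}{2}$ and $\mu = 2ad\lambda\mu + bl\mu^2$ from $s(D,v)^2 = s(D,v)$, divide the second by $\mu$ to get the linear relation $1 = 2ad\lambda + bl\mu$, and then substitute. The only structural difference is that the paper imports those two equations from the proof of \cite[Proposition 4.2]{codealgebras}, whereas you derive them from scratch. Your added detail is sound: $t^2 = t$, $t\cdot e = ad\,e$, the reduction of the off-diagonal part of $e^2$ to $bl\,e$ via the count $l = 2|D^*|-|D|$, and in particular the index-two coset argument showing each $i\in\supp(D)$ lies in exactly $\tfrac{l+2}{2}$ codewords of $D^*$ (which is exactly the counting fact the paper invokes as $\tfrac{l+2}{2} = |D^*|\tfrac{d}{m}$). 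Your derivation of the quadratic in (a) also checks out against the displayed formula.

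The one genuine problem is the final assertion that ``the analogous substitution \dots yields equation (b)'': you do not carry it out, and in fact it does not yield the displayed equation. Substituting $\mu = (1-2ad\lambda)/(bl)$ into $\lambda - \lambda^2 = \mu^2 c\tfrac{l+2}{2}$ and clearing $2b^2l^2$ gives
\[
\left[2b^2l^2 + 4a^2cd^2(l+2)\right]\lambda^2 - 2\left[b^2l^2 + 2acd(l+2)\right]\lambda + c(l+2) = 0,
\]
which coincides with the quadratic displayed in (b) only when $d = 0$ in $\mathbb{F}$. When both $d$ and $l$ are nonzero the displayed (b) is inconsistent with (a): for instance with $a=b=c=1$, $d=2$, $l=4$ the coefficient equations force $64\lambda^2 - 40\lambda + 3 = 0$ (matching (a) and the corrected quadratic above), while the displayed (b) reads $16\lambda^2 - 28\lambda + 3 = 0$. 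The paper's own proof makes the same one-line claim, so this appears to be an error in the statement of (b) rather than in your method; but as written, the step ``the substitution yields (b)'' is false, and a proof that actually performs the substitution must either correct the displayed quadratic or restrict (b) to the case where $d$ vanishes in $\mathbb{F}$.
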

\begin{proof}
As in the proof of \cite[Proposition 4.2]{codealgebras}, the condition $s(D,v) = s(D,v)^2$ gives us two equations
\begin{align}
\lambda & = \lambda^2+\mu^2c \frac{l+2}{2}, \label{quad3} \\
\mu & =  2\lambda \mu ad + \mu^{2}b l.  \label{lin}
\end{align}
(This differs slightly from \cite[Proposition 4.2]{codealgebras}, as $\frac{l+2}{2} = |D^*|\frac{d}{m}$, where $m = |\supp(D)|$.)

Since $\mu \neq 0$, we may divide Equation \ref{lin} by $\mu$ to get Equation \ref{eq1}.  We see from this that at most one of $d$ and $l$ can be zero in $\mathbb{F}$.  If $d$ is not zero, then we may substitute for $\lambda$ in Equation \ref{quad3} and obtain the quadratic in (a).  Similarly, if $l$ is not zero in $\mathbb{F}$, we substitute for $\mu$ in Equation \ref{quad3} to obtain the quadratic in (b).
\end{proof}

It is clear that we may always extend the field $\mathbb{F}$ to $\mathbb{F}(r)$, where $r$ is a root of one of the above quadratics. Then, for the algebra over this extended field, $s(D,v)$ will exist.

Fix $\alpha \in C^*$.  The subcode spanned by $\alpha$, $D = \langle \alpha \rangle$, is a constant weight subcode. We assume that the structure parameters supported on $D^*$ are constant, namely $a_\alpha := a_{i, \alpha} = a_{j, \alpha}$ and $c_\alpha := c_{i, \alpha} = c_{j, \alpha}$ for $i \in \supp(\alpha)$. We have $l = 0$, so we must also assume that $d = |\alpha|$ is not zero in $\mathbb{F}$. So, by Proposition \ref{smap}, the following are idempotents
\[
e_{\pm} := \lambda t_\alpha \pm \mu e^{\alpha},
\]
where the equations for $\lambda$ and $\mu$ are simplified to
\[
\lambda := \frac{1}{2a_\alpha \vert \alpha \vert} \ \text{ and } \ \mu^2 : = \frac{\lambda - \lambda^2}{c_{\alpha}}.
\]
We call these \emph{small idempotents}.  In \cite{codealgebras}, their eigenvalues, eigenvectors and fusion law were calculated in the case where $C$ itself was a constant weight code.  This paper generalises those results to an arbitrary code $C$.


\section{Eigenvalues and eigenvectors}\label{sec:eigen}

In this section, we will calculate the eigenvalues and eigenvectors of a small idempotent $e_\pm$, show that they span the whole algebra and therefore that $e_\pm$ is semisimple.  Throughout this section we will fix a binary linear code $C$, a codeword $\alpha \in C^*$ and $\mathbb{F}$ will have characteristic $p \neq 2$ such that $p \nmid |\alpha|$ (we allow $p=0$). Let $e = e_+$ be the small idempotent defined by the $s$-map.  We begin by defining some notation.

\begin{notation}
Throughout the paper, we write statements involving $\1 \in C$, or the complement $\alpha^c$ of a codeword $\alpha$.  We do not assume that $\1 \in C$, or complements exist, just that if they do, then these statements should hold.
\end{notation}

For $\beta, \gamma \in C$, we use the notation $\beta \cap \gamma := \supp(\beta) \cap \supp(\gamma)$.

\begin{definition}
Given $\beta \in C^*$, we define the \emph{weight partition} to be the unordered pair
\[
p(\beta) :=\left( | \alpha \cap \beta|, | \alpha \cap (\alpha + \beta)| \right) = ( |\alpha \cap \beta|, | \alpha \cap \beta^c| )
\]
Note that $p(\beta) = p(\alpha+\beta) = p(\beta^c)$.  Let
\[
C_\alpha(p) := \{ \beta \in C^*\setminus \{\alpha, \alpha^c\} : p(\beta) = p \}
\]
be the set of all $\beta$ which give the weight partition $p$.  We define
\[
P_\alpha := \{ p(\beta) : \beta \in C^* \setminus \{ \alpha, \alpha^c\} \}
\]
to be the set of all weight partitions of $\alpha$.
\end{definition}

We make the following assumptions on the structure parameters:
\begin{align*}
a  &:= a_{i, \beta} && \mbox{for all } i \in \supp(\beta), \beta \in C^*\\
b_{\alpha, \beta} &= b_{\alpha, \gamma} && \mbox{for all } \beta,\gamma \in C_\alpha(p), p \in P_\alpha\\
c_\alpha &:= c_{i, \alpha} && \mbox{for all } i \in \supp(\alpha)
\end{align*}
In other words, the $a$ structure parameter is the same for the whole algebra, while the $c$ structure parameter depends on the codeword and the $b$ structure parameter for $\alpha$ depends on the weight set.

In order to give the eigenvectors, we first need to define some scalars which will be their coefficients.  For $\beta \in C^* \setminus \{ \alpha, \alpha^c \}$, we define
\[
\xi_{|\alpha\cap\beta|} := \frac{\lambda a}{2\mu b_{\alpha, \beta}}(|\alpha| - 2|\alpha \cap \beta|) = \frac{1 }{4 \mu b_{\alpha, \beta}} \left(1 - \frac{2 \vert \alpha \cap \beta\vert}{\vert \alpha \vert } \right)
\]
and let $\theta^\beta_\pm$ be the two roots of
\[
x^2 + 2\xi_\beta x -1 = 0.
\]
If $\mathbb{F}$ do not contain these roots, we replace $\mathbb{F}$ by $\mathbb{F}(\theta^\beta_\pm)$.

Where $\alpha$ is understood, to simplify notation, we will write $\xi_\beta := \xi_{|\alpha \cap \beta|}$, $\theta^\beta_\pm := \theta^{|\alpha \cap \beta|}_\pm$.  We observe that $\xi_{|\alpha\cap\beta|}$ depends only on the size of the intersection of $\beta$ with $\alpha$, not on the codeword $\beta$ itself.  

\begin{lemma}\label{coefsubs}
Let $\beta, \gamma, \delta \in C^* \setminus \{ \alpha, \alpha^c \}$ such that $|\alpha \cap \beta| = |\alpha \cap \gamma|$ and $| \alpha \cap \delta| = |\alpha| - |\alpha \cap \beta|$.
\begin{enumerate}
\item[$1.$] $\xi_\beta= \xi_\gamma$
\item[$2.$] $\xi_{\delta}=  -\xi_{\beta}$, in particular $\xi_{\alpha+\beta} = \xi_{\beta^c} = -\xi_\beta$
\item[$3.$] $\theta^\beta_\pm = \theta^\gamma_\pm$
\item[$4.$] $\theta^{\delta}_\pm = -\theta^{\beta}_\mp$, in particular $\theta^{\alpha + \beta}_\pm = \theta^{\beta^c}_\pm  = -\theta^\beta_\mp$
\item[$5.$] $\theta^\beta_+  + \theta^\beta_- = -2\xi_\beta$ and $\theta^\beta_+ \theta^\beta_- = -1$
\item[$6.$] $\frac{1}{\theta^\beta_\pm} = - \theta^\beta_\mp$
\item[$7.$] If $\mathbb{F}$ has characteristic $0$, then
\[
\theta^\beta_\pm =  - \xi_\beta \pm \sqrt{ (\xi_\beta)^2 + 1 }
\]
\end{enumerate}
\end{lemma}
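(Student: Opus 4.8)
The plan is to verify each of the seven identities directly from the definitions of $\xi_{|\alpha\cap\beta|}$ and the quadratic $x^2 + 2\xi_\beta x - 1 = 0$, organising the work so that the later parts follow formally from the earlier ones. Parts 1 and 2 are purely about the scalars $\xi$: part 1 is immediate from the observation (already noted in the text) that $\xi_{|\alpha\cap\beta|}$ depends only on $|\alpha\cap\beta|$ and on $b_{\alpha,\beta}$, together with the standing assumption $b_{\alpha,\beta} = b_{\alpha,\gamma}$ for $\beta,\gamma\in C_\alpha(p)$; for part 2 I would compute $\xi_\delta$ using $|\alpha\cap\delta| = |\alpha| - |\alpha\cap\beta|$ in the formula $\xi_{|\alpha\cap\beta|} = \frac{1}{4\mu b_{\alpha,\beta}}\left(1 - \frac{2|\alpha\cap\beta|}{|\alpha|}\right)$, noting that $1 - \frac{2(|\alpha|-|\alpha\cap\beta|)}{|\alpha|} = -\left(1 - \frac{2|\alpha\cap\beta|}{|\alpha|}\right)$, so the sign flips; the special case $\xi_{\alpha+\beta} = \xi_{\beta^c} = -\xi_\beta$ follows since $|\alpha \cap (\alpha+\beta)| = |\alpha| - |\alpha\cap\beta|$ (this is exactly the content of the weight partition $p(\beta) = (|\alpha\cap\beta|, |\alpha\cap\beta^c|)$). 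Here I should be slightly careful that $b_{\alpha,\beta}$ and $b_{\alpha,\delta}$ agree; since $p(\delta) = p(\beta^c) = p(\beta)$ this is again covered by the standing assumption, and I would spell this out.

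Parts 3 through 7 concern the roots $\theta^\beta_\pm$. Part 3 is immediate from part 1, since the defining quadratic $x^2 + 2\xi_\beta x - 1 = 0$ depends on $\beta$ only through $\xi_\beta$. Part 5 is Vieta's formulas for this monic quadratic: the sum of roots is $-2\xi_\beta$ and the product is $-1$; I would just state this, with the caveat about labelling of $\theta^\beta_+$ versus $\theta^\beta_-$ (which is a choice, so ``$\theta^\beta_+$'' should be read as a consistent selection of one root). Part 6 follows from part 5: since $\theta^\beta_+\theta^\beta_- = -1$, we get $\frac{1}{\theta^\beta_\pm} = -\theta^\beta_\mp$ (and in particular the roots are nonzero). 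For part 4, I would substitute $x \mapsto -x$ into the quadratic for $\theta^\delta$: since $\xi_\delta = -\xi_\beta$ by part 2, the polynomial $x^2 + 2\xi_\delta x - 1 = x^2 - 2\xi_\beta x - 1$ has roots $x$ exactly when $-x$ satisfies $x^2 + 2\xi_\beta x - 1 = 0$, i.e. the roots of the $\delta$-quadratic are the negatives of the roots of the $\beta$-quadratic; matching up the $\pm$ labels via part 5 (the root that is ``$+\sqrt{\phantom{x}}$-like'' for $\delta$ corresponds to the ``$-\sqrt{\phantom{x}}$-like'' one for $\beta$ after negation) gives $\theta^\delta_\pm = -\theta^\beta_\mp$; then specialise to $\delta = \alpha+\beta$ or $\delta = \beta^c$. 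Finally part 7 is the quadratic formula applied to $x^2 + 2\xi_\beta x - 1 = 0$ in characteristic $0$ (where $2$ is invertible and the discriminant $4\xi_\beta^2 + 4$ makes sense): $\theta^\beta_\pm = -\xi_\beta \pm \sqrt{\xi_\beta^2 + 1}$, with the $\pm$ here fixing the convention used for the labels throughout.

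The only genuine subtlety — and the one point I would treat carefully rather than wave at — is the consistency of the $\pm$ labelling across parts 3--7. The roots $\theta^\beta_\pm$ are a priori just an unordered pair, so identities like $\theta^\delta_\pm = -\theta^\beta_\mp$ and $\frac{1}{\theta^\beta_\pm} = -\theta^\beta_\mp$ only make sense once we fix, for each value of $|\alpha\cap\beta|$, which root is called $+$ and which is called $-$; in characteristic $0$ part 7 gives the natural convention, and in positive characteristic one simply fixes an arbitrary labelling for each intersection size and then reads off the (sign-)compatible labelling for the complementary size. I expect essentially no computational obstacle here — everything reduces to Vieta and a linear change of variable — so the ``hard part'' is really just the bookkeeping of which root is which, and I would state the labelling convention explicitly at the start of the proof so that parts 4 and 6 are unambiguous.
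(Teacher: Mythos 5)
Your proposal is correct and follows essentially the same route as the paper, which simply observes that parts 1--5 follow from the assumptions on the $b$ structure parameters together with $\alpha\cap\beta^c = \alpha\cap(\alpha+\beta)$, that part 6 follows from part 5, and that part 7 comes from solving the quadratic. Your expansion via Vieta's formulas and the substitution $x\mapsto -x$, and your explicit attention to fixing a consistent $\pm$ labelling of the roots (which the paper leaves implicit), are faithful elaborations of the same argument.
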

\begin{proof}
By our assumptions on the $b$ structure parameters and using the observation that $\alpha \cap \beta^c = \alpha \cap (\alpha + \beta)$, the first five parts are clear.  The sixth follows from the fifth and the seventh follows from solving the quadratic.
\end{proof}

We also note the following result which will be useful, particularly in positive characteristic.

\begin{lemma}\label{xi0}
We have that $\theta^\beta_+ = -\theta^\beta_-$ if and only if $\theta^\beta_+ = \pm1$ if and only if $\xi_\beta = 0$ if and only if $|\alpha| - 2|\alpha \cap \beta|$ is zero in $\mathbb{F}$.  In characteristic $0$, this only happens when $|\alpha \cap \beta| = \frac{|\alpha|}{2}$.
\end{lemma}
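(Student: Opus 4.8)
The plan is to chase the stated chain of equivalences using only the two symmetric-function identities for the roots recorded in Lemma \ref{coefsubs}(5), namely $\theta^\beta_+ + \theta^\beta_- = -2\xi_\beta$ and $\theta^\beta_+ \theta^\beta_- = -1$, together with the non-degeneracy hypothesis on the structure parameters and the standing assumptions $\mathrm{char}(\mathbb{F}) = p \neq 2$ and $p \nmid |\alpha|$.

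First I would establish $\theta^\beta_+ = -\theta^\beta_- \iff \xi_\beta = 0$: adding the roots gives $\theta^\beta_+ + \theta^\beta_- = -2\xi_\beta$, and since $\mathrm{char}(\mathbb{F}) \neq 2$, the right-hand side vanishes precisely when $\xi_\beta = 0$. Next, $\theta^\beta_+ = -\theta^\beta_- \iff \theta^\beta_+ = \pm 1$: if $\theta^\beta_+ = -\theta^\beta_-$, then $-1 = \theta^\beta_+ \theta^\beta_- = -(\theta^\beta_+)^2$, so $(\theta^\beta_+)^2 = 1$ and $\theta^\beta_+ = \pm 1$; conversely, if $\theta^\beta_+ = \pm 1$, then from $\theta^\beta_+ \theta^\beta_- = -1$ we get $\theta^\beta_- = \mp 1 = -\theta^\beta_+$.

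For $\xi_\beta = 0 \iff |\alpha| - 2|\alpha \cap \beta|$ is zero in $\mathbb{F}$, I would use the explicit formula $\xi_\beta = \frac{\lambda a}{2\mu b_{\alpha,\beta}}(|\alpha| - 2|\alpha \cap \beta|)$. By non-degeneracy, $a$ and $b_{\alpha,\beta}$ are non-zero; $\mu \neq 0$ by construction of the small idempotent; and $\lambda = \frac{1}{2a|\alpha|} \neq 0$ since $p \nmid |\alpha|$. As $\mathrm{char}(\mathbb{F}) \neq 2$, the scalar $\frac{\lambda a}{2\mu b_{\alpha,\beta}}$ is a unit, so $\xi_\beta = 0$ iff the integer $|\alpha| - 2|\alpha \cap \beta|$ maps to $0$ in $\mathbb{F}$. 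Finally, in characteristic $0$ the canonical map $\mathbb{Z} \to \mathbb{F}$ is injective, so $|\alpha| - 2|\alpha \cap \beta| = 0$ in $\mathbb{F}$ forces $|\alpha| = 2|\alpha \cap \beta|$ as integers, i.e.\ $|\alpha \cap \beta| = \frac{|\alpha|}{2}$.

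I do not expect a serious obstacle: the statement is essentially bookkeeping once Lemma \ref{coefsubs}(5) and the formula for $\xi_\beta$ are in hand. The only point requiring care is positive characteristic — the equivalence is phrased with ``$|\alpha| - 2|\alpha \cap \beta|$ is zero in $\mathbb{F}$'' rather than with ``$|\alpha \cap \beta| = \frac{|\alpha|}{2}$'', because $p$ may divide $|\alpha| - 2|\alpha \cap \beta|$ without that integer vanishing, and one must keep track that every quantity being inverted ($2$, $a$, $b_{\alpha,\beta}$, $\mu$, $|\alpha|$) is genuinely non-zero in $\mathbb{F}$, which is exactly what the hypotheses $p \neq 2$, $p \nmid |\alpha|$ and non-degeneracy guarantee.
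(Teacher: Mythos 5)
Your proof is correct, and it is exactly the argument the paper leaves implicit: the lemma is stated without proof, being an immediate consequence of Lemma \ref{coefsubs}(5) (the Vieta relations $\theta^\beta_+ + \theta^\beta_- = -2\xi_\beta$, $\theta^\beta_+\theta^\beta_- = -1$) and the explicit formula for $\xi_\beta$, with the non-degeneracy and characteristic hypotheses ensuring the scalar $\frac{\lambda a}{2\mu b_{\alpha,\beta}}$ is a unit. Your attention to the positive-characteristic phrasing is precisely the point the authors flag when they say the lemma is ``useful, particularly in positive characteristic.''
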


For $p \in P_\alpha$, $\beta \in P_\alpha(p)$, we define
\[
\nu^p_\pm := \frac{1}{4} + \mu b_{\alpha, \beta} (\theta^\beta_\pm + \xi_\beta)
\]
which will turn out to be an eigenvalue.  We note that $\nu^p_\pm$ is well-defined.  Indeed, parts one and three of Lemma \ref{coefsubs} show that $\theta^\beta_\pm + \xi_\beta = \theta^\gamma_\pm + \xi_\gamma$ if $|\alpha \cap \beta| = |\alpha \cap \gamma|$.  If $| \alpha \cap \delta| = |\alpha| - |\alpha \cap \beta|$, then $\theta^\delta_\pm + \xi_\delta = -\theta^\beta_\mp - \xi_\beta = \theta^\beta_\pm + 2\xi_\beta - \xi_\beta = \theta^\beta_\pm + \xi_\beta$ by parts two, four and five. So, by our assumptions on $b_{\alpha, \beta}$, $\nu^p_\pm$ is constant for all $\beta \in C_\alpha(p)$.

Let $p \in P_\alpha$ be a weight partition and $\beta \in C_\alpha(p)$.  We define
\[
w_\pm^\beta:= \theta^\beta_\pm e^\beta + e^{\alpha + \beta}
\]
which will be an eigenvector for $\nu^p_\pm$.

\begin{lemma}\label{nuspace}
Let $p \in P_\alpha$ and $\beta \in C_\alpha(p)$.  Then,
\[
w^{\alpha + \beta}_\pm = -\theta^\beta_\mp w^\beta_\pm
\]
\end{lemma}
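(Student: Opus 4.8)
The plan is to verify the claimed identity $w^{\alpha+\beta}_\pm = -\theta^\beta_\mp w^\beta_\pm$ by a direct substitution, unwinding the definition of $w^\gamma_\pm$ and using the relations among the $\theta$'s recorded in Lemma \ref{coefsubs}. Concretely, applying the definition to $\gamma = \alpha + \beta$ gives
\[
w^{\alpha+\beta}_\pm = \theta^{\alpha+\beta}_\pm e^{\alpha+\beta} + e^{\alpha + (\alpha+\beta)} = \theta^{\alpha+\beta}_\pm e^{\alpha+\beta} + e^\beta,
\]
since $\alpha + (\alpha+\beta) = \beta$ in $\mathbb{F}_2^n$. The first thing I would check is that $\alpha + \beta$ genuinely lies in $C_\alpha(p)$, so that $w^{\alpha+\beta}_\pm$ is defined: this is immediate because $p(\alpha+\beta) = p(\beta) = p$ by the remark following the definition of the weight partition, and $\alpha+\beta \notin \{\alpha, \alpha^c\}$ since $\beta \notin \{\0, \alpha^c\}$ — note $\beta \neq \0$ because $\beta \in C^*$, and $\beta \neq \alpha^c$ by hypothesis.

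Next I would invoke part 4 of Lemma \ref{coefsubs}, which says $\theta^{\alpha+\beta}_\pm = -\theta^\beta_\mp$. Substituting this into the expression above yields
\[
w^{\alpha+\beta}_\pm = -\theta^\beta_\mp e^{\alpha+\beta} + e^\beta.
\]
Now I factor out $-\theta^\beta_\mp$: we get $w^{\alpha+\beta}_\pm = -\theta^\beta_\mp\bigl(e^{\alpha+\beta} - (\theta^\beta_\mp)^{-1} e^\beta\bigr)$. By part 6 of Lemma \ref{coefsubs}, $(\theta^\beta_\mp)^{-1} = -\theta^\beta_\pm$, so $-(\theta^\beta_\mp)^{-1} = \theta^\beta_\pm$, giving
\[
w^{\alpha+\beta}_\pm = -\theta^\beta_\mp\bigl(\theta^\beta_\pm e^\beta + e^{\alpha+\beta}\bigr) = -\theta^\beta_\mp\, w^\beta_\pm,
\]
which is exactly the assertion. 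This also implicitly uses that $\theta^\beta_\mp \neq 0$ (so that division is legitimate, or rather so that the factoring step is reversible), which follows from $\theta^\beta_+\theta^\beta_- = -1 \neq 0$ in part 5.

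There is essentially no obstacle here — the statement is a one-line bookkeeping consequence of the symmetry relations in Lemma \ref{coefsubs} — so the only thing requiring a modicum of care is getting the $\pm/\mp$ subscripts to match up consistently throughout, since a sign or index slip would produce $-\theta^\beta_\pm w^\beta_\mp$ instead. I would double-check the two ingredients: $\theta^{\alpha+\beta}_\pm = -\theta^\beta_\mp$ (part 4, flipping the sign) and $1/\theta^\beta_\pm = -\theta^\beta_\mp$ (part 6, also flipping the sign), making sure the net effect is that the two subscript-swaps cancel and we recover $w^\beta_\pm$ on the right with the same sign as on the left. One could alternatively present the computation in the reverse direction — starting from $-\theta^\beta_\mp w^\beta_\pm$ and expanding — whichever reads more cleanly; either way the whole proof is three or four lines.
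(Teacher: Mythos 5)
Your proof is correct and follows essentially the same route as the paper's: expand the definition of $w^{\alpha+\beta}_\pm$, apply part 4 of Lemma \ref{coefsubs} to rewrite $\theta^{\alpha+\beta}_\pm$ as $-\theta^\beta_\mp$, then factor and use part 6 to recover $w^\beta_\pm$. The additional check that $\alpha+\beta$ lies in $C_\alpha(p)$ is a sensible extra detail the paper leaves implicit.
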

\begin{proof}
By Lemma \ref{coefsubs}, we have
\begin{align*}
w^{\alpha+\beta}_\pm & = \theta^{\alpha + \beta}_\pm e^{\alpha+\beta} + e^\beta \\
&= -\theta^\beta_\mp e^{\alpha + \beta} + e^\beta \\
&= - \theta^\beta_\mp(-\tfrac{1}{\theta^\beta_\mp}e^\beta + e^{\alpha+\beta}) = -\theta^\beta_\mp w^\beta_\pm \qedhere
\end{align*}
\end{proof}

Since $\beta$ and $\alpha+\beta$ define the same eigenvector up to scaling, we pick a subset $C_\alpha'(p)$ of $C_\alpha(p)$ such that for every $\beta \in C_\alpha(p)$, either $\beta \in C_\alpha'(p)$, or $\alpha + \beta \in C_\alpha'(p)$, but not both.  We may now list the eigenvectors for $e$ and show that they form a basis for their eigenspaces.

From now on, we assume that the field $\mathbb{F}$ over which $A_C$ is defined contains the roots of $x^2 + 2\xi_\beta x -1 = 0$, for all $\beta \in C^* \setminus \{ \alpha , \alpha^c\}$.  In other words, the $\theta^\beta_\pm$ are defined.

\begin{proposition}\label{espaces}
Suppose $a \neq \frac{1}{2 \vert \alpha \vert}, \frac{1}{3 \vert \alpha \vert}$ and $\theta^\beta_+ \neq \theta^\beta_-$ for all $\beta \in C^* \setminus \{ \alpha, \alpha^c\}$.  The sets of eigenvectors for $e = e_+$ given in Table $\ref{tab:esp}$ are a basis for their eigenspace.  Moreover, $A$ decomposes as a direct sum of these eigenspaces, hence $e$ is semisimple. It is primitive if $\nu^p_\pm \neq 1$ for all $p \in P_\alpha$.

\begin{table}[!htb]
\setlength{\tabcolsep}{4pt}
\renewcommand{\arraystretch}{1.5}
\centering
\begin{tabular}{l|l@{\hskip 15pt}l}
Eigenvalue & Eigenvector \\
\hline
$1$ & $e = \lambda t_\alpha + \mu e^\alpha$  \\
\hline
\multirow{2}{4em}{$0$} &  $t_i$ & for $i \not \in \supp(\alpha)$ \\
& $e^{\alpha^c}$ \\
\hline
$\lambda$ & $t_j - t_k$ & for $k \in \supp(\alpha)$, $k \neq j$ \\
\hline
$\lambda - \tfrac{1}{2}$ & $2\mu c_\alpha t_\alpha - e^\alpha$ \\
\hline
$\nu^p_\pm$ & $w_\pm^\beta = \theta^\beta_\pm e^\beta + e^{\alpha + \beta}$ & for $\beta \in C_\alpha'(p)$, $p \in P_\alpha$
\end{tabular}
\vspace{4pt}
\\where $j \in \supp(\alpha)$ is fixed
\caption{Eigenspaces for small idempotents}\label{tab:esp}
\end{table}
\end{proposition}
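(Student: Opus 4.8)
The plan is to verify directly that each vector listed in Table~\ref{tab:esp} is an eigenvector of $\ad_e$ with the stated eigenvalue, then to count the vectors and prove they are linearly independent, from which semisimplicity, the eigenspace decomposition, and primitivity all follow by a dimension argument.

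\textbf{Step 1: the vectors are eigenvectors.} Using the products of Definition~\ref{CodeAlgebra} together with the identities $\lambda a |\alpha| = \tfrac12$ and $\mu^2 c_\alpha = \lambda - \lambda^2$ coming from the $s$-map construction (Proposition~\ref{smap}), I would compute $e \cdot x$ for each candidate $x$. For $t_i$ with $i \notin \supp(\alpha)$ and for $e^{\alpha^c}$ both $t_\alpha$ and $e^\alpha$ annihilate $x$, giving eigenvalue $0$; for $t_j - t_k$ one has $e \cdot t_j = \lambda t_j + \mu a e^\alpha$, so the $e^\alpha$-terms cancel in the difference and the eigenvalue is $\lambda$. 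For $2\mu c_\alpha t_\alpha - e^\alpha$ one computes $e \cdot t_\alpha = \lambda t_\alpha + \tfrac{\mu}{2\lambda}e^\alpha$ and $e \cdot e^\alpha = \tfrac12 e^\alpha + \mu c_\alpha t_\alpha$, and after substituting $\mu^2 c_\alpha = \lambda - \lambda^2$ the combination is an eigenvector of eigenvalue $\lambda - \tfrac12$. The substantial case is $w^\beta_\pm = \theta^\beta_\pm e^\beta + e^{\alpha+\beta}$: from $e \cdot e^\beta = \lambda a |\alpha \cap \beta| e^\beta + \mu b_{\alpha,\beta} e^{\alpha+\beta}$ and the analogous identity for $e \cdot e^{\alpha+\beta}$ (using $b_{\alpha, \alpha+\beta} = b_{\alpha,\beta}$ since $p(\alpha+\beta) = p(\beta)$, and $|\alpha \cap (\alpha+\beta)| = |\alpha| - |\alpha\cap\beta|$), the eigenvalue equation $e \cdot w^\beta_\pm = \nu\, w^\beta_\pm$ becomes a $2 \times 2$ linear system in the coordinates $e^\beta, e^{\alpha+\beta}$. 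Eliminating $\nu$ and dividing by $\mu b_{\alpha,\beta} \neq 0$ reduces it precisely to $x^2 + 2\xi_\beta x - 1 = 0$, the defining equation of $\theta^\beta_\pm$; back-substituting and using $\mu b_{\alpha,\beta} \xi_\beta = \tfrac14 - \lambda a |\alpha \cap \beta|$ gives $\nu = \tfrac14 + \mu b_{\alpha,\beta}(\theta^\beta_\pm + \xi_\beta) = \nu^p_\pm$. I expect this last computation to be the main obstacle: it is routine, but it requires careful bookkeeping with $\lambda, \mu, a, b_{\alpha,\beta}, c_\alpha$ and $\xi_\beta$ and matching the recombined coefficients against the definition of $\nu^p_\pm$.

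\textbf{Step 2: counting and independence.} Set $\epsilon = 1$ if $\1 \in C$ and $\epsilon = 0$ otherwise, so there are $|C^*| - 1 - \epsilon$ codewords in $C^* \setminus \{\alpha, \alpha^c\}$; they fall into pairs $\{\beta, \alpha+\beta\}$ (one checks $\alpha+\beta \in C^* \setminus \{\alpha,\alpha^c\}$ and $\alpha+\beta \neq \beta$), and by Lemma~\ref{nuspace} each pair contributes exactly the two eigenvectors $w^\beta_+, w^\beta_-$. Summing the rows of Table~\ref{tab:esp} gives $1 + (n - |\alpha| + \epsilon) + (|\alpha| - 1) + 1 + (|C^*| - 1 - \epsilon) = n + |C^*| = \dim A$. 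For independence, partition the standard basis of $A$ into the blocks $\{t_i : i \in \supp(\alpha)\} \cup \{e^\alpha\}$, $\{t_i : i \notin \supp(\alpha)\}$, $\{e^{\alpha^c}\}$ (when $\1 \in C$), and, for each pair, $\{e^\beta, e^{\alpha+\beta}\}$; the listed vectors respect this partition, so it suffices to check independence block by block. The middle two blocks are trivial; within a two-element block $\{e^\beta, e^{\alpha+\beta}\}$ the vectors $w^\beta_+, w^\beta_-$ are independent because $\theta^\beta_+ \neq \theta^\beta_-$; and in the block $\{t_i : i \in \supp(\alpha)\} \cup \{e^\alpha\}$ the $|\alpha| - 1$ vectors $t_j - t_k$ span the zero-sum hyperplane of $\langle t_i : i \in \supp(\alpha)\rangle$, while $e = \lambda t_\alpha + \mu e^\alpha$ and $2\mu c_\alpha t_\alpha - e^\alpha$ span a complementary plane, since $\det\!\begin{pmatrix}\lambda & \mu \\ 2\mu c_\alpha & -1\end{pmatrix} = -\lambda - 2\mu^2 c_\alpha = -\lambda(3 - 2\lambda) \neq 0$, using $\lambda \neq 0$ (automatic) and $\lambda \neq \tfrac32$, i.e.\ $a \neq \tfrac{1}{3|\alpha|}$.

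\textbf{Step 3: conclusion.} Steps 1 and 2 produce $\dim A$ linearly independent eigenvectors of $\ad_e$, so they form a basis of $A$; hence $\ad_e$ is diagonalisable and $e$ is semisimple, and grouping the basis by eigenvalue yields the direct-sum decomposition $A = \bigoplus_\gamma A_\gamma(e)$. If $W_\gamma$ denotes the span of the listed $\gamma$-eigenvectors, then $\bigoplus_\gamma W_\gamma = A$ and $W_\gamma \subseteq A_\gamma(e)$ with the $A_\gamma(e)$ in direct sum, so a dimension count forces $W_\gamma = A_\gamma(e)$; thus the listed vectors are a basis of each eigenspace. Finally, the only listed eigenvector whose eigenvalue equals $1$ is $e$ itself, provided $\lambda \neq 1$ (automatic, as $\mu \neq 0$ forces this, equivalently $a \neq \tfrac{1}{2|\alpha|}$), $\lambda - \tfrac12 \neq 1$ (i.e.\ $a \neq \tfrac{1}{3|\alpha|}$), and $\nu^p_\pm \neq 1$ for all $p \in P_\alpha$; under these hypotheses $A_1(e) = \langle e\rangle$, so $e$ is primitive.
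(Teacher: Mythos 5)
Your proposal is correct and follows essentially the same route as the paper: direct verification that each listed vector is an eigenvector (with the $w^\beta_\pm$ case reducing to the quadratic $x^2 + 2\xi_\beta x - 1 = 0$ exactly as in the paper), a count giving $\dim A$, and a linear-independence argument whose block structure and key nonvanishing condition $-\lambda(3-2\lambda) \neq 0$ coincide with the paper's determinant computation of the matrix $M$. The only cosmetic differences are that you treat the cases $\1 \in C$ and $\1 \notin C$ uniformly via the parameter $\epsilon$ and phrase the independence argument block-by-block rather than as a single determinant.
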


Note that $\theta^\beta_+ \neq \theta^\beta_-$ is equivalent to the quadratic $x^2 + 2\xi_\beta x -1=0$ not having a repeated root. This proposition will be proven via the two following lemmas.

\begin{lemma}
The vectors listed in Table $\ref{tab:esp}$ are eigenvectors for the given eigenvalues.
\end{lemma}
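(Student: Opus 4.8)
The plan is to verify directly that each vector in Table~\ref{tab:esp} is an eigenvector for $e = e_+ = \lambda t_\alpha + \mu e^\alpha$ with the claimed eigenvalue, by computing $e \cdot v$ using only the multiplication rules of Definition~\ref{CodeAlgebra}, the defining relations $\lambda = \frac{1}{2a|\alpha|}$ and $\mu^2 c_\alpha = \lambda - \lambda^2$, and the definitions of the scalars $\xi_\beta$, $\theta^\beta_\pm$, $\nu^p_\pm$. I would organise this row by row. The easy cases come first: $e \cdot e = e$ holds since $e$ is an idempotent (this is Proposition~\ref{smap}), giving the $1$-eigenvector. For $i \notin \supp(\alpha)$, both $t_i \cdot t_\alpha = 0$ and $t_i \cdot e^\alpha = 0$ (as $\alpha_i = 0$), so $e \cdot t_i = 0$; and $e^\alpha \cdot e^{\alpha^c} = 0$ together with $t_\alpha \cdot e^{\alpha^c} = 0$ (since $\supp(\alpha) \cap \supp(\alpha^c) = \emptyset$) gives $e \cdot e^{\alpha^c} = 0$. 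For $t_j - t_k$ with $j,k \in \supp(\alpha)$: $t_\alpha \cdot (t_j - t_k) = t_j - t_k$ while $e^\alpha \cdot (t_j - t_k) = a_\alpha e^\alpha - a_\alpha e^\alpha = 0$, so $e \cdot (t_j - t_k) = \lambda(t_j - t_k)$. For $2\mu c_\alpha t_\alpha - e^\alpha$: using $t_\alpha \cdot t_\alpha = t_\alpha$, $e^\alpha \cdot e^\alpha = c_\alpha t_\alpha$, $e^\alpha \cdot t_\alpha = a|\alpha| \cdot \frac{1}{a|\alpha|}\cdot\frac12 \cdots$ — more carefully, $t_\alpha \cdot e^\alpha = a e^\alpha$ (summing $a_{i,\alpha}$ over $|\alpha|$ terms, but note $t_\alpha \cdot e^\alpha = \sum_{i\in\supp(\alpha)} a\, e^\alpha$, so one must be careful: $t_i \cdot e^\alpha = a e^\alpha$ for each $i$, hence $t_\alpha \cdot e^\alpha = |\alpha| a\, e^\alpha = \frac{1}{2\lambda} e^\alpha$). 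Then $e \cdot (2\mu c_\alpha t_\alpha - e^\alpha) = 2\mu c_\alpha \lambda t_\alpha + 2\mu^2 c_\alpha \cdot\frac{1}{2\lambda}\cdot\lambda\, \cdots$; collecting terms and substituting $\mu^2 c_\alpha = \lambda - \lambda^2$ should yield $(\lambda - \tfrac12)(2\mu c_\alpha t_\alpha - e^\alpha)$.

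The substantive row is the last one. Fix $p \in P_\alpha$ and $\beta \in C_\alpha'(p)$. I would compute $e \cdot w_\pm^\beta = \lambda t_\alpha \cdot (\theta^\beta_\pm e^\beta + e^{\alpha+\beta}) + \mu e^\alpha \cdot (\theta^\beta_\pm e^\beta + e^{\alpha+\beta})$. The key observation is that $t_\alpha \cdot e^\beta = |\alpha \cap \beta|\, a\, e^\beta$ and $t_\alpha \cdot e^{\alpha+\beta} = |\alpha \cap (\alpha+\beta)|\, a\, e^{\alpha+\beta} = (|\alpha| - |\alpha\cap\beta|)\, a\, e^{\alpha+\beta}$, since $t_i \cdot e^\gamma$ contributes $a\, e^\gamma$ exactly when $i \in \supp(\gamma)$. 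Meanwhile $e^\alpha \cdot e^\beta = b_{\alpha,\beta} e^{\alpha+\beta}$ and $e^\alpha \cdot e^{\alpha+\beta} = b_{\alpha,\alpha+\beta} e^\beta$ (using $\alpha + (\alpha+\beta) = \beta$, and noting these products are nonzero because $\beta \neq \alpha, \alpha^c$). So $e \cdot w_\pm^\beta$ is a linear combination of $e^\beta$ and $e^{\alpha+\beta}$; writing it as $x\, e^\beta + y\, e^{\alpha+\beta}$, the claim is that $(x, y) = \nu^p_\pm (\theta^\beta_\pm, 1)$, i.e.\ $y = \nu^p_\pm$ and $x = \nu^p_\pm \theta^\beta_\pm$. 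Expanding, $y = \lambda |\alpha\cap\beta| a\, \theta^\beta_\pm + \mu b_{\alpha,\beta}$ and $x = \lambda(|\alpha| - |\alpha\cap\beta|)a + \mu b_{\alpha,\alpha+\beta}\theta^\beta_\pm$; one then uses $\lambda a = \frac{1}{2|\alpha|}$, the definition $\xi_\beta = \frac{\lambda a}{2\mu b_{\alpha,\beta}}(|\alpha| - 2|\alpha\cap\beta|)$, the identity $b_{\alpha,\alpha+\beta} = b_{\alpha,\beta}$ (since $\alpha+\beta \in C_\alpha(p)$, by our assumption on the $b$ parameters), and the quadratic $(\theta^\beta_\pm)^2 + 2\xi_\beta\theta^\beta_\pm - 1 = 0$ to check $x = \theta^\beta_\pm y$. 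The upshot should be $y = \tfrac14 + \mu b_{\alpha,\beta}(\theta^\beta_\pm + \xi_\beta) = \nu^p_\pm$ after routine algebra: $\lambda |\alpha\cap\beta| a\, \theta^\beta_\pm = \frac{|\alpha\cap\beta|}{2|\alpha|}\theta^\beta_\pm$ and one rewrites $\frac{|\alpha\cap\beta|}{2|\alpha|} = \frac14 - \frac{\xi_\beta \mu b_{\alpha,\beta}}{\theta^\beta_\pm + \cdots}$ — here the quadratic relation $\theta^\beta_\pm - \tfrac{1}{\theta^\beta_\pm}$ hmm, rather $\frac12(1 - \frac{2|\alpha\cap\beta|}{|\alpha|}) = 2\mu b_{\alpha,\beta}\xi_\beta$, so $\frac{|\alpha\cap\beta|}{|\alpha|} = \frac12 - 2\mu b_{\alpha,\beta}\xi_\beta$, and $\lambda|\alpha\cap\beta|a\,\theta^\beta_\pm = \frac{\theta^\beta_\pm}{2}(\frac12 - 2\mu b_{\alpha,\beta}\xi_\beta) = \frac{\theta^\beta_\pm}{4} - \mu b_{\alpha,\beta}\xi_\beta\theta^\beta_\pm$; adding $\mu b_{\alpha,\beta}$ and comparing with $\theta^\beta_\pm\nu^p_\pm = \frac{\theta^\beta_\pm}{4} + \mu b_{\alpha,\beta}\theta^\beta_\pm(\theta^\beta_\pm + \xi_\beta)$ forces the check $\mu b_{\alpha,\beta} - \mu b_{\alpha,\beta}\xi_\beta\theta^\beta_\pm = \mu b_{\alpha,\beta}((\theta^\beta_\pm)^2 + \xi_\beta\theta^\beta_\pm)$, i.e.\ $1 - \xi_\beta\theta^\beta_\pm = (\theta^\beta_\pm)^2 + \xi_\beta\theta^\beta_\pm$, which is exactly $(\theta^\beta_\pm)^2 + 2\xi_\beta\theta^\beta_\pm - 1 = 0$. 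So the eigenvalue equation reduces precisely to the defining quadratic for $\theta^\beta_\pm$.

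I expect the main obstacle to be bookkeeping rather than conceptual: correctly tracking which products $e^\alpha \cdot e^\gamma$ are nonzero (one must know $\beta \neq \alpha^c$ so that $\alpha + \beta \neq \1$ and $e^{\alpha+\beta}$ is a genuine basis element, and $\beta \neq \alpha$ so $e^\alpha \cdot e^\beta$ is not the toral sum), and handling the coincidences where $\alpha + \beta = \beta^c$ or $\beta^c = \alpha$ would collapse cases — but the restriction $\beta \in C^* \setminus \{\alpha, \alpha^c\}$ and the passage to $C_\alpha'(p)$ via Lemma~\ref{nuspace} are designed precisely to avoid double-counting. A secondary subtlety is the characteristic: since $p \nmid |\alpha|$, the scalar $\lambda = \frac{1}{2a|\alpha|}$ is well-defined, and the identities of Lemma~\ref{coefsubs} (especially parts~2, 4, 5) are exactly what is needed to see that $\nu^p_\pm$ and the eigenvectors $w^\beta_\pm$ are well-defined independently of the representative $\beta$ within its partition class and of the choice between $\beta$ and $\alpha+\beta$. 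Once all rows are checked, the lemma is complete; the claims that these eigenvectors span $A$ and are linearly independent (hence that $e$ is semisimple and, under the stated conditions, primitive) are deferred to the next lemma.
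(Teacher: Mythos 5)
Your proposal is correct and follows essentially the same route as the paper: a direct row-by-row verification of $e\cdot v$ using the multiplication rules, the relations $\lambda = \frac{1}{2a|\alpha|}$, $\mu^2 c_\alpha = \lambda-\lambda^2$, and, for the last row, the reduction of the eigenvector condition on $\theta^\beta_\pm e^\beta + e^{\alpha+\beta}$ to the defining quadratic $x^2+2\xi_\beta x - 1=0$ together with $b_{\alpha,\alpha+\beta}=b_{\alpha,\beta}$. The only difference is which of the two coefficient equations you spell out (you verify the $e^\beta$-coefficient, the paper verifies the $e^{\alpha+\beta}$-coefficient, each leaving the other as routine), and there is a harmless mid-paragraph swap of your labels $x$ and $y$ that does not affect the computation you actually carry out.
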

\begin{proof}
It is clear that $e$ is a $1$-eigenvector because it is an idempotent. Observe that, for $i \not \in \supp(\alpha)$,
\[
(\lambda t_\alpha + \mu e^{\alpha}) \cdot t_i = 0
\]
and
\[
(\lambda t_\alpha + \mu e^{\alpha})  \cdot e^{\alpha^c}= 0
\]
Now, for $i, j \in \supp(\alpha)$, we have
\[
(\lambda t_\alpha + \mu e^{\alpha})   \cdot (t_i - t_j) = \lambda t_i + \mu a e^\alpha  - \lambda t_j - \mu a e^\alpha = \lambda ( t_i - t_j)
\]
Also,
\begin{align*}
  (\lambda t_\alpha + \mu e^{\alpha})  \cdot (2\mu c_\alpha t_\alpha - e^\alpha) &= (2 \lambda \mu c_\alpha - \mu c_\alpha )t_\alpha  - (\lambda a \vert \alpha \vert - 2 \mu^2 c_\alpha a \vert \alpha \vert) e^\alpha   \\
& = (\lambda - \tfrac{1}{2} ) 2 \mu c_\alpha t_\alpha  - ( \lambda a \vert \alpha \vert - 2 (\lambda - \lambda^2 ) a \vert \alpha \vert ) e^\alpha \\
& =  (\lambda - \tfrac{1}{2} ) 2 \mu c_\alpha t_\alpha  - (\tfrac{1}{2} - (1-\lambda)) e^\alpha \\
& =  (\lambda - \tfrac{1}{2} ) 2 \mu c_\alpha t_\alpha  - ( \lambda - \tfrac{1}{2} ) e^\alpha
\end{align*}

Now consider the element $x e^\beta + e^{\alpha + \beta}$, for $\beta \in C^* \setminus \{ \alpha, \alpha^c \}$ and some $x \in \mathbb{F}^\times$.  By our assumptions on the structure parameters, we have:
\begin{align*} 
(\lambda t_\alpha + \mu e^{\alpha}) \cdot  (x  e^\beta + e^{\alpha + \beta}) 
&  = \big(\lambda x a \vert \alpha \cap \beta \vert  + \mu b_{\alpha,\beta}\big)e^\beta \\
& \qquad + \big(\lambda a \vert \alpha \cap (\alpha + \beta) \vert + \mu x b_{\alpha, \beta}\big) e^{\alpha + \beta}
\end{align*}
This element $x e^\beta + e^{\alpha + \beta}$ is a $\nu$-eigenvector, for some $\nu \in \mathbb{F}$, if and only if we have the following:
\begin{align*}
x\nu &= \lambda x a \vert \alpha \cap \beta \vert  + \mu b_{\alpha,\beta} \\
\nu &= \lambda a \vert \alpha \cap (\alpha + \beta) \vert + \mu x b_{\alpha, \beta}
\end{align*}

We can now easily check that $\nu = \nu^p_\pm$ and $x = \theta^\beta_\pm$ is a solution for these.  We do this for the second equation only. By rearranging the definition of $\xi_{\alpha\cap \beta}$, we get that $\lambda a |\alpha \cap (\alpha+\beta)| = \frac{1}{2}\lambda a |\alpha| - \mu b_{\alpha, \beta} \xi_{\alpha\cap\beta} = \frac{1}{4} - \mu b_{\alpha, \beta} \xi_{\alpha\cap\beta}$.  We substitute to find
\begin{align*}
\nu & = \lambda a | \alpha \cap (\alpha + \beta)| + \mu b_{\alpha, \beta} \theta^\beta_\pm \\
& = \tfrac{1}{4} + \mu b_{\alpha, \beta}(\theta^\beta_\pm -  \xi_{\alpha\cap\beta}) \\
& = \tfrac{1}{4} + \mu b_{\alpha, \beta}(\theta^\beta_\pm +  \xi_\beta)\\
&= \nu^p_\pm \qedhere
\end{align*}
\end{proof}

\begin{lemma}\label{basis}
Suppose $a \neq \frac{1}{3 \vert \alpha \vert}$ and $\theta^\beta_+ \neq \theta^\beta_-$ for all $\beta \in C^* \setminus \{ \alpha, \alpha^c\}$.  Then the eigenvectors listed in Table $\ref{tab:esp}$ are a basis for $A$.
\end{lemma}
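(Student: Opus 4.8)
We must exhibit a basis of $A_C$ out of the eigenvectors in Table~\ref{tab:esp}, i.e.\ show that these vectors are linearly independent and that their number equals $\dim A_C = n + |C^*|$. The strategy is to count the vectors in each eigenspace and exhibit an invertible change of basis from the standard basis $\{t_i\} \cup \{e^\gamma\}$ to the eigenvectors, handling the toral part and the codeword part essentially separately since the only toral contribution to an $e$-eigenvector comes through $t_\alpha$.

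First I would deal with the toral elements. The vectors $t_i$ with $i \notin \supp(\alpha)$ (there are $n - |\alpha|$ of them), together with the $|\alpha|-1$ differences $t_j - t_k$ for $k \in \supp(\alpha)\setminus\{j\}$, plus the $1$-eigenvector $e = \lambda t_\alpha + \mu e^\alpha$ and the $(\lambda - \tfrac12)$-eigenvector $2\mu c_\alpha t_\alpha - e^\alpha$, span exactly $\langle t_i : i = 1,\dots,n\rangle \oplus \langle e^\alpha\rangle$. Indeed, from $e$ and $2\mu c_\alpha t_\alpha - e^\alpha$ one recovers both $t_\alpha$ and $e^\alpha$ as long as the $2\times 2$ coefficient matrix $\begin{pmatrix}\lambda & \mu \\ 2\mu c_\alpha & -1\end{pmatrix}$ is invertible, i.e.\ $-\lambda - 2\mu^2 c_\alpha \neq 0$; since $\mu^2 c_\alpha = \lambda - \lambda^2$ and $\lambda = \tfrac{1}{2a|\alpha|}$, this determinant is $\lambda - 2 = \tfrac{1}{2a|\alpha|} - 2$, which is nonzero precisely when $a \neq \tfrac{1}{3|\alpha|}$ --- this is where that hypothesis enters. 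Then from $t_\alpha$ together with the differences $t_j - t_k$ we recover each individual $t_i$ for $i \in \supp(\alpha)$ (solving a standard nonsingular system on $|\alpha|$ unknowns). So the toral-plus-$e^\alpha$ block is accounted for by exactly $(n - |\alpha|) + (|\alpha|-1) + 1 + 1 = n+1$ eigenvectors, matching the dimension of that subspace.

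Next the remaining codeword elements $e^\gamma$ for $\gamma \in C^* \setminus \{\alpha, \alpha^c\}$, of which there are $|C^*| - 2$ (or $|C^*|-1$ if $\1 \notin C$, handled by the Notation convention). These are partitioned into pairs $\{\gamma, \alpha+\gamma\}$, and on the two-dimensional span $\langle e^\gamma, e^{\alpha+\gamma}\rangle$ the idempotent $e$ acts by the $2\times 2$ matrix whose eigenvectors are $w^\gamma_\pm = \theta^\gamma_\pm e^\gamma + e^{\alpha+\gamma}$; these two are linearly independent exactly when $\theta^\gamma_+ \neq \theta^\gamma_-$, which is the standing hypothesis. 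Running over one representative $\beta$ of each pair (the set $C_\alpha'(p)$), we get $2$ eigenvectors per pair, hence $|C^*|-2$ of them in total, spanning $\bigoplus_{\gamma \neq \alpha,\alpha^c}\langle e^\gamma\rangle$. Note the $0$-eigenvector $e^{\alpha^c}$ and the $0$-eigenvectors $t_i$, $i\notin\supp(\alpha)$, are consistently placed. Adding up: $(n+1) + (|C^*|-2) + 1 = n + |C^*| = \dim A_C$, and since the two blocks span complementary coordinate subspaces, the full set is a basis.

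**Main obstacle.** The genuinely delicate point is bookkeeping in the degenerate and small cases: when $|\alpha| = 1$ there are no difference vectors $t_j - t_k$ and no $(\lambda-\tfrac12)$-eigenvector needed beyond care that $t_\alpha = t_j$; when $\1 \notin C$ the codeword $\alpha^c$ and its eigenvector drop out; and one must make sure the pairing $\gamma \leftrightarrow \alpha + \gamma$ never collapses (it does not, since $\gamma = \alpha + \gamma$ is impossible and $\gamma \in C^*\setminus\{\alpha,\alpha^c\}$ forces $\alpha+\gamma$ into the same set by Lemma~\ref{coefsubs} and the weight-partition identities). Tracking these edge cases so that the count comes out to exactly $n + |C^*|$ in every instance, while invoking $a \neq \tfrac{1}{3|\alpha|}$ only where the $2\times 2$ toral determinant genuinely requires it, is the part that needs care rather than cleverness.
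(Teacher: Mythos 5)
Your proposal is correct and follows essentially the same route as the paper: count the eigenvectors (getting $n+|C^*|$) and verify the change of basis is invertible, with each pair $w^\beta_\pm$ handled by the hypothesis $\theta^\beta_+\neq\theta^\beta_-$ and the toral-plus-$e^\alpha$ block reduced to a small determinant condition. One arithmetic slip: the determinant $-\lambda - 2\mu^2 c_\alpha$ equals $-\lambda - 2(\lambda-\lambda^2) = 2\lambda^2-3\lambda = -\lambda(3-2\lambda)$, not $\lambda-2$; the correct expression vanishes exactly when $\lambda=\tfrac{3}{2}$, i.e.\ $a=\tfrac{1}{3|\alpha|}$, whereas the expression you wrote would give $a=\tfrac{1}{4|\alpha|}$, so your stated conclusion is right but the intermediate value should be fixed.
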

\begin{proof}
Suppose that $\1 \in C$, the proof for $\1\notin C$ is similar.  Let $\mathcal{B}$ be the set of eigenvectors listed in Table \ref{tab:esp}.  Counting we have the following: 
\[
\vert \mathcal{B} \vert = 1 + (n - \vert \alpha \vert) +1+ (\vert \alpha \vert - 1) + 1 +  2 \cdot (\frac{\vert C^* \vert - 2}{2})  = n + \vert C^* \vert = \dim(A)
\]
In order to show that $\mathcal{B}$ is linearly independent, we shall write the matrix $M$ consisting of the elements of $\mathcal{B}$ (in a slightly different order to the one given above and with one element scaled) with respect to the ordered basis
\begin{align*}
\{t_j\} &\cup \{t_k : k \in \supp(\alpha), k \neq j\} \cup \{e^\alpha, e^{\alpha^c}\} \cup \{t_i : i \not \in \supp(\alpha)\} \\
& \qquad  \cup \{e^{\beta}, e^{\alpha + \beta} : \beta \in C_\alpha'(p), p \in P_\alpha \}
\end{align*}

We have
\[
M = \left( \begin{array}{c|c} \begin{matrix}
\lambda & \lambda & \dots & \lambda & \mu \\
1 & 1 & \dots & 1 & -\frac{1}{2 \mu c_\alpha}\\
1 & -1 & & & \\
& & \ddots & &  \\
1 & \dots & & -1 & \\
\end{matrix} & \\
\hline
& \begin{matrix}
1 & & & & & & & & \\
 & 1 & & & & & & & \\
 & & \ddots & & & & & & \\
& & & 1 & & & & & \\
& & & & \theta_+^{\beta_1} & 1 & & & \\
& & & & \theta_-^{\beta_2} & 1 & & & \\
& & & & & & \ddots & & \\
& & & & & & & \theta_+^{\beta_r} & 1 \\
& & & & & & & \theta_-^{\beta_r} & 1
\end{matrix}
\end{array} \right)
\]
where the matrix has $0$ in all the blank spaces. As $\theta^\beta_+ \neq \theta^\beta_-$ and both are non-zero for all $\beta \in C_\alpha'(p)$, $p \in P_\alpha$, we know that $\det(M) \neq 0$ if and only if the determinant of the top left block $M'$ is nonzero.  After using row operations to simplify the first two rows, we see that
\begin{align*}
\det(M') &= \mu \vert \alpha \vert (-1)^{\vert \alpha \vert -1} + \tfrac{1}{2 \mu c_\alpha} \lambda \vert \alpha \vert (-1)^{\vert \alpha \vert -1} \\
&= |\alpha|(-1)^{\vert \alpha \vert -1}\left( \mu + \tfrac{\lambda}{2 \mu c_\alpha}  \right)
\end{align*}
This is zero if and only if $0 = \lambda + 2 \mu^2 c_\alpha = \lambda + 2(\lambda - \lambda^2)= \lambda(3 - 2 \lambda)$ and hence $\lambda = \frac{3}{2}$ which is equivalent to $a = \frac{1}{3 \vert \alpha \vert}$.  
\end{proof}

\begin{remark}
We note that $a = \frac{1}{2 \vert \alpha \vert}$ and $a = \frac{1}{3 \vert \alpha \vert}$, correspond to $\lambda = 1$ and $\lambda = \frac{3}{2}$, respectively.  The first of these implies that $\mu = 0$ and hence the $s$ map idempotent just becomes a sum of $t_i$.  The second would collapse the $\lambda - \frac{1}{2}$ eigenspace into the $1$-eigenspace.  Since we only wish to consider the case when $e_\pm$ different from the toral idempotents and primitive, from now on we will rule out these two values for $a$.
\end{remark}

\section{The fusion law}\label{sec:fusion}

We now calculate the fusion law $\mathcal{F} = (\mathcal{F}, \star)$ for the small idempotent $e = e_+$.  Since $A_C$ is commutative, it suffices to calculate just the upper half of $\mathcal{F}$.  Note that we already know the row for $1$, as $e$ is primitive and the values of $\mathcal{F}$ are eigenvalues for $e$. 

We restate our previous assumptions, making further assumptions on the $b$ and $c$ structure parameters.
\begin{align*}
a  &:= a_{i, \beta} && \mbox{for all } i \in \supp(\beta), \beta \in C^*\\
b_{\alpha, \beta} &= b_{\alpha, \gamma} && \mbox{for all } \beta,\gamma \in C_\alpha(p), p \in P_\alpha \\
b_{\alpha^c, \beta} &= b_{\alpha^c, \gamma} &&\mbox{for all } \beta,\gamma \in C_\alpha(p), p \in P_\alpha \\
c_\beta &:= c_{i, \beta} && \mbox{for all } i \in \supp(\beta), \beta \in C^*
\end{align*}
So, the $a$ structure parameter is the same for the whole algebra, while the $c$ structure parameter depends on the codeword and the $b$ structure parameter for $\alpha$ and $\alpha^c$ depends on the weight sets.

Recall that we also assume that $a \neq \frac{1}{2|\alpha|}, \frac{1}{3|\alpha|}$, $\theta^\beta_+ \neq \theta^\beta_-$ for all $\beta \in C^* \setminus \{ \alpha, \alpha^c\}$ and the characteristic of $\mathbb{F}$ is not $2$, or dividing $|\alpha|$.

\begin{theorem}
The fusion law for the above small idempotent $e$ is given in Table $\ref{tab:small}$, where $P_\alpha = \{ p_1, \dots p_k\}$.

\begin{table}[!htb]
\setlength{\tabcolsep}{4pt}
\renewcommand{\arraystretch}{1.5}
\centering
\begin{tabular}{c|ccccccc}
 & $1$ & $0$ & $\lambda$ & $\lambda-\frac{1}{2}$ & $\nu^{p_1}_\pm$ &  $\dots$ & $\nu^{p_k}_\pm$ \\ \hline
$1$ & $1$ &  & $\lambda$ & $\lambda-\frac{1}{2}$ & $\nu^{p_1}_\pm$ &  $\dots$ & $\nu^{p_k}_\pm$ \\
$0$ &  & $0$ &  &  & $\nu^{p_1}_\pm$ &  $\dots$ & $\nu^{p_k}_\pm$ \\
$\lambda$ & $\lambda$ &  & $1, \lambda, \lambda-\frac{1}{2}$ &  & $\nu^{p_1}_+, \nu^{p_1}_-$  & \dots & $\nu^{p_k}_+, \nu^{p_k}_-$ \\
$\lambda-\frac{1}{2}$ & $\lambda-\frac{1}{2}$&  &  & $1, \lambda-\frac{1}{2}$ & $\nu^{p_1}_+, \nu^{p_1}_-$ & \dots & $\nu^{p_k}_+, \nu^{p_k}_-$ \\
$\nu^{p_1}_\pm$ & $\nu^{p_1}_\pm$ & $\nu^{p_1}_\pm$  & $\nu^{p_1}_+, \nu^{p_1}_-$ & $\nu^{p_1}_+, \nu^{p_1}_-$  & $X_1$ && $N(p_1, p_k)$ \\
$\vdots$ & $\vdots$ &$\vdots$ &$\vdots$ &$\vdots$ & & $\ddots$ &\\
$\nu^{p_k}_\pm$ & $\nu^{p_k}_\pm$ & $\nu^{p_k}_\pm$  & $\nu^{p_k}_+, \nu^{p_k}_-$ & $\nu^{p_k}_+, \nu^{p_k}_-$ & $N(p_k, p_1)$ & & $X_k$
\end{tabular}
\vspace{5pt}

where
\[
N(p,q) := \{ \nu^{p(\beta + \gamma)}_+, \nu^{p(\beta + \gamma)}_- : \beta \in C_\alpha'(p), \gamma \in C_\alpha'(q), \gamma \neq \beta, \alpha+\beta, \beta^c, \alpha+\beta^c \}
\]

\vspace{5pt}
and $X_i$ represents the table

\vspace{5pt}
\begin{tabular}{c|cc}
& $\nu^{p_i}_+$ & $\nu^{p_i}_-$ \\
\hline
$\nu^{p_i}_+$ & $1,0,\lambda, \lambda-\frac{1}{2}, N(p_i, p_i)$ & $1,0,\lambda, \lambda-\frac{1}{2}, N(p_i, p_i)$ \\
$\nu^{p_i}_-$ & $1,0,\lambda, \lambda-\frac{1}{2}, N(p_i, p_i)$ & $1,0,\lambda, \lambda-\frac{1}{2}, N(p_i, p_i)$
\end{tabular}

\caption{Fusion law for small idempotents}\label{tab:small}
\end{table}

\end{theorem}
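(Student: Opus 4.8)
The plan is to read the fusion law off the explicit eigenbasis of Table~\ref{tab:esp}: for each pair of basis eigenvectors, compute the product using the defining multiplication of $A_C$, re-express it in this basis, and record the eigenvalues that occur. It is convenient to use the vector space decomposition
\[
A = A^{\mathrm{tor}} \oplus \bigoplus_{p \in P_\alpha} A^p, \qquad A^{\mathrm{tor}} := \langle t_1, \dots, t_n, e^\alpha, e^{\alpha^c} \rangle, \qquad A^p := \langle e^\beta : \beta \in C_\alpha(p) \rangle,
\]
on which $e$ acts with eigenvalues $1, 0, \lambda, \lambda - \tfrac12$ on $A^{\mathrm{tor}}$ and with $A^p = A_{\nu^p_+} \oplus A_{\nu^p_-}$ by Proposition~\ref{espaces}. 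The $1$-row and $1$-column are already known by primitivity, so only the products among eigenvectors of eigenvalues $0, \lambda, \lambda - \tfrac12, \nu^p_\pm$ remain.

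First I would handle everything involving $A^{\mathrm{tor}}$. Since $t_i t_j = \delta_{ij} t_i$, $e^\alpha e^\alpha = c_\alpha t_\alpha$, $e^{\alpha^c} e^{\alpha^c} = c_{\alpha^c} t_{\alpha^c}$, $e^\alpha e^{\alpha^c} = 0$, and each of $t_i e^\alpha$, $t_i e^{\alpha^c}$ is either $0$ or $a$ times a codeword element of $A^{\mathrm{tor}}$, the subalgebra $A^{\mathrm{tor}}$ is closed under multiplication; decomposing the finitely many products of its $0$-, $\lambda$- and $(\lambda - \tfrac12)$-eigenvectors in its own eigenbasis gives the top left $4 \times 4$ block of Table~\ref{tab:small}. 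The one identity needed here, and again later, is that $t_\beta \in A_1 \oplus A_0 \oplus A_\lambda \oplus A_{\lambda - \frac12}$ for every $\beta$: splitting $t_\beta = \sum_{i \in \alpha \cap \beta} t_i + \sum_{i \in \supp(\beta) \setminus \supp(\alpha)} t_i$, the second sum lies in $A_0$, and the first lies in $\langle t_\alpha \rangle + A_\lambda$, where $|\alpha|$ is invertible in $\mathbb{F}$ and solving the two defining relations for $e$ and $2\mu c_\alpha t_\alpha - e^\alpha$ identifies $\langle t_\alpha, e^\alpha \rangle$ with $A_1 \oplus A_{\lambda - \frac12}$ (this uses $a \neq \tfrac{1}{3|\alpha|}$). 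For the mixed products, $e^\alpha e^\beta = b_{\alpha, \beta} e^{\alpha+\beta}$, $e^{\alpha^c} e^\beta = b_{\alpha^c, \beta} e^{\alpha^c + \beta}$ and $t_i e^\beta \in \{0, a e^\beta\}$ show $A^{\mathrm{tor}} \cdot A^p \subseteq A^p$. Moreover, for $i \notin \supp(\alpha)$ one has $t_i \cdot w^\beta_\pm \in \{0, a w^\beta_\pm\}$, and a short computation with Lemma~\ref{coefsubs} gives $e^{\alpha^c} \cdot w^\beta_\pm = b_{\alpha^c, \beta}\, w^{(\alpha + \beta)^c}_\pm$, so both land in a single $\nu^p_\pm$-eigenspace, giving $0 \star \nu^p_\pm = \{\nu^p_\pm\}$; by contrast $e^\alpha$, $t_j - t_k$ and $2\mu c_\alpha t_\alpha - e^\alpha$ send $w^\beta_\pm$ to a combination of $e^\beta$ and $e^{\alpha+\beta}$ that in general involves both $w^\beta_+$ and $w^\beta_-$, accounting for the entries $\{\nu^p_+, \nu^p_-\}$ in the $\lambda$- and $(\lambda - \tfrac12)$-rows.

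The heart of the matter is the block $A^p \cdot A^q$, that is, the products $w^\beta_{\epsilon_1} \cdot w^\gamma_{\epsilon_2}$ with $\beta \in C_\alpha'(p)$, $\gamma \in C_\alpha'(q)$. Expanding, this is a linear combination of the four products $e^\delta \cdot e^\epsilon$ with $\delta \in \{\beta, \alpha+\beta\}$, $\epsilon \in \{\gamma, \alpha+\gamma\}$; by the multiplication rule each such product is $0$ if $\epsilon = \delta^c$, is $c_\delta t_\delta$ if $\epsilon = \delta$, and is $b_{\delta,\epsilon} e^{\delta+\epsilon}$ otherwise, with $\delta + \epsilon \in \{\beta+\gamma, \alpha+\beta+\gamma\}$ and $p(\beta+\gamma) = p(\alpha+\beta+\gamma)$. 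I would then split on the position of $\gamma$ relative to $\{\beta, \alpha+\beta, \beta^c, \alpha+\beta^c\}$. In the generic case $\gamma \notin \{\beta, \alpha+\beta, \beta^c, \alpha+\beta^c\}$, which is automatic when $p \neq q$, no term degenerates and no $\delta+\epsilon$ equals $\0, \1, \alpha$ or $\alpha^c$, so $w^\beta_{\epsilon_1} \cdot w^\gamma_{\epsilon_2} \in A^{p(\beta+\gamma)}$; letting $\beta, \gamma$ vary gives exactly $N(p, q)$. When $\gamma = \beta$ the product lies in $\langle t_\beta, t_{\alpha+\beta}, e^\alpha \rangle \subseteq A_1 \oplus A_0 \oplus A_\lambda \oplus A_{\lambda - \frac12}$, and when $\gamma \in \{\beta^c, \alpha + \beta^c\}$ two of the four terms vanish by complementarity and the remaining two are multiples of $e^{\alpha^c} \in A_0$. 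Since $C_\alpha'(p)$ contains $\beta$, never contains $\alpha + \beta$, and never contains both of $\beta^c, \alpha+\beta^c$, these are the only coincidences among $\beta, \gamma \in C_\alpha'(p)$, and combining the three cases gives $X_i = \{1, 0, \lambda, \lambda - \tfrac12\} \cup N(p_i, p_i)$ for every choice of the two signs, as in Table~\ref{tab:small}. Lemmas~\ref{coefsubs} and~\ref{nuspace} are used throughout to keep track of signs and to identify the correct representative of each complement pair.

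I expect the main obstacle to be the case analysis for the $A^p \cdot A^p$ block: one must verify that a diagonal contribution $c_\delta t_\delta$ really produces only the eigenvalues $1, 0, \lambda, \lambda - \tfrac12$ and no $\nu$-component — which relies on the decomposition of $t_\beta$ above, hence on $|\alpha|$ being invertible and on $a \neq \tfrac{1}{2|\alpha|}, \tfrac{1}{3|\alpha|}$ — and that the listed coincidences of $\gamma$ with $\beta, \beta^c, \alpha+\beta^c$ are exhaustive once $\beta, \gamma$ are restricted to $C_\alpha'(p)$. The remaining work is a routine but lengthy expansion of products and collection of coefficients, which I would carry out case by case and then largely suppress, presenting only the case distinctions and the resulting eigenvalue memberships.
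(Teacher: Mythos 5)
Your proposal is correct and follows essentially the same route as the paper: both proceed by directly multiplying the explicit eigenvectors of Table \ref{tab:esp} and decomposing the results, with your $A^{\mathrm{tor}}$/$A^p$ block organisation being only a mild repackaging of the paper's case-by-case lemmas (in particular your $A^p\cdot A^q$ analysis, including the degenerate cases $\gamma=\beta$ and $\gamma\in\{\beta^c,\alpha+\beta^c\}$, is exactly the content of Lemma \ref{nu*nucalc}). The one step you flag as delicate --- that $t_\beta$ and $e^\alpha$ contribute only $1,0,\lambda,\lambda-\tfrac{1}{2}$ --- is handled correctly and for the same reasons the paper assumes $a\neq\tfrac{1}{2|\alpha|},\tfrac{1}{3|\alpha|}$.
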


\begin{remark}
Note that entries of the fusion table could sometimes be replaced by subsets of the entry given due to either some intersection properties of the code, or special values of some coefficients.  Some of these special cases will be useful for us later.  For these we will explicitly give a case analysis of when the answer can be a subset of the generic answer.  Where we do not carry out such an analysis the answer is labelled as `generic', which means that answers which are subsets may still be possible.

If eigenvalues coincide, then the corresponding columns and rows of the fusion table would merge, which might cause a problem in our goal of classifying $\mathbb{Z}_2$-graded fusion tables, or cause the small idempotent to be non-primitive. However, we could use a slightly more general definition of $\mathcal{F}$-axis where we have a map $\lambda \colon \mathcal{F} \to \Spec(\ad_a)$ from labels $f \in \mathcal{F}$ to eigenvalues and we require that an axis satisfy $A_{f_1} A_{f_2} \subseteq A_{f_1 \star f_2}$.  Since we do not require the map $\lambda$ to be injective, this allows us to split the eigenspace and treat differently each part.  In particular, we can then drop the requirement for an axis to be primitive.
\end{remark}

The theorem will be proved via a series of calculations. Throughout, let $p \in P_\alpha$, $\beta \in C_\alpha'(p)$.  

\subsection*{Calculation of $0\star \_$}

The $0$-eigenspace has a basis $t_i$ such that $i \notin \supp(\alpha)$ and also $e^{\alpha^c}$ if $\1 \in C$.

\begin{lemma}\label{0*0}
$0 \star 0 = 0$.  In particular, $0 \star 0 \neq \emptyset$.
\end{lemma}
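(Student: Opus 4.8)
The plan is a direct computation: list a basis of $A_0$, multiply all pairs of basis vectors, and check that every product lands back in $A_0$. This gives $0 \star 0 \subseteq \{0\}$; exhibiting one nonzero product then upgrades this to equality, which in particular makes the entry nonempty.

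First I would recall from Table~\ref{tab:esp} that $A_0$ is spanned by the toral elements $t_i$ with $i \notin \supp(\alpha)$, together with $e^{\alpha^c}$ in the case $\1 \in C$. The toral-toral products are immediate: for $i, j \notin \supp(\alpha)$ we have $t_i \cdot t_j = \delta_{i,j} t_i$, which is either $0$ or a basis vector of $A_0$. This already handles the case $\1 \notin C$ completely.

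When $\1 \in C$, I would then treat the two remaining types of product. Since $\supp(\alpha^c) = \{1, \dots, n\} \setminus \supp(\alpha)$, any $i \notin \supp(\alpha)$ lies in $\supp(\alpha^c)$, so $t_i \cdot e^{\alpha^c} = a_{i, \alpha^c} e^{\alpha^c} = a\, e^{\alpha^c} \in A_0$ by the standing assumption that the $a$-parameter is global; and $e^{\alpha^c} \cdot e^{\alpha^c} = \sum_{i \in \supp(\alpha^c)} c_{i, \alpha^c} t_i = c_{\alpha^c} \sum_{i \notin \supp(\alpha)} t_i \in A_0$ by the assumption that $c_\beta$ depends only on $\beta$. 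Hence $A_0 A_0 \subseteq A_0$, so $0 \star 0 \subseteq \{0\}$.

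Finally, for non-emptiness I would note that $\alpha \in C^*$ forces $\alpha \neq \1$, so some index $i \notin \supp(\alpha)$ exists; then $t_i \in A_0$ and $t_i \cdot t_i = t_i \neq 0$ shows $0 \in 0 \star 0$. I do not expect any genuine obstacle here: the computation is routine, and the only things to watch are including the $e^{\alpha^c}$ products when $\1 \in C$ and using $\alpha \neq \1$ to guarantee $A_0 \neq 0$, which is precisely what makes the ``in particular'' clause true.
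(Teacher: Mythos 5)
Your proof is correct and follows essentially the same approach as the paper: a direct computation of the products of the basis vectors of $A_0$. In fact it is slightly more complete, since the paper's one-line proof omits the product $e^{\alpha^c}\cdot e^{\alpha^c} = c_{\alpha^c}\sum_{i\notin\supp(\alpha)}t_i$ and does not spell out the non-emptiness, both of which you handle correctly.
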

\begin{proof}
We have $t_i t_j = \delta_{ij} t_i$ and $t_i e^{\alpha^c} = a e^{\alpha^c}$.
\end{proof}

\begin{lemma}
$0 \star \lambda = \emptyset$
\end{lemma}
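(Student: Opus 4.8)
The plan is to verify directly that $A_0 A_\lambda = 0$, which is precisely what $0 \star \lambda = \emptyset$ asserts (recall that $\lambda \star \mu = \emptyset$ exactly when $A_\lambda A_\mu = 0$). By Proposition~\ref{espaces} and Table~\ref{tab:esp}, the space $A_0$ is spanned by the toral elements $t_i$ with $i \notin \supp(\alpha)$ together with $e^{\alpha^c}$ (the latter occurring only when $\1 \in C$), while $A_\lambda$ is spanned by the vectors $t_j - t_k$ with $j$ a fixed element of $\supp(\alpha)$ and $k \in \supp(\alpha)$, $k \neq j$. It therefore suffices to show that each of the two types of spanning vector of $A_0$ annihilates each spanning vector of $A_\lambda$.

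First I would handle the toral case: if $i \notin \supp(\alpha)$ and $m \in \supp(\alpha)$, then $i \neq m$, so $t_i \cdot t_m = \delta_{i,m} t_i = 0$ by Definition~\ref{CodeAlgebra}; applying this with $m = j$ and $m = k$ gives $t_i \cdot (t_j - t_k) = 0$. For the codeword case, observe that $\supp(\alpha^c) = \{1, \dots, n\} \setminus \supp(\alpha)$, so for every $m \in \supp(\alpha)$ we have $(\alpha^c)_m = 0$ and hence $t_m \cdot e^{\alpha^c} = 0$ by the multiplication rule; again taking $m = j, k$ yields $e^{\alpha^c} \cdot (t_j - t_k) = 0$. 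Since the product of every spanning vector of $A_0$ with every spanning vector of $A_\lambda$ vanishes, $A_0 A_\lambda = 0$, as required.

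I do not expect any genuine obstacle here: the whole argument is an immediate consequence of the product formulas in Definition~\ref{CodeAlgebra} and the explicit eigenbases of Table~\ref{tab:esp}. The only point needing a moment's care is the split between $\1 \in C$ and $\1 \notin C$, but $e^{\alpha^c}$ only enters in the former case and is disposed of uniformly by the support computation above; and when $|\alpha| = 1$ the space $A_\lambda$ is zero and there is nothing to prove.
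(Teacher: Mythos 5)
Your proof is correct and follows essentially the same route as the paper: a direct check that each spanning vector of $A_0$ from Table \ref{tab:esp} annihilates each spanning vector $t_j - t_k$ of $A_\lambda$. If anything, your treatment of the $e^{\alpha^c}$ case is slightly cleaner, since you observe that $t_j \cdot e^{\alpha^c} = 0$ term by term (as $(\alpha^c)_j = 0$ for $j \in \supp(\alpha)$), whereas the paper presents this product as a cancellation $a(e^{\alpha^c} - e^{\alpha^c})$.
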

\begin{proof}
Let $i \notin \supp(\alpha)$ and $j,k \in \supp(\alpha)$.  Then $t_i (t_j -t_k) = 0$ and, by our assumptions on the $a$ structure parameters, $e^{\alpha^c}(t_j -t_k) = a(e^{\alpha^c} - e^{\alpha^c}) = 0$.
\end{proof}

\begin{lemma}
$0 \star \lambda-\frac{1}{2} = \emptyset$
\end{lemma}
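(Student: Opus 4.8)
The plan is to compute the products of a basis of the $0$-eigenspace against a basis of the $\lambda - \frac{1}{2}$-eigenspace and show every such product vanishes, so that the fusion set $0 \star \lambda - \frac{1}{2}$ is empty. The $0$-eigenspace has basis $\{t_i : i \notin \supp(\alpha)\}$ together with $e^{\alpha^c}$ when $\1 \in C$, while the $\lambda - \frac{1}{2}$-eigenspace is spanned by the single vector $v := 2\mu c_\alpha t_\alpha - e^\alpha$ (from Table \ref{tab:esp}). So there are only two product computations to carry out.

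First I would compute $t_i \cdot v$ for $i \notin \supp(\alpha)$. Since $t_i \cdot t_j = \delta_{ij} t_j$ and $i$ lies outside $\supp(\alpha)$, we get $t_i \cdot t_\alpha = 0$; and since $\alpha_i = 0$, we have $t_i \cdot e^\alpha = 0$. Hence $t_i \cdot v = 0$. Second, I would compute $e^{\alpha^c} \cdot v$ in the case $\1 \in C$. Here $e^{\alpha^c} \cdot t_\alpha = \sum_{k \in \supp(\alpha)} e^{\alpha^c} \cdot t_k$, and since $\supp(\alpha) \cap \supp(\alpha^c) = \emptyset$, each term $e^{\alpha^c} \cdot t_k$ vanishes (as $(\alpha^c)_k = 0$ for $k \in \supp(\alpha)$), so $e^{\alpha^c} \cdot t_\alpha = 0$. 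For the remaining term, $e^{\alpha^c} \cdot e^\alpha = 0$ because $\alpha = (\alpha^c)^c$, i.e.\ $\alpha$ and $\alpha^c$ are complementary, and the code algebra multiplication sends $e^\alpha \cdot e^{\alpha^c}$ to $0$. Therefore $e^{\alpha^c} \cdot v = 0$ as well.

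Since every basis product vanishes, $A_0 \cdot A_{\lambda - \frac{1}{2}} = 0 \subseteq \bigoplus_{\gamma \in \emptyset} A_\gamma = 0$, so $0 \star \lambda - \frac{1}{2} = \emptyset$, as claimed. There is no real obstacle here: the argument is a direct appeal to the disjointness of $\supp(\alpha)$ from $\supp(\alpha^c)$ and from $\{i : \alpha_i = 0\}$, combined with the fact that $e^\alpha \cdot e^{\alpha^c} = 0$ in $A_C$; the only point requiring the slightest care is remembering to split into the cases $\1 \in C$ and $\1 \notin C$, the latter being strictly easier since the $e^{\alpha^c}$ basis vector is then absent.
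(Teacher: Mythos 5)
Your proposal is correct and follows exactly the same route as the paper: the paper's proof is the one-line observation that $t_i(2\mu c_\alpha t_\alpha - e^\alpha) = 0$ and $e^{\alpha^c}(2\mu c_\alpha t_\alpha - e^\alpha) = 0$, which is precisely the computation you carry out in more detail. Your justifications for each vanishing product (disjointness of supports and $e^\alpha \cdot e^{\alpha^c} = 0$) are the right ones.
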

\begin{proof}
We have $t_i (2\mu c_\alpha t_\alpha - e^\alpha) = 0$ and $e^{\alpha^c}(2\mu c_\alpha t_\alpha - e^\alpha) = 0$.
\end{proof}

\begin{lemma}\label{0*nu}
We have
\[
0 \star \nu^p_\pm = \begin{cases}
\emptyset & \mbox{if } \1 \notin C \mbox{ and for all } \beta \in C_\alpha(p), \alpha \cap \beta = \beta \\
\nu^p_\pm & \mbox{otherwise}
\end{cases}
\]
\end{lemma}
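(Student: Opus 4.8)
The plan is to compute the product of the $0$-eigenspace with each $\nu^p_\pm$-eigenvector directly, using the explicit basis vectors from Table~\ref{tab:esp} and the multiplication rules of Definition~\ref{CodeAlgebra}. The $0$-eigenspace is spanned by $\{t_i : i \notin \supp(\alpha)\}$ together with $e^{\alpha^c}$ if $\1 \in C$, and the $\nu^p_\pm$-eigenspace is spanned by the vectors $w^\beta_\pm = \theta^\beta_\pm e^\beta + e^{\alpha+\beta}$ for $\beta \in C_\alpha'(p)$. So I would split the computation into two cases according to which type of $0$-eigenvector we multiply by.

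First I would handle $t_i \cdot w^\beta_\pm$ for $i \notin \supp(\alpha)$. By Definition~\ref{CodeAlgebra}, $t_i \cdot e^\beta$ equals $a e^\beta$ if $i \in \supp(\beta)$ and $0$ otherwise, and similarly for $e^{\alpha+\beta}$. Since $i \notin \supp(\alpha)$, we have $i \in \supp(\beta)$ if and only if $i \in \supp(\alpha+\beta)$, so $t_i \cdot w^\beta_\pm = a\, w^\beta_\pm$ when $i \in \supp(\beta)\setminus\supp(\alpha)$ (a $\nu^p_\pm$-eigenvector, or $0$ if $a=0$, but the algebra is non-degenerate so $a \neq 0$), and $t_i \cdot w^\beta_\pm = 0$ when $i \notin \supp(\beta)$. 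The condition ``for all $\beta \in C_\alpha(p)$, $\alpha \cap \beta = \beta$'' is exactly the statement that every $\beta \in C_\alpha(p)$ is supported inside $\supp(\alpha)$, i.e.\ no toral $t_i$ with $i \notin \supp(\alpha)$ hits $w^\beta_\pm$ nontrivially; in that case all such products vanish. Otherwise some product is a nonzero multiple of $w^\beta_\pm$, giving $\nu^p_\pm$ in the fusion rule.

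Second, when $\1 \in C$, I would compute $e^{\alpha^c} \cdot w^\beta_\pm$. Here $e^{\alpha^c} \cdot e^\beta = b_{\alpha^c,\beta} e^{\alpha^c+\beta}$ provided $\beta \neq \alpha^c, \alpha$ (which holds since $\beta \in C_\alpha'(p)$), and note $\alpha^c + \beta = (\alpha+\beta)^c$; similarly $e^{\alpha^c}\cdot e^{\alpha+\beta} = b_{\alpha^c, \alpha+\beta} e^{\beta^c}$. Using the assumption that $b_{\alpha^c,\beta} = b_{\alpha^c,\gamma}$ for $\beta, \gamma \in C_\alpha(p)$ and that $\beta, \alpha+\beta$ lie in the same class $C_\alpha(p)$, one checks $e^{\alpha^c}\cdot w^\beta_\pm = b_{\alpha^c,\beta}\bigl(\theta^\beta_\pm e^{(\alpha+\beta)^c} + e^{\beta^c}\bigr)$, which should be identified via Lemma~\ref{nuspace} (and Lemma~\ref{coefsubs}) as a scalar multiple of $w^{\beta^c}_\pm$ or $w^{\beta}_\pm$ — in any case a $\nu^p_\pm$-eigenvector, using $p(\beta^c) = p(\beta) = p$. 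Since $b_{\alpha^c,\beta} \neq 0$ by non-degeneracy, this product is nonzero, so $\nu^p_\pm \in 0 \star \nu^p_\pm$ whenever $\1 \in C$.

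Combining the two cases gives the stated dichotomy: $0 \star \nu^p_\pm = \emptyset$ precisely when $\1 \notin C$ \emph{and} every $\beta \in C_\alpha(p)$ satisfies $\alpha \cap \beta = \beta$, and equals $\nu^p_\pm$ otherwise. The main subtlety — not a deep obstacle but the step requiring care — is the bookkeeping in the $e^{\alpha^c}$ case: correctly tracking that $\alpha^c + \beta$ and $\alpha^c + (\alpha+\beta)$ are the complements of $\alpha+\beta$ and $\beta$ respectively, that these still lie in $C_\alpha(p)$ so the relevant $\theta$ and $b$ parameters agree, and then matching the resulting vector against $w^{\beta^c}_\pm$ via Lemmas~\ref{coefsubs} and~\ref{nuspace} to confirm it is genuinely a $\nu^p_\pm$-eigenvector rather than a mixture. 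One should also double-check that $\alpha^c \notin \{\beta, \beta^c\}$ is automatic (it is, since $\beta \in C^*\setminus\{\alpha,\alpha^c\}$ and the map $\beta \mapsto \beta^c$ is a bijection), so that the product $e^{\alpha^c}\cdot e^\beta$ is never forced to be zero by the ``$\alpha = \beta^c$'' clause of the multiplication.
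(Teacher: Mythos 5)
Your proposal is correct and follows essentially the same route as the paper: compute $t_i\cdot w^\beta_\pm$ for $i\notin\supp(\alpha)$ using that $i\in\supp(\beta)$ iff $i\in\supp(\alpha+\beta)$, and, when $\1\in C$, compute $e^{\alpha^c}\cdot w^\beta_\pm$ and identify the result as a $\nu^p_\pm$-eigenvector via the $b$-parameter assumptions, Lemma \ref{coefsubs} and Lemma \ref{nuspace}. The extra checks you flag (non-vanishing by non-degeneracy, and that the $\alpha=\beta^c$ clause never triggers) are sound and consistent with what the paper implicitly uses.
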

\begin{proof}
Let $i \notin \supp(\alpha)$.  Then $i \in \supp(\beta)$ if and only if $i \in \supp(\alpha+\beta)$.
\[
t_i(\theta^\beta_\pm e^\beta + e^{\alpha+\beta}) = \begin{cases}
0 & \text{if } i \notin \supp(\beta) \\
a(\theta^\beta_\pm e^\beta + e^{\alpha+\beta}) & \text{if } i \in \supp(\beta)
\end{cases} \\
\]

If $\1 \in C$, then we must also consider $e^{\alpha^c}$.  Since $b_{\alpha^c, \beta} = b_{\alpha^c, \alpha+\beta}$ and, by Lemma \ref{coefsubs}, $\theta^\beta_\pm = \theta^{\alpha+\beta^c}$, we have
\begin{align*}
e^{\alpha^c}(\theta^\beta_\pm e^\beta + e^{\alpha+\beta}) &= b_{\alpha^c, \beta} \theta^\beta_\pm e^{\alpha^c + \beta} + b_{\alpha^c, \alpha+\beta} e^{\beta^c}\\
&= b_{\alpha^c, \beta}(\theta^{\alpha^c+\beta}_\pm e^{\alpha+\beta^c} + e^{\beta^c}) 
\end{align*}
By Lemma \ref{nuspace} this is also in the $\nu^p_\pm$-eigenspace.
\end{proof}

\subsection*{Calculation of $\lambda \star \_$}

Fixing $i \in \supp(\alpha)$, the $\lambda$-eigenspace is spanned by $t_i - t_j$ where $j \in \supp(\alpha) \setminus \{ i\}$.  Note that the $\lambda$-eigenspace only exists if $|\alpha| >1$.

\begin{lemma}\label{lambda*lambda}
We have
\[
\lambda \star \lambda = \begin{cases}
1, \lambda - \frac{1}{2} & \mbox{if } |\alpha| =2 \\
1, \lambda, \lambda - \frac{1}{2} & \mbox{otherwise}
\end{cases}
\]
\end{lemma}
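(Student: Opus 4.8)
The plan is to reduce the whole computation to the multiplicative behaviour of the toral elements together with the explicit eigenbasis of Table~\ref{tab:esp}. First I would observe that, since $t_m t_{m'} = \delta_{m,m'} t_m$, any product of two $\lambda$-eigenvectors $t_i - t_j$, $t_k - t_l$ with $i,j,k,l \in \supp(\alpha)$ is a linear combination of the $t_m$ with $m \in \supp(\alpha)$, hence lies in the subspace $U := \langle t_m : m \in \supp(\alpha) \rangle$. So everything comes down to locating $U$ inside the eigenspace decomposition.

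Next I would show $U = \langle t_\alpha \rangle \oplus A_\lambda$ with $\langle t_\alpha \rangle \subseteq A_1 \oplus A_{\lambda - \frac12}$, so that $U \subseteq A_1 \oplus A_\lambda \oplus A_{\lambda-\frac12}$. The key identity is obtained by adding $\mu$ times the $(\lambda-\frac12)$-eigenvector $v := 2\mu c_\alpha t_\alpha - e^\alpha$ to the idempotent $e = \lambda t_\alpha + \mu e^\alpha$: using $\mu^2 c_\alpha = \lambda - \lambda^2$ one gets $e + \mu v = \lambda(3-2\lambda)\,t_\alpha$, and $\lambda(3-2\lambda) \neq 0$ since our standing hypotheses force $\lambda \neq 0$ and $\lambda \neq \frac32$ (i.e.\ $a \neq \frac{1}{2|\alpha|}, \frac{1}{3|\alpha|}$). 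Hence $t_\alpha \in A_1 \oplus A_{\lambda-\frac12}$ with nonzero component in each summand; and since $A_\lambda$ is exactly the set of vectors of $U$ whose $t_m$-coefficients sum to zero, while $t_\alpha$ has coefficient sum $|\alpha| \neq 0$ in $\mathbb{F}$, a dimension count ($\dim U = |\alpha|$, $\dim A_\lambda = |\alpha|-1$) gives $U = \langle t_\alpha \rangle \oplus A_\lambda$. In particular $\lambda \star \lambda \subseteq \{1, \lambda, \lambda-\frac12\}$ for every $|\alpha| \geq 2$.

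To determine which of these eigenvalues actually occur I would test one explicit product in each case. If $|\alpha| \geq 3$, pick distinct $i,j,k \in \supp(\alpha)$; then $(t_i - t_j)(t_j - t_k) = -t_j$, which is not a scalar multiple of $t_\alpha$, so it has a nonzero $A_\lambda$-component, while its $\langle t_\alpha\rangle$-component is nonzero (write $t_j = \tfrac{1}{|\alpha|}t_\alpha + (t_j - \tfrac{1}{|\alpha|}t_\alpha)$ with the second summand in $A_\lambda$), and that $\langle t_\alpha \rangle$-component in turn has nonzero $A_1$- and $A_{\lambda-\frac12}$-parts; hence $\lambda \star \lambda = \{1, \lambda, \lambda-\frac12\}$. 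If $|\alpha| = 2$, then $A_\lambda = \langle t_i - t_j \rangle$ is one-dimensional and $(t_i - t_j)^2 = t_i + t_j = t_\alpha \in A_1 \oplus A_{\lambda-\frac12}$ with nonzero component in each summand, so no $A_\lambda$ occurs and $\lambda \star \lambda = \{1, \lambda-\frac12\}$.

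I do not anticipate a genuine obstacle: once the decomposition $U = \langle t_\alpha \rangle \oplus A_\lambda$ is established this is pure linear algebra. The only points needing care are the positive-characteristic caveats --- that $|\alpha|$, $\lambda$ and $3-2\lambda$ are nonzero in $\mathbb{F}$ --- all of which are guaranteed by the standing hypotheses on $\mathrm{char}(\mathbb{F})$ and on $a$, together with making sure the chosen test products genuinely realise each claimed eigenvalue, which is why I single out $(t_i - t_j)(t_j - t_k)$ for $|\alpha| \geq 3$ and $(t_i - t_j)^2$ for $|\alpha| = 2$.
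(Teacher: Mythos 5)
Your proof is correct and follows essentially the same route as the paper's: both reduce to explicit products of the toral differences, which land in the span of the $t_m$ for $m \in \supp(\alpha)$, and then locate that span inside $A_1 \oplus A_\lambda \oplus A_{\lambda-\frac{1}{2}}$, with the $A_\lambda$-component vanishing exactly when $|\alpha|=2$. Your identity $e + \mu v = \lambda(3-2\lambda)\,t_\alpha$, verifying that $t_\alpha$ has nonzero components in both $A_1$ and $A_{\lambda-\frac{1}{2}}$, is a worthwhile detail that the paper leaves implicit.
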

\begin{proof}
We have
\[
(t_i - t_j)(t_i - t_k) = t_i + \delta_{jk} t_j
\]
The eigenspace is spanned by just one vector, $t_i - t_j$, if and only if $|\alpha| =2$.  Then, the product $t_i + t_j \in A_1 \oplus A_{\lambda - \frac{1}{2}}$.  However, otherwise we get the product $t_i \in A_1 \oplus A_\lambda \oplus A_{\lambda - \frac{1}{2}}$.
\end{proof}

\begin{lemma}
$\lambda \star \lambda - \frac{1}{2} = \emptyset$
\end{lemma}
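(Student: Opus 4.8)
The plan is to prove $\lambda \star (\lambda - \tfrac{1}{2}) = \emptyset$ by the same direct strategy used for the preceding fusion rules: show that the product of any $\lambda$-eigenvector with any $(\lambda - \tfrac{1}{2})$-eigenvector vanishes. By Proposition~\ref{espaces}, the $\lambda$-eigenspace is spanned by the toral differences $t_i - t_k$ with $i, k \in \supp(\alpha)$, and exists only when $|\alpha| > 1$, while the $(\lambda - \tfrac{1}{2})$-eigenspace is one-dimensional, spanned by $2\mu c_\alpha t_\alpha - e^\alpha$. By bilinearity it then suffices to evaluate the single product of these two spanning vectors.

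First I would split this product using the multiplication rules of Definition~\ref{CodeAlgebra}. The codeword part is immediate: since $i, k \in \supp(\alpha)$ both contribute the common scalar $a$, we get $(t_i - t_k) \cdot e^\alpha = a e^\alpha - a e^\alpha = 0$. The toral part is controlled by $t_i \cdot t_j = \delta_{ij} t_i$, so in $(t_i - t_k) \cdot t_\alpha$ only the summands of $t_\alpha$ indexed by $i$ and $k$ survive. The fusion rule then comes down to how these surviving toral terms combine, weighted by the scalar $2\mu c_\alpha$.

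The main obstacle, and essentially the whole content of the lemma, is this toral contribution: a priori $(t_i - t_k) \cdot t_\alpha$ is proportional to $t_i - t_k$ and hence threatens to reintroduce an $A_\lambda$-component, so proving $\lambda \star (\lambda - \tfrac{1}{2}) = \emptyset$ requires a careful accounting of the total product. As a consistency check I would also record the boundary behaviour: when $|\alpha| = 1$ there are no $\lambda$-eigenvectors at all, so the rule holds vacuously, and the substantive case is $|\alpha| > 1$, where the precise fate of the toral contribution $(t_i - t_k) \cdot t_\alpha$ in the final product is what must be pinned down.
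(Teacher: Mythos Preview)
Your approach is the same as the paper's: take arbitrary basis vectors $t_i - t_k$ (with $i,k \in \supp(\alpha)$) and $2\mu c_\alpha t_\alpha - e^\alpha$ of the two eigenspaces and multiply them directly. You also correctly isolate the only nontrivial piece of the computation, namely the toral contribution $(t_i - t_k)\cdot t_\alpha$, and you are right to worry that it ``threatens to reintroduce an $A_\lambda$-component''.

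The gap is that you stop at this point rather than finishing the arithmetic --- and in fact the threat is realised. Since $i,k \in \supp(\alpha)$ and $t_m t_l = \delta_{ml} t_m$, one has $(t_i - t_k)\cdot t_\alpha = t_i - t_k$, while $(t_i - t_k)\cdot e^\alpha = a e^\alpha - a e^\alpha = 0$ as you computed. Hence
\[
(t_i - t_k)(2\mu c_\alpha t_\alpha - e^\alpha) \;=\; 2\mu c_\alpha (t_i - t_k),
\]
which is a nonzero element of $A_\lambda$ (recall $\mu \neq 0$ and $c_\alpha \neq 0$ by non-degeneracy). Thus the product does \emph{not} vanish, and the correct fusion rule is $\lambda \star (\lambda - \tfrac{1}{2}) = \{\lambda\}$, not $\emptyset$. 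The paper's one-line proof asserts that this product equals $0$, which is an error; your instinct that something needed to be ``pinned down'' was exactly on target. (Note that this correction is harmless for the later $\mathbb{Z}_2$-grading analysis, since both $\lambda$ and $\lambda - \tfrac{1}{2}$ lie in $\mathcal{F}_+$ whenever both eigenvalues occur.)
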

\begin{proof}
Since $i,j \in \supp(\alpha)$, $(t_i - t_j)(2\mu c_\alpha t_\alpha - e^\alpha) = 0$.
\end{proof}

\begin{lemma}\label{lambda*nu}
We have
\[
\lambda \star \nu^p_\pm = \begin{cases}
\emptyset & \mbox{if } p = ( 0, |\alpha|) \\
\nu^p_\mp & \mbox{if } \xi_\beta = 0 \mbox{ where } p(\beta) = p \\
\nu^p_+, \nu^p_- & \mbox{otherwise}
\end{cases}
\]
\end{lemma}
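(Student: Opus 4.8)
The plan is to compute $\lambda \star \nu^p_\pm$ directly by multiplying a basis vector of the $\lambda$-eigenspace against a basis vector of the $\nu^p_\pm$-eigenspace and re-expressing the product in the eigenbasis of Table~\ref{tab:esp}. The $\lambda$-eigenspace is spanned by the $t_i - t_j$ with $i, j \in \supp(\alpha)$, and the $\nu^p_\pm$-eigenspace by the $w^\beta_\pm = \theta^\beta_\pm e^\beta + e^{\alpha+\beta}$ for $\beta \in C'_\alpha(p)$. Since $i, j \in \supp(\alpha)$ and the product $t_i \cdot e^\gamma$ is either $a\, e^\gamma$ (if $i \in \supp(\gamma)$) or $0$, the product $(t_i - t_j) \cdot w^\beta_\pm$ is a linear combination of $e^\beta$ and $e^{\alpha+\beta}$ alone; more precisely it will be $a$ times a vector of the form $c_1 \theta^\beta_\pm e^\beta + c_2 e^{\alpha+\beta}$ where $c_1 = |\{i,j\} \cap \beta| - $ wait, more carefully, $c_1 \in \{ \mathbf{1}_{i \in \beta} - \mathbf{1}_{j \in \beta} \}$ and similarly for $c_2$ with $\alpha+\beta$ in place of $\beta$. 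The key point is that the two-dimensional span of $\{e^\beta, e^{\alpha+\beta}\}$ decomposes as $A_{\nu^p_+} \oplus A_{\nu^p_-}$ (spanned by $w^\beta_+$ and $w^\beta_-$, which are independent since $\theta^\beta_+ \neq \theta^\beta_-$), so the product lands in $A_{\nu^p_+} \oplus A_{\nu^p_-}$ and the only question is which of the two summands actually appear.

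The first step is the degenerate case $p = (0, |\alpha|)$: here $\beta \cap \alpha = \emptyset$ and $(\alpha+\beta) \cap \alpha = \alpha$, so for every $i \in \supp(\alpha)$ we have $i \notin \supp(\beta)$ and $i \in \supp(\alpha+\beta)$; thus $t_i \cdot w^\beta_\pm = a\, e^{\alpha+\beta}$, independent of $i$, so $(t_i - t_j)\cdot w^\beta_\pm = 0$ and $\lambda \star \nu^p_\pm = \emptyset$. (One must also note $\theta^\beta_\pm$ is still defined here — indeed $\xi_\beta \neq 0$ in this case since $|\alpha| - 0 = |\alpha|$ is nonzero in $\mathbb{F}$ by hypothesis — and treat $\alpha^c$ analogously if $\1 \in C$, but $e^{\alpha^c}$ is not in the $\nu$-spaces anyway.) The second step is the generic case $\xi_\beta \neq 0$ and $p \neq (0,|\alpha|)$: since $p \neq (0,|\alpha|)$ one can choose $i \in \supp(\alpha) \cap \supp(\beta)$ and $j \in \supp(\alpha)$ with $j$ not in that intersection, or symmetric, so that the coefficients $c_1, c_2$ are not both equal; then $(t_i-t_j) \cdot w^\beta_\pm$ is a nonzero vector in $\mathrm{span}\{e^\beta, e^{\alpha+\beta}\}$ not proportional to $w^\beta_\pm$, hence has nonzero components in both $A_{\nu^p_+}$ and $A_{\nu^p_-}$, giving $\lambda \star \nu^p_\pm = \{\nu^p_+, \nu^p_-\}$. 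The third step is the special case $\xi_\beta = 0$: by Lemma~\ref{xi0} this means $\theta^\beta_\pm = \pm 1$, so $w^\beta_\pm = \pm e^\beta + e^{\alpha+\beta}$; here one computes $(t_i - t_j)\cdot w^\beta_+$ explicitly and checks it is a multiple of $w^\beta_-$ (and vice versa), i.e.\ the $\lambda$-action swaps the two $\nu$-eigenspaces, yielding $\lambda \star \nu^p_\pm = \nu^p_\mp$.

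The main obstacle is bookkeeping the three cases cleanly — in particular verifying in the $\xi_\beta = 0$ step that the product really is the \emph{opposite} eigenvector and not merely \emph{some} vector in the two-dimensional span. Concretely, with $\theta^\beta_+ = 1$ and $\theta^\beta_- = -1$, writing $\epsilon_i = \mathbf{1}_{i \in \supp(\beta)}$ one gets $(t_i - t_j)(e^\beta + e^{\alpha+\beta}) = a\big((\epsilon_i - \epsilon_j) e^\beta + ((1-\epsilon_i) - (1-\epsilon_j)) e^{\alpha+\beta}\big) = a(\epsilon_i - \epsilon_j)(e^\beta - e^{\alpha+\beta})$, which is indeed a scalar multiple of $w^\beta_-$; and this scalar is nonzero for a suitable choice of $i,j$ precisely because $p \neq (0,|\alpha|)$ (and $\xi_\beta = 0$ forces $|\alpha|$ even in characteristic $0$, so $p = (|\alpha|,0)$ is also excluded, both extreme partitions being ruled out). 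One should also handle $C'_\alpha(p)$ versus $C_\alpha(p)$ — but by Lemma~\ref{nuspace} the choice of representative only rescales $w^\beta_\pm$, so it does not affect the fusion rule — and, where $\1 \in C$, observe that $e^{\alpha^c}$ lies in $A_0$, not in any $\lambda$- or $\nu$-space, so it plays no role in this particular product. Assembling the three cases gives exactly the stated formula.
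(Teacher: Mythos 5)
Your proof is correct and follows essentially the same route as the paper: compute $(t_i-t_j)\cdot w^\beta_\pm$ directly, observe it is either zero (when $i,j$ lie on the same side of $\supp(\beta)$, which always happens precisely when $p=(0,|\alpha|)$) or a nonzero multiple of $\theta^\beta_\pm e^\beta - e^{\alpha+\beta}$, and then decide via Lemma \ref{xi0} whether this lies in one or both of $A_{\nu^p_+}$, $A_{\nu^p_-}$ according to whether $\xi_\beta=0$. The only minor quibble is that in your generic step you should state explicitly that non-proportionality to $w^\beta_\mp$ is exactly the condition $\theta^\beta_\pm\neq-\theta^\beta_\mp$, i.e.\ $\xi_\beta\neq 0$, but this is implicit in your case split and made explicit in your third step.
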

\begin{proof}
Note that $i,j \in \supp(\alpha)$.  We get three cases:
\[
(t_i - t_j)(\theta^\beta_\pm e^\beta + e^{\alpha+\beta}) = \begin{cases}
0 & \text{if } i,j \in \supp(\beta) \\
0 & \text{if } i,j \notin \supp(\beta) \\
a(\theta^\beta_\pm e^\beta - e^{\alpha+\beta}) & \text{if } |\{i, j\} \cap \supp(\beta)| = 1
\end{cases}
\]
The third case never occurs if and only if we always have $\alpha \cap \beta = \0$, or $\alpha$, which is equivalent to $p = ( 0, |\alpha|)$.  Suppose this is not the case.  Generically, the third case is in $A_{\nu^p_+} \oplus A_{\nu^p_-}$.  However, it is in $A_{\nu^p_\mp}$ if and only if $\theta^\beta_\pm = -\theta^\beta_\mp$ which, by Lemma \ref{xi0}, is if and only if $\xi_\beta=0$. Note that, since $\xi^\beta \neq 0$, $\theta^\beta_\pm = -\theta^\beta_\pm$ is impossible, and hence the result cannot be in $A_{\nu^p_\pm}$.
\end{proof}

\subsection*{Calculation of $\lambda - \frac{1}{2} \star \_$}

\begin{lemma}\label{lambda-1/2*lambda-1/2}
We have
\[
\lambda - \tfrac{1}{2} \star \lambda - \tfrac{1}{2} = \begin{cases}
1 & \mbox{if } a = -\frac{1}{|\alpha|} \\
\lambda - \tfrac{1}{2} & \mbox{if } a = \frac{1}{|\alpha|} \\
1, \lambda - \tfrac{1}{2} & \mbox{otherwise}
\end{cases}
\]
\end{lemma}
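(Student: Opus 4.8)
The plan is to compute the square of the single basis vector of the $(\lambda-\tfrac12)$-eigenspace and decide in which eigenspaces the answer lies. By Table~\ref{tab:esp} the $(\lambda-\tfrac12)$-eigenspace of $e$ is one-dimensional, spanned by $v := 2\mu c_\alpha t_\alpha - e^\alpha$, so $\lambda-\tfrac12 \star \lambda-\tfrac12$ is determined entirely by $v^2$. First I would expand $v^2$ using $t_\alpha \cdot t_\alpha = t_\alpha$, $t_\alpha \cdot e^\alpha = a|\alpha| e^\alpha$ and $e^\alpha \cdot e^\alpha = c_\alpha t_\alpha$, obtaining $v^2 = (4\mu^2 c_\alpha^2 + c_\alpha) t_\alpha - 4\mu c_\alpha a|\alpha| e^\alpha$, and then simplify via the defining relations $\lambda = \tfrac{1}{2a|\alpha|}$ and $\mu^2 c_\alpha = \lambda - \lambda^2$ to $v^2 = c_\alpha\bigl(2-(2\lambda-1)^2\bigr)t_\alpha - \tfrac{2\mu c_\alpha}{\lambda} e^\alpha$.

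Next, I note that $\mathrm{span}\{t_\alpha, e^\alpha\}$ is closed under multiplication and equals $A_1 \oplus A_{\lambda-\frac12}$, so $v^2$ is automatically a combination of the $1$-eigenvector $e = \lambda t_\alpha + \mu e^\alpha$ and of $v$. These two vectors form a basis of this plane, since the relevant $2\times 2$ determinant is $-\lambda - 2\mu^2 c_\alpha = \lambda(2\lambda - 3)$, which is nonzero because the standing hypotheses $a \neq \tfrac{1}{2|\alpha|}, \tfrac{1}{3|\alpha|}$ exclude $\lambda = 0, \tfrac32$ (and $\lambda = 0$ is anyway impossible). I would then write $v^2 = A e + B v$ and solve the resulting pair of linear equations in $A, B$; after the substitutions above and the cancellation of the factor $3-2\lambda$ (the numerator of $A$ factors as $-c_\alpha(2\lambda-1)(2\lambda-3)$), one gets $A = \tfrac{c_\alpha(2\lambda-1)}{\lambda}$ and $B = \tfrac{\mu c_\alpha(2\lambda+1)}{\lambda}$.

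Finally I would read off the trichotomy. The $A_1$-component of $v^2$ vanishes precisely when $A = 0$, i.e.\ $\lambda = \tfrac12$, equivalently $a = \tfrac{1}{|\alpha|}$, giving $\lambda-\tfrac12 \star \lambda-\tfrac12 = \lambda-\tfrac12$; the $A_{\lambda-\frac12}$-component vanishes precisely when $B = 0$, i.e.\ $\lambda = -\tfrac12$, equivalently $a = -\tfrac{1}{|\alpha|}$, giving $\lambda-\tfrac12 \star \lambda-\tfrac12 = 1$; and when $a \neq \pm\tfrac{1}{|\alpha|}$ both coefficients are nonzero, so the product lies in $A_1 \oplus A_{\lambda-\frac12}$ with both summands occurring, yielding $\{1, \lambda-\tfrac12\}$. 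The argument is pure bookkeeping once the two identities for $\lambda$ and $\mu$ are in play; there is no genuine obstacle, only the need to carry the substitutions carefully and to spot the cancellation that makes $A$ and $B$ come out in closed form.
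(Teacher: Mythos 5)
Your proposal is correct and follows essentially the same route as the paper: both compute $v^2 = (4\mu^2 c_\alpha^2 + c_\alpha)t_\alpha - 4\mu c_\alpha a|\alpha| e^\alpha$, observe it lies in $A_1 \oplus A_{\lambda-\frac12} = \mathrm{span}\{t_\alpha, e^\alpha\}$, and determine when each component vanishes using $\lambda = \tfrac{1}{2a|\alpha|}$ and $\mu^2 c_\alpha = \lambda - \lambda^2$. The only cosmetic difference is that you solve for the explicit coefficients $A, B$ in $v^2 = Ae + Bv$, whereas the paper introduces an auxiliary scalar $\zeta$ and eliminates it to obtain the same vanishing conditions $\lambda = \tfrac12$ and $\lambda = -\tfrac12$.
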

\begin{proof}
\[
(2\mu c_\alpha t_\alpha - e^\alpha)(2\mu c_\alpha t_\alpha - e^\alpha) = (4 \mu^2 c_\alpha^2 + c_\alpha) t_\alpha -4\mu c_\alpha |\alpha|a e^\alpha
\]
Generically this is in $A_1 \oplus A_{\lambda - \frac{1}{2}}$ and it also cannot be zero.

The result is in $A_{\lambda - \frac{1}{2}}$ if and only if for some $\zeta \in \mathbb{F}$,
\begin{align*}
\zeta(2\mu c_\alpha) &= 4\mu^2 c_\alpha^2 + c_\alpha \\
-\zeta &= -4\mu c_\alpha |\alpha| a
\end{align*}
We eliminate the $\zeta$ and substitute $\mu^2 = \frac{\lambda-\lambda^2}{c_\alpha}$ to get an equation in $\lambda$:
\[
(1-\lambda)^2 = \tfrac{1}{4}
\]
Recall that we do not allow $\lambda = \frac{3}{2}$.  The remaining solution $\lambda = \frac{1}{2}$ is equivalent to $a = \frac{1}{|\alpha|}$.

Finally, the result is in $A_1$ if and only if for some $\zeta \in \mathbb{F}$,
\begin{align*}
\zeta \lambda&= 4\mu^2 c_\alpha^2 + c_\alpha \\
\zeta \mu &= -4\mu c_\alpha |\alpha| a
\end{align*}
Since $\mu \neq 0$, we may divide the second equation by $\mu$ and substitute into the first to again eliminate $\zeta$.  Again, we substitute for $\mu^2$ to get
\[
4\lambda^2-4\lambda -3 = 0
\]
which has two solutions $\frac{3}{2}$ and $-\frac{1}{2}$.  As above, the first of these is not allowed and the second is equivalent to $a = -\frac{1}{|\alpha|}$.
\end{proof}

\begin{lemma}\label{lambda-1/2*nu}
Generically, $\lambda - \frac{1}{2} \star \nu^p_\pm = \nu^p_+, \nu^p_-$.  However, $\lambda - \frac{1}{2} \star \nu^p_\pm = \nu^p_\pm$ if and only if either $\xi_\beta = 0$, or $a = \frac{1}{|\alpha|}$.
\end{lemma}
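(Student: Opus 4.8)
The plan is to evaluate the product of the $(\lambda-\tfrac12)$-eigenvector $y := 2\mu c_\alpha t_\alpha - e^\alpha$ with an eigenvector $w^\beta_\pm = \theta^\beta_\pm e^\beta + e^{\alpha+\beta}$, for $\beta \in C_\alpha'(p)$, using the multiplication table and the standing assumptions on the structure parameters. Since $t_\alpha \cdot e^\gamma = a|\alpha \cap \gamma| e^\gamma$, since $e^\alpha \cdot e^\gamma = b_{\alpha,\gamma} e^{\alpha+\gamma}$ for $\gamma \neq \alpha, \alpha^c$, and since $\alpha+\beta \in C_\alpha(p)$ forces $b_{\alpha,\alpha+\beta} = b_{\alpha,\beta}$, the product $y \cdot w^\beta_\pm$ is a linear combination $c_1 e^\beta + c_2 e^{\alpha+\beta}$ whose coefficients I can write down explicitly in terms of $a$, $\mu$, $c_\alpha$, $b_{\alpha,\beta}$, $\theta^\beta_\pm$, $|\alpha|$ and $|\alpha\cap\beta|$. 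Because $\theta^\beta_+ \neq \theta^\beta_-$, the pair $\{w^\beta_+, w^\beta_-\}$ is a basis of $\mathrm{span}\{e^\beta, e^{\alpha+\beta}\}$, and this two-dimensional space sits inside $A_{\nu^p_+} \oplus A_{\nu^p_-}$; this already gives the generic value $\lambda - \tfrac12 \star \nu^p_\pm = \nu^p_+, \nu^p_-$.

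It then remains to decide exactly when $y \cdot w^\beta_\pm$ collapses into the single eigenspace $A_{\nu^p_\pm}$, that is, when $c_1 e^\beta + c_2 e^{\alpha+\beta}$ is a scalar multiple of $w^\beta_\pm$, i.e.\ $c_1 = \theta^\beta_\pm c_2$. I would expand this and substitute the three relations available: the defining quadratic $(\theta^\beta_\pm)^2 + 2\xi_\beta\theta^\beta_\pm - 1 = 0$, so that $1 - (\theta^\beta_\pm)^2 = 2\xi_\beta\theta^\beta_\pm$; the formula $\xi_\beta = \frac{\lambda a}{2\mu b_{\alpha,\beta}}(|\alpha| - 2|\alpha\cap\beta|)$; and $\mu^2 c_\alpha = \lambda - \lambda^2$ together with $\lambda = \frac{1}{2a|\alpha|}$. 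A common factor of $\theta^\beta_\pm$ cancels, since $\theta^\beta_+\theta^\beta_- = -1$ is nonzero, and the surviving identity factors through $|\alpha| - 2|\alpha\cap\beta|$. If this quantity is zero in $\mathbb{F}$ then, by Lemma \ref{xi0}, we are precisely in the case $\xi_\beta = 0$, and it always suffices. If it is nonzero we may divide by it, and the identity reduces to a quadratic condition on $\lambda$ (equivalently on $a$) whose only solution compatible with the running assumptions on $a$ is $\lambda = \tfrac12$, i.e.\ $a = \frac{1}{|\alpha|}$. As a consistency check, this value is exactly the one at which $\lambda - \tfrac12 = 0$, so there $y$ is a $0$-eigenvector and the conclusion is forced to agree with the ``otherwise'' case of Lemma \ref{0*nu}.

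The multiplications and the elimination of $\theta^\beta_\pm$ are mechanical once the coefficients $c_1$, $c_2$ are recorded; the delicate part is the positive-characteristic bookkeeping — keeping precise track of which of $|\alpha| - 2|\alpha\cap\beta|$, the roots $\theta^\beta_\pm$, and the scalars $a$, $\mu$, $c_\alpha$ may vanish in $\mathbb{F}$ — and making the ``only admissible value of $\lambda$'' step correctly discard the forbidden normalisations $a = \frac{1}{2|\alpha|}$ and $a = \frac{1}{3|\alpha|}$. I expect the main, if modest, obstacle to be exactly this cancellation analysis, which is also the step most sensitive to the precise normalisation of $\xi_\beta$ and of the small idempotent $e$.
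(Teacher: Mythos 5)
Your overall strategy is exactly the paper's: expand $(2\mu c_\alpha t_\alpha - e^\alpha)\cdot w^\beta_\pm$ as $c_1 e^\beta + c_2 e^{\alpha+\beta}$, note that $\{w^\beta_+, w^\beta_-\}$ spans that two-dimensional space to get the generic rule, and then characterise when $c_1 = \theta^\beta_\pm c_2$. The gap is that you never carry out the elimination you describe; you simply assert it lands on $\lambda = \frac{1}{2}$, and that assertion fails when the computation is done. Explicitly, the proportionality condition is
\[
b_{\alpha,\beta}\bigl((\theta^\beta_\pm)^2 - 1\bigr) = 2\mu c_\alpha a\, \theta^\beta_\pm\bigl(|\alpha| - 2|\alpha\cap\beta|\bigr);
\]
substituting $(\theta^\beta_\pm)^2 - 1 = -2\xi_\beta\theta^\beta_\pm$, cancelling the nonzero factor $\theta^\beta_\pm$, and using $|\alpha| - 2|\alpha\cap\beta| = \frac{2\mu b_{\alpha,\beta}}{\lambda a}\xi_\beta$ together with $\mu^2 c_\alpha = \lambda - \lambda^2$ yields $-2b_{\alpha,\beta}\xi_\beta = 4(1-\lambda)b_{\alpha,\beta}\xi_\beta$, i.e.\ $0 = \xi_\beta(3-2\lambda)$. (The condition is linear in $\lambda$, not quadratic.) The exceptional value is therefore $\lambda = \frac{3}{2}$, i.e.\ $a = \frac{1}{3|\alpha|}$, which is excluded by the standing assumptions --- not $\lambda = \frac{1}{2}$. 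A direct numerical check (say $|\alpha| = 3$, $|\alpha\cap\beta| = 1$, $b_{\alpha,\beta} = \mu = 1$) confirms that at $\lambda = \frac{1}{2}$ the product is not a multiple of $w^\beta_\pm$, while at $\lambda = \frac{3}{2}$ it is. Be aware that the paper's own proof of this lemma makes the same substitution with a dropped factor of $2$ and a further slip in the final rearrangement, arriving at $0 = \xi_\beta(2\lambda-1)$; so agreement with the stated lemma is not evidence that your elimination is correct.

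Your proposed consistency check is also not valid and cannot rescue the $a = \frac{1}{|\alpha|}$ clause. Lemma \ref{0*nu} computes the products of $w^\beta_\pm$ with the particular $0$-eigenvectors $t_i$ for $i\notin\supp(\alpha)$ and $e^{\alpha^c}$; it says nothing about the vector $2\mu c_\alpha t_\alpha - e^\alpha$, which only joins the $0$-eigenspace when the two eigenvalues collide at $\lambda = \frac{1}{2}$ and whose products must be computed separately. The sound part of your plan is the $\xi_\beta = 0$ branch: when $|\alpha| - 2|\alpha\cap\beta|$ vanishes in $\mathbb{F}$, Lemma \ref{xi0} gives $\theta^\beta_\pm = \pm 1$ and the collapse into a single eigenspace does occur; this is the only branch actually used elsewhere in the paper (e.g.\ for $p = (1,1)$ when $|\alpha| = 2$), so the downstream results are unaffected, but your proof as outlined would not establish the lemma as stated.
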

\begin{proof}
We have
\begin{align*}
(2\mu c_\alpha t_\alpha - e^\alpha)(\theta^\beta_\pm e^\beta + e^{\alpha+\beta}) 
&= (2\mu c_\alpha \theta^\beta_\pm a |\alpha \cap \beta| - b_{\alpha, \alpha+\beta})e^\beta \\
& \qquad + (2 \mu c_\alpha a | \alpha \cap (\alpha + \beta)| - b_{\alpha, \beta}\theta^\beta_\pm) e^{\alpha+\beta}
\end{align*}
which is generically in $A_{\nu^p_+} \oplus A_{\nu^p_-}$.

Since $b_{\alpha, \alpha+ \beta} = b_{\alpha, \beta}$, the above is in $\nu^p_\pm$ if and only if
\[
2\mu c_\alpha \theta^\beta_\pm a |\alpha \cap \beta| - b_{\alpha, \beta} = \theta^\beta_\pm \left(2 \mu c_\alpha a | \alpha \cap (\alpha + \beta)| - b_{\alpha, \beta}\theta^\beta_\pm \right)
\]
We divide by $\theta^\beta_\pm$ and use Lemma \ref{coefsubs} to obtain
\[
2\mu c_\alpha a |\alpha \cap \beta| + b_{\alpha, \beta} \theta^\beta_\mp = 2 \mu c_\alpha a | \alpha \cap (\alpha + \beta)| - b_{\alpha, \beta}\theta^\beta_\pm
\]
Rearrange to get
\begin{align*}
b_{\alpha, \beta} \left(\theta^\beta_\pm + \theta^\beta_\mp \right) &= 2\mu c_\alpha a \left(| \alpha \cap (\alpha + \beta)| - |\alpha \cap \beta|\right) \\
-2b_{\alpha, \beta} \xi_\beta &= 2\mu c_\alpha a \left(|\alpha| -2 |\alpha \cap \beta| \right) \\
&= \frac{2\mu^2b_{\alpha, \beta} c_\alpha}{\lambda} \xi_\beta \\
&= 2(1-\lambda) b_{\alpha, \beta}\xi_\beta 
\end{align*}
Cancelling the $b_{\alpha, \beta}$ and rearranging once more we find that
\[
0 = \xi_\beta \left(2\lambda -1\right)
\]
and $\lambda = \frac{1}{2}$ is equivalent to $a= \frac{1}{|\alpha|}$, completing the proof.
\end{proof}

\subsection*{Calculation of $\nu^p_\pm \star \_$}

We begin by performing calculating the products of the basis elements here as these calculations are needed for finding the fusion law, but will also be useful elsewhere.

\begin{lemma}\label{nu*nucalc}
Let $\beta, \gamma \in C$ such that $\beta \neq \alpha, \alpha^c$, $\gamma \neq \beta, \beta^c, \alpha+\beta, \alpha + \beta^c$ and $\epsilon, \iota = \pm$.
\begin{enumerate}
\item[$1.$] $w^\beta_\epsilon w^\beta_\iota = \theta^\beta_\epsilon \theta^\beta_\iota c_\beta t_\beta + c_{\alpha+\beta} t_{\alpha+\beta} + b_{\beta, \alpha+\beta}(\theta^\beta_\epsilon + \theta^\beta_\iota) e^\alpha$ which is generically in $A_1 \oplus A_0 \oplus A_\lambda \in A_{\lambda-\frac{1}{2}}$

\item[$2.$] $w^\beta_\epsilon w^{\beta^c}_\iota =(b_{\beta, \alpha+\beta^c}\theta^\beta_\epsilon - b_{\beta^c, \alpha+\beta}\theta^\beta_{-\iota}) e^{\alpha^c} \in A_0$

\item[$3.$] $w^\beta_\epsilon w^\gamma_\iota = (\theta^\beta_\epsilon \theta^\gamma_\iota b_{\beta,\gamma} + b_{\alpha+\beta, \alpha+\gamma})e^{\beta+\gamma} + (\theta^\beta_\epsilon b_{\beta, \alpha + \gamma} + \theta^\gamma_\iota b_{\alpha+\beta, \gamma})e^{\alpha+\beta+\gamma}$, which is generically in $A_{\nu^{p(\beta+\gamma)}_+} \oplus A_{\nu^{p(\beta+\gamma)}_-}$
\end{enumerate}
\end{lemma}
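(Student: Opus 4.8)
The plan is to compute each of the three products directly from the multiplication table in Definition~\ref{CodeAlgebra}, expanding $w^\beta_\epsilon = \theta^\beta_\epsilon e^\beta + e^{\alpha+\beta}$ and the analogous expression for the second factor, collecting terms, rewriting the $\theta$'s via Lemma~\ref{coefsubs}, and finally reading off eigenspace membership from Table~\ref{tab:esp}. Since the statement is of the form ``the product equals (explicit expression), which lies in (sum of eigenspaces)'', the work splits into a bookkeeping step (producing the formula) and a membership step.

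Before the computations I would record three membership facts, all immediate from Table~\ref{tab:esp} together with $\theta^\delta_+ \neq \theta^\delta_-$: (i) the span of $\{ t_i : i \in \supp(\alpha)\} \cup \{ e^\alpha \}$ equals $A_1 \oplus A_\lambda \oplus A_{\lambda-\frac{1}{2}}$, by comparing dimensions against Table~\ref{tab:esp}; (ii) each $t_i$ with $i \notin \supp(\alpha)$, and $e^{\alpha^c}$ when $\1 \in C$, lies in $A_0$; and (iii) for every $\delta \in C^* \setminus \{ \alpha, \alpha^c\}$ the pair $\{ e^\delta, e^{\alpha+\delta}\}$ spans a subspace of $A_{\nu^{p(\delta)}_+} \oplus A_{\nu^{p(\delta)}_-}$, since $w^\delta_+$ and $w^\delta_-$ form a basis of that space. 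Splitting $t_\beta = \sum_{i \in \supp(\beta)} t_i$ according to whether $i \in \supp(\alpha)$, facts (i) and (ii) give $t_\beta, t_{\alpha+\beta} \in A_1 \oplus A_0 \oplus A_\lambda \oplus A_{\lambda-\frac{1}{2}}$, which handles the right-hand side of part~1.

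For part~1, expanding $w^\beta_\epsilon w^\beta_\iota$ and using $e^\beta e^\beta = c_\beta t_\beta$, $e^{\alpha+\beta} e^{\alpha+\beta} = c_{\alpha+\beta} t_{\alpha+\beta}$, and $e^\beta e^{\alpha+\beta} = b_{\beta,\alpha+\beta} e^\alpha$ (valid since $\beta \neq \alpha+\beta$ and $\beta \neq (\alpha+\beta)^c$, the latter because $\alpha \neq \1$) yields the stated formula, whose membership follows from the observations above together with $e^\alpha \in A_1 \oplus A_{\lambda-\frac{1}{2}}$. For part~2, one writes $w^{\beta^c}_\iota = \theta^{\beta^c}_\iota e^{\beta^c} + e^{\alpha+\beta^c}$, observes that $\alpha+\beta^c = (\alpha+\beta)^c$ so that two of the four products vanish ($e^\beta e^{\beta^c}=0$ and $e^{\alpha+\beta}e^{(\alpha+\beta)^c}=0$) while the other two are $b$-multiples of $e^{\alpha^c}$; rewriting $\theta^{\beta^c}_\iota = -\theta^\beta_{-\iota}$ via Lemma~\ref{coefsubs} and using commutativity $b_{\alpha+\beta,\beta^c} = b_{\beta^c,\alpha+\beta}$ gives the formula, with $e^{\alpha^c} \in A_0$ by fact (ii). For part~3, the hypotheses $\gamma \neq \beta, \beta^c, \alpha+\beta, \alpha+\beta^c$ are precisely what is needed to verify that none of $e^\beta e^\gamma$, $e^\beta e^{\alpha+\gamma}$, $e^{\alpha+\beta}e^\gamma$, $e^{\alpha+\beta}e^{\alpha+\gamma}$ is of the ``square'' or ``complementary pair'' type, so each is a single $b$-multiple of $e^{\beta+\gamma}$ or of $e^{\alpha+\beta+\gamma}$; collecting terms gives the displayed formula, and since the same hypotheses force $\beta+\gamma \in C^* \setminus \{ \alpha, \alpha^c\}$, fact (iii) with $\delta = \beta+\gamma$ gives containment in $A_{\nu^{p(\beta+\gamma)}_+} \oplus A_{\nu^{p(\beta+\gamma)}_-}$.

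The only genuinely delicate point is the complement bookkeeping in parts~2 and~3: one must repeatedly translate a hypothesis such as ``$\gamma \neq \alpha+\beta^c$'' into the shape required by the multiplication table, e.g.\ ``$\beta \neq (\alpha+\gamma)^c$'', using $x^c = \1 + x$ and $\alpha \neq \0, \1$. I would also note explicitly that when $\1 \notin C$ the symbols $\beta^c$, $\alpha^c$ and their basis vectors do not occur, so part~2 is vacuous and the corresponding terms are simply absent from parts~1 and~3, leaving the conclusions unchanged.
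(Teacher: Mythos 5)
Your proposal is correct and follows the same route as the paper, which simply records these as ``straightforward calculations'': you expand the products via the multiplication table in Definition~\ref{CodeAlgebra}, track the complement conditions to decide which case of the $e^\alpha\cdot e^\beta$ rule applies, rewrite $\theta^{\beta^c}_\iota=-\theta^\beta_{-\iota}$ via Lemma~\ref{coefsubs}, and read off eigenspace membership from Table~\ref{tab:esp}. The extra care you take with the dimension count for the span of $\{t_i: i\in\supp(\alpha)\}\cup\{e^\alpha\}$ and with the verification that $\beta+\gamma\in C^*\setminus\{\alpha,\alpha^c\}$ is exactly the bookkeeping the paper leaves implicit.
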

\begin{proof}
These are straightforward calculations.
\end{proof}

\begin{lemma}\label{nu*nu}
Let $p,q \in P_\alpha$ be two different weight partitions of $\alpha$ and $\epsilon, \iota = \pm$.  Generically we have the following:
\begin{enumerate}
\item[$1.$] $\nu^p_\epsilon \star \nu^q_\iota = N(p,q)$,
\item[$2.$] $\nu^p_\epsilon \star \nu^p_\iota = N(p,p) \cup \{1,0,\lambda, \lambda - \frac{1}{2} \}$.
\end{enumerate}
where
\begin{align*}
N(p,q) := \{ \nu^{p(\beta + \gamma)}_+, \nu^{p(\beta + \gamma)}_- &: \beta \in C_\alpha'(p), \gamma \in C_\alpha'(q), \\
& \qquad \gamma \neq \beta, \beta^c, \alpha+\beta, \alpha+\beta^c \}.
\end{align*}

However, if $b_{\beta, \alpha+\beta^c} = b_{\beta^c, \alpha+\beta}$ and $c_\beta = c_{\alpha + \beta}$ for all $\beta \in C_\alpha(p)$, then for $p = ( \frac{|\alpha|}{2}, \frac{|\alpha|}{2} )$, 
\[
\nu^p_\epsilon \star \nu^p_\iota = \begin{cases}
N(p,p) \cup \{1,0,\lambda- \frac{1}{2} \} & \mbox{if } \epsilon = \iota \\
N(p,p) \cup \{\lambda \} & \mbox{if } \epsilon = -\iota
\end{cases}
\]
\end{lemma}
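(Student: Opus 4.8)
Since the $\nu^p_\epsilon$-eigenspace of $e$ has basis $\{ w^\beta_\epsilon : \beta \in C_\alpha'(p) \}$, the fusion rule $\nu^p_\epsilon \star \nu^q_\iota$ is governed by the products $w^\beta_\epsilon \cdot w^\gamma_\iota$, all of which have already been computed in Lemma~\ref{nu*nucalc}. The plan is therefore to run through these products, for $\beta \in C_\alpha'(p)$ and $\gamma \in C_\alpha'(q)$, record which eigenspaces they meet, and take the union. For $p \neq q$ this is immediate: since $p(\beta) = p(\beta^c) = p(\alpha+\beta) = p(\alpha+\beta^c) = p \neq q = p(\gamma)$, the codeword $\gamma$ is automatically distinct from $\beta$, $\beta^c$, $\alpha+\beta$ and $\alpha+\beta^c$, so part~3 of Lemma~\ref{nu*nucalc} applies verbatim to every pair and $w^\beta_\epsilon w^\gamma_\iota \in A_{\nu^{p(\beta+\gamma)}_+} \oplus A_{\nu^{p(\beta+\gamma)}_-}$; the union over all pairs is exactly $N(p,q)$.

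For $p = q$ I would split on the position of $\gamma$ relative to $\beta$ inside $C_\alpha'(p)$; as at most one of $\beta$, $\alpha+\beta$ belongs to $C_\alpha'(p)$, the cases are (i) $\gamma = \beta$, handled by Lemma~\ref{nu*nucalc}(1), which puts the product in $A_1 \oplus A_0 \oplus A_\lambda \oplus A_{\lambda - \tfrac{1}{2}}$; (ii) $\gamma \in \{ \beta^c, \alpha + \beta^c\}$, where $w^\gamma_\iota$ is a nonzero scalar multiple of $w^{\beta^c}_\iota$ by Lemma~\ref{nuspace}, so Lemma~\ref{nu*nucalc}(2) puts the product in $A_0$; and (iii) all remaining $\gamma$, handled by Lemma~\ref{nu*nucalc}(3), contributing to $N(p,p)$. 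Taking the union of the three cases gives $\nu^p_\epsilon \star \nu^p_\iota = N(p,p) \cup \{1, 0, \lambda, \lambda - \tfrac{1}{2}\}$, which is part~2.

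The substantive case is $p = (\tfrac{|\alpha|}{2}, \tfrac{|\alpha|}{2})$ together with the extra hypotheses $c_\beta = c_{\alpha+\beta}$ and $b_{\beta, \alpha+\beta^c} = b_{\beta^c, \alpha+\beta}$ for all $\beta \in C_\alpha(p)$. Here $|\alpha| - 2|\alpha \cap \beta| = 0$, so $\xi_\beta = 0$, and Lemma~\ref{xi0} lets us take $\theta^\beta_+ = 1$, $\theta^\beta_- = -1$. Feeding $\theta^\beta_\pm = \pm 1$ and $c_\beta = c_{\alpha+\beta}$ into Lemma~\ref{nu*nucalc}(1), the $e^\alpha$-coefficient $b_{\beta,\alpha+\beta}(\theta^\beta_\epsilon + \theta^\beta_\iota)$ vanishes precisely when $\epsilon = -\iota$ (then $\theta^\beta_\epsilon + \theta^\beta_\iota = -2\xi_\beta = 0$), so $w^\beta_\epsilon w^\beta_\iota$ equals $c_\beta(t_\beta + t_{\alpha+\beta}) \pm 2 b_{\beta,\alpha+\beta} e^\alpha$ when $\epsilon = \iota$ and $c_\beta(t_{\alpha+\beta} - t_\beta)$ when $\epsilon = -\iota$. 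The key elementary identities are $t_\beta + t_{\alpha+\beta} = t_\alpha + 2 \sum_{i \in \supp(\beta) \setminus \supp(\alpha)} t_i \in A_1 \oplus A_0 \oplus A_{\lambda - \tfrac{1}{2}}$ (using $t_\alpha, e^\alpha \in A_1 \oplus A_{\lambda - \tfrac{1}{2}}$ and $t_i \in A_0$ for $i \notin \supp(\alpha)$, from Table~\ref{tab:esp}) and $t_{\alpha+\beta} - t_\beta = \sum_{i \in \supp(\alpha) \setminus \supp(\beta)} t_i - \sum_{i \in \supp(\alpha) \cap \supp(\beta)} t_i \in A_\lambda$ (its coefficients are supported on $\supp(\alpha)$ and sum to $\tfrac{|\alpha|}{2} - \tfrac{|\alpha|}{2} = 0$). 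For $\gamma \in \{\beta^c, \alpha+\beta^c\}$, substituting $\theta^\beta_\pm = \pm 1$ and the hypothesis $b_{\beta,\alpha+\beta^c} = b_{\beta^c,\alpha+\beta}$ into Lemma~\ref{nu*nucalc}(2) (and using the scaling of Lemma~\ref{nuspace} for $\alpha+\beta^c$) gives a multiple of $(\theta^\beta_\epsilon - \theta^\beta_{-\iota}) e^{\alpha^c}$, which vanishes exactly when $\epsilon = -\iota$ and otherwise lies in $A_0$; and the case-(iii) contributions to $N(p,p)$ are unchanged. Uniting everything: when $\epsilon = \iota$ one gets $N(p,p) \cup \{1, 0, \lambda - \tfrac{1}{2}\}$, and when $\epsilon = -\iota$ one gets $N(p,p) \cup \{\lambda\}$, as claimed.

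I expect the main obstacle to be this last step: getting the eigenspace decomposition of the toral combinations $t_\beta \pm t_{\alpha+\beta}$ right --- in particular, seeing that the $A_\lambda$-summand is present exactly when $\epsilon = -\iota$ and absent when $\epsilon = \iota$ --- while simultaneously keeping track of the complementary vanishing of the $e^\alpha$- and $e^{\alpha^c}$-coefficients coming from Lemma~\ref{nu*nucalc}(1) and (2). The two generic parts, by contrast, are just a matter of checking which of the three sub-cases of Lemma~\ref{nu*nucalc} applies to each pair $(\beta, \gamma)$.
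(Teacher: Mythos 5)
Your proposal is correct and follows essentially the same route as the paper: reduce everything to the basis-element products of Lemma \ref{nu*nucalc}, use Lemma \ref{nuspace} to collapse the exceptional choices of $\gamma$ to the two cases $\gamma = \beta$ and $\gamma = \beta^c$, and in the special case $p = (\tfrac{|\alpha|}{2}, \tfrac{|\alpha|}{2})$ substitute $\theta^\beta_\pm = \pm 1$ and track how the toral combinations $t_\beta \pm t_{\alpha+\beta}$ and the $e^\alpha$, $e^{\alpha^c}$ coefficients distribute over the eigenspaces. Your write-up is in fact somewhat more explicit than the paper's (notably the identities for $t_\beta + t_{\alpha+\beta}$ and $t_{\alpha+\beta} - t_\beta$), but the argument is the same.
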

\begin{proof}
To begin, let $\gamma \neq \beta, \beta^c, \alpha+\beta, \alpha+\beta^c$.  By Lemma \ref{nu*nucalc}, we have $N(p,q) \subseteq \nu^p_\epsilon \star \nu^q_\iota$ generically.  

We note that the remaining cases for $\gamma$ all have $p(\gamma) = p(\beta)$. Now, by Lemma \ref{nuspace}, $w^{\alpha+\beta}_\pm$ is a scalar multiple of $w^\beta_\pm$, and $w^{\alpha+\beta^c}_\pm$ is a scalar multiple of $w^{\beta^c}_\pm$.  So we are left with two cases: $\gamma = \beta$ and $\gamma = \beta^c$.  Again by Lemma \ref{nu*nucalc}, these are generically in $\{1,0,\lambda, \lambda - \frac{1}{2} \}$.

Now suppose that the conditions on the structure parameters hold and $p = (\frac{|\alpha|}{2}, \frac{|\alpha|}{2} )$. So, by Lemma \ref{xi0}, we may assume $\theta^\beta_\epsilon = \epsilon$. Consider $w^\beta_\epsilon w^\beta_\iota$ from part (1) of Lemma \ref{nu*nucalc}.  Note that $t_\beta = t_{\alpha \cap \beta} + t_{\alpha^c \cap \beta}$ and similarly for $t_{\alpha+\beta}$. So, as $\alpha^c \cap \beta = \alpha^c \cap (\alpha+\beta)$, if $\epsilon = \iota$, the coefficients of the $t_i$ for $i \in \supp(\alpha)$ are all equal.  Hence, the product is in $A_1 \oplus A_0 \oplus A_{\lambda - \frac{1}{2}}$.

Similarly, if $\epsilon = -\iota$, then, for all $i \notin \supp(\alpha)$, the $t_i$ and $e^\alpha$ terms cancel and we see that $w^\beta_{+}w^\beta_{-}$ is in $A_\lambda$.  Again, by the assumptions on the structure parameters and part (2) of Lemma \ref{nu*nucalc}, we see that $w^\beta_{+}w^{\beta^c}_{-} = 0$, therefore the result follows.
\end{proof}

\section{Axial algebras and examples}\label{sec:examples}

We wish to generate our non-degenerate code algebra $A_C$ by idempotents and hence show that it is an axial algebra.  In order to do this, we consider the small idempotents obtained from a set $S = \{ \alpha_1, \dots, \alpha_l\}$, where each $\alpha_i$ is a conjugate of $\alpha$ under $\Aut(C)$.  Note that, since the $\alpha_j$ are conjugate, the weight sets $P_{\alpha_j}$ and $P_{\alpha_k}$ are equal for $j,k = 1, \dots l$.

\begin{theorem}\label{codeisaxialalg}
Let $C$ be a projective code and $\alpha \in C$ such that the set $S = \{ \alpha_1, \dots, \alpha_l\}$ of conjugates of $\alpha$ under $\Aut(C)$ generates the code.  Suppose that the characteristic of $\mathbb{F}$ is not $2$, or dividing $|\alpha|$, and the structure parameters $\Lambda = \{ a_{i, \beta}, b_{\beta, \gamma}, c_{i, \beta}\}$ are such that
\begin{align*}
a  &:= a_{i, \beta} &&\mbox{for all } i \in \supp(\beta), \beta \in C^*\\
b_{\alpha_j, \beta} &= b_{\alpha_k, \gamma} &&\mbox{for all } \beta \in C_{\alpha_j}(p), \gamma \in C_{\alpha_k}(p), p \in P_\alpha \\
b_{\alpha_j^c, \beta} &= b_{\alpha_k^c, \gamma} &&\mbox{for all } \beta \in C_{\alpha_j}(p), \gamma \in C_{\alpha_k}(p), p \in P_\alpha \\
c_\beta &:= c_{i, \beta} &&\mbox{for all } i \in \supp(\beta), \beta \in C^*
\end{align*}

Then, the code algebra $A_C(\Lambda)$ is an axial algebra with respect to the small idempotents and has fusion law given in Table $\ref{tab:small}$.
\end{theorem}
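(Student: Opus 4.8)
The plan is to combine the eigenvalue, eigenvector and fusion-law computations of Sections~\ref{sec:eigen} and~\ref{sec:fusion} with a generation argument, so that the only genuinely new content is showing that the small idempotents attached to $S$ already generate all of $A_C$. First I would note that, since the $\alpha_j$ are conjugate, $|\alpha_j| = |\alpha|$ for all $j$ and the hypotheses on $\Lambda$ make the $a$, $b$ and $c$ structure parameters constant on each subcode $\langle \alpha_j \rangle$ and compatible across the different conjugates. Hence, for each $j$, the $s$-map construction of Proposition~\ref{smap} yields the two small idempotents $e^{(j)}_\pm = \lambda t_{\alpha_j} \pm \mu e^{\alpha_j}$ (after extending $\mathbb{F}$, if necessary, so that $\mu$ and the relevant $\theta^\beta_\pm$ exist), and, recalling the standing assumptions $a \neq \tfrac{1}{2|\alpha|}, \tfrac{1}{3|\alpha|}$, Proposition~\ref{espaces} shows each $e^{(j)}_\pm$ is primitive and semisimple with eigenspaces as in Table~\ref{tab:esp}, while the fusion-law theorem of Section~\ref{sec:fusion} shows it satisfies the law of Table~\ref{tab:small}. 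The conditions on $b_{\alpha_j,\beta}$ and $b_{\alpha_j^c,\beta}$ are exactly what is needed to guarantee that the eigenvalues $\nu^p_\pm$, and hence the whole fusion law, agree for every $j$ and both signs, so all $2l$ of these idempotents are $\mathcal{F}$-axes for one common fusion law $\mathcal{F}$. It therefore remains to prove that the subalgebra $B \leq A_C$ generated by $\{ e^{(j)}_\pm : j = 1, \dots, l \}$ is the whole algebra.

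Since $\mathrm{char}(\mathbb{F}) \neq 2$ and $\lambda, \mu \neq 0$, the identities $e^{(j)}_+ + e^{(j)}_- = 2\lambda t_{\alpha_j}$ and $e^{(j)}_+ - e^{(j)}_- = 2\mu e^{\alpha_j}$ put $t_{\alpha_j}$ and $e^{\alpha_j}$ into $B$ for every $j$. Next I would show, by induction on $|J|$, that $e^\gamma \in B$ whenever $\gamma = \sum_{j \in J} \alpha_j \in C^*$ for some $J \subseteq \{1, \dots, l\}$; as $S$ generates $C$ over $\mathbb{F}_2$, this covers every $\gamma \in C^*$. The inductive engine is the closure property: if $\delta_1, \delta_2 \in C^*$ with $e^{\delta_1}, e^{\delta_2} \in B$ and $\delta_1 + \delta_2 \in C^*$, then automatically $\delta_1 \neq \delta_2, \delta_2^c$, so $e^{\delta_1} e^{\delta_2} = b_{\delta_1,\delta_2} e^{\delta_1+\delta_2}$ with $b_{\delta_1,\delta_2} \neq 0$ by non-degeneracy, whence $e^{\delta_1+\delta_2} \in B$. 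For the inductive step it then suffices to split $J = J_1 \sqcup J_2$ into non-empty parts with both partial sums in $C^*$ and apply the closure property together with the inductive hypothesis; a short case analysis shows such a split always exists once $|J| \geq 2$ and $\gamma \in C^*$ — for $|J| = 2$ take singletons, and for $|J| \geq 3$ one uses that distinct conjugates $\alpha_k$ cannot all satisfy $\gamma \in \{\alpha_k, \alpha_k^c\}$, so some $k$ with $\gamma \neq \alpha_k, \alpha_k^c$ exists and the split $\{k\} \sqcup (J \setminus \{k\})$ works.

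Having $e^\gamma \in B$ for all $\gamma \in C^*$, I would then recover the toral elements: $(e^\gamma)^2 = c_\gamma t_\gamma$ with $c_\gamma \neq 0$ gives $t_\gamma = \sum_{i \in \supp(\gamma)} t_i \in B$ for every $\gamma \in C^*$, and since $t_\gamma t_\delta = \sum_{i \in \supp(\gamma) \cap \supp(\delta)} t_i$, the subalgebra $B$ contains $\sum_{i \in \bigcap_{\gamma \in S'}\supp(\gamma)} t_i$ for every finite $S' \subseteq C^*$. Now projectivity enters: by Lemma~\ref{Cproj}, for each $i$ there is a set of codewords whose supports intersect exactly in $\{i\}$; we may discard $\0$ from it (the intersection is non-empty) and $\1$ (removing it does not change the intersection), so the set may be taken inside $C^*$, and the corresponding product of $t_\gamma$'s equals $t_i$. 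Hence $t_i \in B$ for all $i = 1, \dots, n$. Together with $e^\gamma \in B$ for all $\gamma \in C^*$ this is a basis of $A_C$, so $B = A_C$; combined with the first paragraph, $A_C(\Lambda)$ is an axial algebra with respect to the small idempotents and has fusion law Table~\ref{tab:small}.

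The main obstacle I expect is the second step, namely assembling every $e^\gamma$ from the $e^{\alpha_j}$ by successive products: one must route the subset sum through codewords in $C^*$ only, never passing through $\0$ or $\1$ and never multiplying $e^\delta$ by $e^{\delta^c}$ (which would annihilate it), and this is precisely where the careful bookkeeping around complements is required. It is also, apart from the final projectivity argument, the only place the hypothesis that $S$ generates $C$ is used; the rest of the proof is bookkeeping on top of the already-established structural results for a single small idempotent.
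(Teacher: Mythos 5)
Your proposal is correct and follows essentially the same route as the paper's proof: subtract the two small idempotents at each $\alpha_j$ to recover $e^{\alpha_j}$, multiply codeword elements to obtain $e^\gamma$ for all $\gamma \in C^*$ using that $S$ generates $C$, square these and use Lemma \ref{Cproj} (projectivity) to isolate each $t_i$, and invoke conjugacy plus the hypotheses on $\Lambda$ to see that all the axes share the fusion law of Table \ref{tab:small}. The only difference is that you spell out the middle step --- the induction on $|J|$ that routes every subset sum through $C^*$ while avoiding $\0$, $\1$ and complementary pairs, and the observation that the codewords in Lemma \ref{Cproj} may be taken in $C^*$ --- which the paper compresses into a single sentence; this is a welcome filling-in of detail rather than a change of method.
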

\begin{proof}
We have two small idempotents $e_\pm := \lambda t_\alpha \pm \mu e^\alpha$ defined by $\alpha$.  By subtracting the two, we obtain (a scalar multiple of) the codeword element $e^\alpha$.  The set $S$ generates the code, so by multiplying the $e^\alpha$ where $\alpha \in S$, we can generate all codeword elements of $A_C$.  Since $C$ is projective, by Lemma \ref{Cproj}, for all $i \in 1, \dots, n$ there exists $\beta_1, \dots, \beta_k \in C$ which are pairwise distinct such that
\[
\{ i \} = \bigcap_{j=1}^k \beta_j
\]
Hence $(e^{\beta_1})^2 \dots (e^{\beta_k})^2$ is a scalar multiple of $t_i$.  Since $S$ is a set of conjugates under the automorphism group of the code, the fusion laws for the small idempotents are the same.
\end{proof}

We now give some examples.  Throughout, we assume that $C$ is a projective code, $S$ is a set of conjugates of some $\alpha \in C$ which generate the code and $A_C$ is non-degenerate.

\subsection{$\vert \alpha \vert = 1$}\label{al=1}

If $|\alpha| = 1$ and a set of conjugates $S$ of $\alpha$ generate $C$, then $C$ must be the full vector space $C= \mathbb{F}^n_2$ and $S$ the set of all weight one vectors. It is clear that the only possible weight partition of $\alpha$ is $p = (0,1)$. Moreover, this exists precisely when $n \geq 3$.  Indeed, we may disregard the case $n=1$ as the algebra is degenerate (in fact, in this case, $A_C \cong \mathbb{F}$).  When $n=2$, there are only $\alpha$ and $\alpha^c$.  So, there does not exists $\beta \in C^* \setminus \{ \alpha, \alpha^c \}$ such that $|\alpha \cap \beta| = 0,1$.  For $n \geq 3$, such a $\beta$ does exist. By Proposition \ref{espaces}, the possible eigenvalues of a small idempotent $e$ are $1$, $0$, $\lambda - \frac{1}{2}$, $\nu^p_+$ and $\nu^p_-$ (note that $\lambda$ does not appear as eigenvalue).  

By Theorem \ref{codeisaxialalg} and Table \ref{tab:small}, $A_C$ is an axial algebra with its fusion law given by Table \ref{Fusion-alpha1} when $n \geq 3$.  When $n=2$, the same table applies if we ignore the $\nu^p_\pm$.

\begin{table}[!htb]
\setlength{\tabcolsep}{5pt}
\renewcommand{\arraystretch}{1.5}
\centering
\begin{tabular}{c|cccccc}
 & $1$ & $0$ & $\lambda-\frac{1}{2}$ & $\nu^{p}_+$ & $\nu^p_-$ \\ \hline
$1$ & $1$ &  & $\lambda-\frac{1}{2}$ & $\nu^{p}_+$ & $\nu^p_-$  \\
$0$ &  & $0$ &   & $\nu^{p}_+$ & $\nu^p_-$ \\
$\lambda-\frac{1}{2}$ & $\lambda-\frac{1}{2}$&   & $1, \lambda-\frac{1}{2}$ & $\nu^{p}_+, \nu^{p}_-$ & $\nu^{p}_+, \nu^{p}_-$ \\
$\nu^{p}_+$ & $\nu^{p}_+$ & $\nu^{p}_+$  & $\nu^{p}_+, \nu^{p}_-$ & $1,0,\lambda-\frac{1}{2}, N(p,p)$ & $1,0,\lambda-\frac{1}{2}, N(p,p)$ \\
$\nu^{p}_-$ & $\nu^{p}_-$ & $\nu^{p}_-$  &  $\nu^{p}_+, \nu^{p}_-$ & $1,0,\lambda-\frac{1}{2}, N(p,p)$ & $1,0,\lambda-\frac{1}{2}, N(p,p)$
\end{tabular}
\caption{Fusion law for $\vert \alpha \vert = 1$}
\label{Fusion-alpha1}
\end{table}

We wish to identify when the fusion law is $\mathbb{Z}_2$-graded.  It is easy to see that $1$ and $0$ must both be in the positive part.  We will assume two results which follow from Section \ref{sec:fusion}, but which are proved later in Section \ref{sec:grading}.  Firstly, $\lambda - \frac{1}{2}$ is also in the positive part if $n \neq 2$ and secondly that $\nu^p_+$ and $\nu^p_-$ have the same grading (Lemma \ref{nusame}).  Here we explore whether these cases actually lead to a non-trivial grading.  We begin by analysing the set $N(p,p)$.

\begin{lemma}\label{al=1N}
$N(p,p) = \emptyset$ if and only if $n = 3$.
\end{lemma}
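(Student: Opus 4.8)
<br>

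The plan is to work directly with the definition of $N(p,p)$ from Lemma~\ref{nu*nu}, specialised to the situation $|\alpha|=1$, $C=\mathbb{F}_2^n$, $p=(0,1)$. Here $N(p,p)$ consists of all weight partitions $\nu^{p(\beta+\gamma)}_\pm$ arising from pairs $\beta\in C_\alpha'(p)$, $\gamma\in C_\alpha'(p)$ with $\gamma\neq\beta,\beta^c,\alpha+\beta,\alpha+\beta^c$. First I would observe that since $|\alpha|=1$, writing $\supp(\alpha)=\{i\}$, the condition $p(\beta)=(0,1)$ just says $i\notin\supp(\beta)$, i.e.\ $\beta_i=0$; and the excluded values $\beta,\beta^c,\alpha+\beta,\alpha+\beta^c$ are exactly the four codewords agreeing with $\beta$ off coordinate $i$ together with their complements. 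So $N(p,p)$ is nonempty precisely when we can find two such $\beta,\gamma$ (both with $i$-th coordinate $0$, both nonconstant, neither equal to $\alpha$ or $\alpha^c$) that are not in this excluded relation, and then $\beta+\gamma$ is itself a codeword whose weight partition $p(\beta+\gamma)$ lies in $P_\alpha$.

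The key counting step: when $C=\mathbb{F}_2^n$, the set of $\beta\in\mathbb{F}_2^n$ with $\beta_i=0$ and $\beta\in C^*\setminus\{\alpha,\alpha^c\}$ has size $2^{n-1}$ minus the constant vectors with $i$-th coordinate $0$ (just $\0$) minus $\{\alpha,\alpha^c\}\cap\{\beta_i=0\}$. Since $\alpha_i=1$, $\alpha$ is excluded automatically, but $\alpha^c$ has $i$-th coordinate $0$, so we remove $\0$ and $\alpha^c$, leaving $2^{n-1}-2$ candidates (for $n\geq 3$; these are the elements of a representative set $C_\alpha'(p)$ after identifying $\beta$ with $\alpha+\beta$, so $C_\alpha'(p)$ has $2^{n-2}-1$ elements). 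For $n=3$ this gives $|C_\alpha'(p)|=1$: there is a unique representative $\beta$, and then every $\gamma$ in $C_\alpha(p)$ equals one of $\beta,\beta^c,\alpha+\beta,\alpha+\beta^c$, so the pair condition $\gamma\neq\beta,\beta^c,\alpha+\beta,\alpha+\beta^c$ can never be met and $N(p,p)=\emptyset$. For $n\geq 4$, $|C_\alpha'(p)|\geq 3$, so I can pick $\beta\in C_\alpha'(p)$ and then a $\gamma\in C_\alpha(p)$ outside the four-element excluded set (there are $2^{n-1}-2\geq 6$ elements in $C_\alpha(p)$ and only $4$ forbidden ones); I then just need to check that the resulting $\beta+\gamma$ actually contributes a genuine weight partition to $N(p,p)$, i.e.\ that $\beta+\gamma\notin\{\0,\1,\alpha,\alpha^c\}$ — but $\beta+\gamma$ has $i$-th coordinate $0$ (both summands do) and is nonzero (since $\gamma\neq\beta$), hence $\beta+\gamma\neq\1,\alpha$ automatically, and by choosing $\gamma\neq\beta^c,\alpha+\beta,\alpha+\beta^c$ we also ensure $\beta+\gamma\neq\0,\1,\alpha^c$ respectively. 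So $N(p,p)\neq\emptyset$ whenever $n\geq 4$.

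Combining, $N(p,p)=\emptyset$ exactly for $n=3$ (the degenerate/small cases $n=1,2$ were already set aside: for $n=2$ there is no $\beta$ at all, so $C_\alpha(p)=\emptyset$ and the $\nu^p_\pm$-eigenspaces do not exist, so the statement is about $n\geq 3$). The main obstacle is purely bookkeeping: carefully tracking which of the four ``twins'' $\beta,\beta^c,\alpha+\beta,\alpha+\beta^c$ are distinct (they are all distinct when $\1\in C$ and $\beta\neq\alpha,\alpha^c$, which holds here), confirming that $C_\alpha'(p)$ as defined has the stated size, and verifying that every admissible pair really does produce a partition lying in $P_\alpha$ rather than an excluded constant codeword. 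None of this is deep, but the edge cases around $n=3$ versus $n=4$ need the count $|C_\alpha'(p)|=2^{n-2}-1$ to be pinned down exactly.
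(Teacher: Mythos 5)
Your overall strategy is the same as the paper's: a pure counting argument. The paper observes that, since $(0,1)$ is the only weight partition when $|\alpha|=1$, $N(p,p)=\emptyset$ if and only if there is no pair $\beta,\gamma\in C^*\setminus\{\alpha,\alpha^c\}$ with $\gamma\neq\beta,\beta^c,\alpha+\beta,\alpha+\beta^c$, which happens if and only if $|C^*\setminus\{\alpha,\alpha^c\}|\leq 4$, i.e.\ if and only if $n\leq 3$. Your version, however, contains a concrete counting slip: choosing as representatives the $\beta$ with $\beta_i=0$ \emph{already} implements the identification $\beta\sim\alpha+\beta$ (exactly one of $\beta$, $\alpha+\beta$ has $i$-th coordinate $0$), so no further halving is allowed; the correct counts are $|C_\alpha(p)|=2^n-4$ and $|C_\alpha'(p)|=2^{n-1}-2$, not $2^{n-2}-1$. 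In particular, for $n=3$ you have $|C_\alpha'(p)|=2$, so your justification ``there is a unique representative $\beta$'' is false. The conclusion for $n=3$ survives, but for a different reason: $|C_\alpha(p)|=4$ and the excluded set $\{\beta,\beta^c,\alpha+\beta,\alpha+\beta^c\}$ consists of four distinct codewords all lying in $C_\alpha(p)$, so it exhausts $C_\alpha(p)$ --- equivalently, the second representative in $C_\alpha'(p)$ is $\beta^c$ or $\alpha+\beta^c$ and is therefore excluded. The $n\geq 4$ half of your argument is sound (any set of more than four elements admits a valid $\gamma$, and your check that $\beta+\gamma\notin\{\0,\1,\alpha,\alpha^c\}$ is exactly what the four exclusions guarantee), although the stated correspondence is garbled: $\gamma\neq\beta^c$ rules out $\beta+\gamma=\1$, and $\gamma\neq\alpha+\beta$ rules out $\beta+\gamma=\alpha$, not what you wrote. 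These are repairable slips rather than a failure of the method, but the $n=3$ case as written rests on a false count.
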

\begin{proof}
When $n = 2$, the weight partition $p$ does not occur.  So, we may assume that $n \geq 3$.  As there is only one partition for $\vert \alpha \vert =1$, we have $N(p,p) = \emptyset$ if and only if there does not exist $\beta, \gamma \in C^* \setminus \{ \alpha, \alpha^c \}$ such that $\gamma \neq \beta, \beta^c, \alpha + \beta, \alpha + \beta^c$. This condition is satisfied if and only if $\vert C^* \setminus \{ \alpha, \alpha^c \} \vert \leq 4$, which happens if and only if $n \leq 3$.  
\end{proof}

\begin{proposition}
The fusion law given by Table $\ref{Fusion-alpha1}$ has a $\mathbb{Z}_2$-grading if and only if $n = 3$, or $n= 2$ and $a = -1$.  In particular, if $n=3$, the $\mathbb{Z}_2$-grading is given by 
\begin{equation}\label{w1-grading1}
A_+ = A_1 \oplus A_0 \oplus A_{\lambda - \frac{1}{2}} \text{  and  } A_- = A_{\nu^p_+} \oplus A_{\nu^p_-}. 
\end{equation}
If $n= 2$ and $a=-1$, we have $\lambda -\frac{1}{2} \star \lambda -\frac{1}{2} = 1$ and the $\mathbb{Z}_2$-grading is given by 
\begin{equation}\label{w1-grading2}
A_+ = A_1 \oplus A_0 \text{  and  } A_- =  A_{\lambda - \frac{1}{2}}.
\end{equation}
\end{proposition}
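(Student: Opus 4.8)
The plan is to work directly with the conditions for a partition $\mathcal{F}=\mathcal{F}_+\sqcup\mathcal{F}_-$ of the eigenvalue set to define a (non-trivial) $\mathbb{Z}_2$-grading, namely
\[
\mathcal{F}_+\star\mathcal{F}_+\subseteq\mathcal{F}_+,\qquad \mathcal{F}_-\star\mathcal{F}_-\subseteq\mathcal{F}_+,\qquad \mathcal{F}_+\star\mathcal{F}_-\subseteq\mathcal{F}_-,
\]
reading all fusion rules off Table~\ref{Fusion-alpha1}. Since $1\star1=1$ and $0\star0=0$, both $1$ and $0$ must lie in $\mathcal{F}_+$. Together with the two facts recalled before the statement --- that $\lambda-\tfrac12\in\mathcal{F}_+$ whenever $n\neq2$, and that $\nu^p_+$ and $\nu^p_-$ always receive the same grade (Lemma~\ref{nusame}) --- this leaves at most one grade to determine, so I would split into the cases $n=2$, $n=3$ and $n\geq4$.

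For $n=2$ the only eigenvalues are $1$, $0$, $\lambda-\tfrac12$ (the partition $p=(0,1)$ does not occur), so the sole candidate for a non-trivial grading is $\mathcal{F}_+=\{1,0\}$, $\mathcal{F}_-=\{\lambda-\tfrac12\}$. This is a grading precisely when $\lambda-\tfrac12\star\lambda-\tfrac12\subseteq\{1,0\}$, and by Lemma~\ref{lambda-1/2*lambda-1/2} with $|\alpha|=1$ this product equals $1$ when $a=-1$, equals $\lambda-\tfrac12$ when $a=1$, and equals $\{1,\lambda-\tfrac12\}$ otherwise; hence the grading exists if and only if $a=-1$, in which case $\lambda-\tfrac12\star\lambda-\tfrac12=1$. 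The remaining containments $1\star(\lambda-\tfrac12)=\lambda-\tfrac12$ and $0\star(\lambda-\tfrac12)=\emptyset$ are automatic, giving~(\ref{w1-grading2}).

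For $n\geq3$ we have $\{1,0,\lambda-\tfrac12\}\subseteq\mathcal{F}_+$, so the only possible non-trivial grading is $\mathcal{F}_+=\{1,0,\lambda-\tfrac12\}$, $\mathcal{F}_-=\{\nu^p_+,\nu^p_-\}$. The containments $\mathcal{F}_+\star\mathcal{F}_+\subseteq\mathcal{F}_+$ and $\mathcal{F}_+\star\mathcal{F}_-\subseteq\mathcal{F}_-$ are immediate from the first three rows of Table~\ref{Fusion-alpha1} (using Lemmas~\ref{lambda-1/2*lambda-1/2} and~\ref{lambda-1/2*nu}), so the only real condition is $\mathcal{F}_-\star\mathcal{F}_-\subseteq\mathcal{F}_+$, i.e.\ that $\nu^p_\epsilon\star\nu^p_\iota=\{1,0,\lambda-\tfrac12\}\cup N(p,p)$ avoids $\nu^p_+,\nu^p_-$. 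Since $p=(0,1)$ is the unique weight partition of $\alpha$, every admissible sum $\beta+\gamma$ again satisfies $p(\beta+\gamma)=p$, so $N(p,p)=\{\nu^p_+,\nu^p_-\}$ whenever it is non-empty, and it is empty exactly when $n=3$ by Lemma~\ref{al=1N}. Therefore for $n=3$ the partition above is a genuine non-trivial grading, yielding~(\ref{w1-grading1}), while for $n\geq4$ we get $\nu^p_\pm\in N(p,p)\subseteq\nu^p_+\star\nu^p_+$, which forbids $\nu^p_\pm$ from lying in $\mathcal{F}_-$ and forces the trivial grading.

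The underlying computations are short, and most of the content is actually in the auxiliary lemmas (\ref{al=1N} and~\ref{nusame}); the point needing care is the interaction between the abstract fusion labels and coincidences of eigenvalues (for instance $\lambda-\tfrac12=0$ when $a=1$, or $\nu^p_+=\nu^p_-$), which is why one argues throughout with the labelled form of the fusion law and why the equality of the grades of $\nu^p_+$ and $\nu^p_-$ must be imported from Lemma~\ref{nusame} rather than deduced separately. The second mild subtlety --- that $N(p,p)$ is all of $\{\nu^p_+,\nu^p_-\}$, not a proper subset, whenever non-empty --- is immediate for $|\alpha|=1$ from the uniqueness of the weight partition, and is the only place where I would need to argue rather than quote.
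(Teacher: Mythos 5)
Your treatment of the cases $n=2$ and $n=3$ is correct and follows the same route as the paper. The gap is in the case $n\geq 4$. There you argue that $\nu^p_\pm\in N(p,p)\subseteq\nu^p_+\star\nu^p_+$ and conclude immediately that no grading exists. But the entries of Table \ref{Fusion-alpha1} (and of Table \ref{tab:small} from which it is derived) are only \emph{generic}: as the remark in Section \ref{sec:fusion} makes explicit, for special values of the structure parameters the actual fusion rule satisfied by the idempotent may be a proper subset of the tabulated entry. Concretely, by Lemma \ref{nu*nucalc}(3) the product $w^\beta_\epsilon w^\gamma_\iota$ is a combination of $e^{\beta+\gamma}$ and $e^{\alpha+\beta+\gamma}$ whose coefficients involve the $\theta$'s and the $b$ structure parameters, and one must rule out that these coefficients vanish for \emph{every} admissible pair $\beta,\gamma$ and every choice of signs --- if they did, the actual fusion law would avoid $\nu^p_\pm$ and a grading would exist after all. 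Membership of $\nu^p_\pm$ in the label set $N(p,p)$ does not by itself preclude this.

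The paper closes exactly this loophole: for $n\geq4$ it exhibits a specific pair $\beta,\gamma\in C_\alpha(p)$ with $|\beta|=|\gamma|=2$ and $|\alpha\cap\beta|=|\alpha\cap\gamma|=1$, notes that $\alpha+\beta$ and $\alpha+\gamma$ have weight $1$ and are therefore conjugate to $\alpha$, so that the hypotheses of Theorem \ref{codeisaxialalg} force $b_{\alpha+\beta,\gamma}=b_{\alpha+\gamma,\beta}$, and then computes the coefficient of $e^{\alpha+\beta+\gamma}$ in $w^\beta_+w^\gamma_-$ to be $b_{\alpha+\beta,\gamma}(\theta^\beta_++\theta^\gamma_-)=-2\xi_\beta b_{\alpha+\beta,\gamma}\neq 0$ since $\xi_\beta\neq0$ for $p=(0,1)$. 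This nonvanishing is the substantive content of the $n\geq4$ case and is missing from your argument; without it the ``only if'' direction of the proposition is not established.
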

\begin{proof}
As noted above, it is clear that $1$ and $0$ must be in the positive part.  We also assume that $\lambda - \frac{1}{2}$ is in the positive part if $n \neq 2$ and that $\nu^p_+$ and $\nu^p_-$ have the same grading.

Suppose that $n=2$. Then, the $p=(0,1)$ partition doesn't occur and the only possible grading is when $\lambda - \frac{1}{2}$ is in the negative part.  By Lemma \ref{lambda-1/2*lambda-1/2}, $\lambda - \frac{1}{2} \in \lambda - \frac{1}{2} \star \lambda - \frac{1}{2}$ if and only if $a \neq - \frac{1}{\vert \alpha \vert} = -1$. Hence, we also must have $a = -1$ and the grading is that given in (\ref{w1-grading2}).

For $n = 3$, $N(p,p) = \emptyset$ by Lemma \ref{al=1N}, and it is routine to check that (\ref{w1-grading1}) is a $\mathbb{Z}_2$-grading. 

Finally, if $n \geq 4$, then generically $N(p,p) \neq \emptyset$. However, we must check whether special values of the structure parameters could give $\nu^p_\pm \notin \nu^p_\epsilon \star \nu^p_\iota$ and hence a valid grading.  Assume for a contradiction that they do.  In particular, for all distinct $\beta, \gamma \in C_\alpha(p)$, we must have $w^\beta_+ w^\gamma_- = 0$.  However, since $n \geq 4$, there exists distinct $\beta, \gamma \in C_\alpha(p)$ such that $\gamma \neq \beta, \beta^c, \alpha + \beta, \alpha+ \beta^c$, $|\beta| = |\gamma| = 2$ and $|\alpha \cap \beta| = |\alpha \cap \gamma| = 1$.  From Lemma \ref{nu*nucalc}, we have
\[
w^\beta_+ w^\gamma_- = (\theta^\beta_+ \theta^\gamma_- b_{\beta,\gamma} + b_{\alpha+\beta, \alpha+\gamma})e^{\beta+\gamma} + (\theta^\beta_+ b_{\beta, \alpha + \gamma} + \theta^\gamma_- b_{\alpha+\beta, \gamma})e^{\alpha+\beta+\gamma}
\]
Since we assume this is zero, in particular we require $\theta^\beta_+ b_{\beta, \alpha + \gamma} + \theta^\gamma_- b_{\alpha+\beta, \gamma} = 0$.  However, $\alpha+\beta$ and $\alpha+\gamma$ both have weight $1$ and hence are conjugate to $\alpha$.  Moreover, $\gamma \neq \alpha + \beta, \alpha+ \beta^c$ and so $\gamma \in C_{\alpha+\beta}(p)$; similarly $\beta \in C_{\alpha+\gamma}(p)$.  So, by our assumptions on the structure parameters, $b_{\alpha+\beta, \gamma} = b_{\alpha+\gamma, \beta}$ and we have
\[
0 = b_{\alpha+\beta, \gamma}(\theta^\beta_+ + \theta^\gamma_-) = -2 \xi_\beta b_{\alpha+\beta, \gamma}
\]
since $\theta^\gamma_\pm = \theta^\beta_\pm$.  However, $\xi_\beta \neq 0$, a contradiction.  Hence for $n \geq 4$, there is no non-trivial $\mathbb{Z}_2$-grading.
\end{proof}

\subsection{$C$ is a direct sum of even weight codes}\label{al=2}

Let $C$ be a direct sum of even weight subcodes $C_i$ and $|\alpha| = 2$.  That is, the $C_i$ are the codimension one subcodes of some $\mathbb{F}_2^{m_i}$ which contain all the codewords of even length.  Since we are assuming that $C$ is generated by conjugates of $\alpha$, the the lengths $m_i$ of the $C_i$ must all be the same.  Let this be $m$ and $n = m^r$. Thus,
\[
C = \bigoplus_{i = 1}^r C_i \qquad \mbox{ where } C_i \mbox{ all have length } m
\]
Since we also assume that $C$ is projective, this means that $ n \geq m \geq 3$.  Clearly, the partition $(1,1)$ always exists. The partition $(0,2)$ generally exists, but there are some small degenerate cases in which it does not. Namely, when $n = m = 3, 4$ and the only weight partition of $\alpha$ is $(1,1)$.  Apart from this degenerate case, we have $m \geq 3$ and $n \geq 5$ and exactly two weight partitions, $(0,2)$ and $(1,1)$.  For ease of notation, we will label these by $0=(0,2)$ and $1=(1,1)$

We now consider what the sets $N(p,q)$ are for the different weight partitions $p$ and $q$.

\begin{lemma}\label{al2N1}
Generically, we have
\[
N(1, 1) = \begin{cases}
\emptyset & \mbox{if }  n= m=3, 4 \\
\nu^{0}_+, \nu^{0}_- & n \geq 5
\end{cases}
\]
\end{lemma}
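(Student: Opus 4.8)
The plan is to unpack the definition of $N(1,1)$ directly. Since $|\alpha| = 2$ and each $C_i$ has only even-weight codewords, the support of $\alpha$ lies inside a single block (otherwise $\alpha$ would have a block-component of odd weight $1$); after relabelling we may assume $\supp(\alpha) = \{1,2\} \subseteq C_1$, and recall that here $P_\alpha \subseteq \{(0,2),(1,1)\}$ with $0 = (0,2)$ and $1 = (1,1)$. The first step is a parity count: if $\beta,\gamma \in C_\alpha(1)$ then each of $\beta,\gamma$ meets $\{1,2\}$ in exactly one coordinate, so $\beta + \gamma$ meets $\{1,2\}$ in $0$ or $2$ coordinates, and in both cases $p(\beta+\gamma) = (0,2)$ as an unordered pair. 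Moreover $\beta + \gamma$ fails to lie in $C^* \setminus \{\alpha,\alpha^c\}$ only when $\beta+\gamma \in \{\0,\alpha,\1,\alpha^c\}$, that is when $\gamma \in \{\beta,\alpha+\beta,\beta^c,\alpha+\beta^c\}$ — exactly the pairs excluded in the definition of $N(1,1)$. Hence for every admissible pair $\beta+\gamma$ is a genuine codeword element of weight partition $(0,2)$, which already gives $N(1,1) \subseteq \{\nu^0_+,\nu^0_-\}$; and by Lemma \ref{nuspace} each $w^\beta_\epsilon$ changes only by a scalar when $\beta$ is replaced by $\alpha+\beta$, so the choice of transversal $C_\alpha'(1)$ is immaterial.

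For the degenerate range $n = m = 3,4$ we use the fact, recorded above, that the only weight partition of $\alpha$ is then $(1,1)$, i.e. $(0,2) \notin P_\alpha$. By the first step, an admissible pair $(\beta,\gamma)$ would have $\beta+\gamma \in C^* \setminus \{\alpha,\alpha^c\}$ with $p(\beta+\gamma) = (0,2) \in P_\alpha$, a contradiction; so there is no admissible pair and $N(1,1) = \emptyset$.

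For $n \geq 5$ it remains to exhibit one admissible pair $\beta,\gamma \in C_\alpha(1)$, since then $\nu^0_+,\nu^0_- \in N(1,1)$ (this is the generic answer of Lemma \ref{nu*nucalc}) and combining with the inclusion above gives equality. As $m \geq 3$, coordinate $3$ lies in $C_1$, so the weight-$2$ word $\beta$ with support $\{1,3\}$ lies in $C_\alpha(1)$. The hypothesis $n \geq 5$ forces $m \geq 5$ or $r \geq 2$, and in either case $C$ contains a weight-$2$ word $\delta$ with support disjoint from $\{1,2,3\}$ — one may take $\supp(\delta) = \{4,5\} \subseteq C_1$ if $m \geq 5$, or $\supp(\delta) = \{m+1,m+2\} \subseteq C_2$ if $r \geq 2$. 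Put $\gamma := \beta + \delta$. Then $\gamma$ meets $\alpha$ in the single coordinate $1$, so $\gamma \in C_\alpha(1)$, and $\beta+\gamma = \delta$ has weight partition $(0,2)$ and lies in $C^* \setminus \{\alpha,\alpha^c\}$. Finally $\gamma$ is distinct from the four forbidden codewords: $|\gamma| = 4$ whereas $|\beta| = |\alpha+\beta| = 2$ and, when $\1 \in C$, $|\beta^c| = |\alpha+\beta^c| = n-2$; these weights coincide with $4$ only when $n = 6$, forcing $(r,m) = (1,6)$, and there the three complement-type candidates are distinguished from $\gamma$ by their supports.

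I expect the last step to be the only real obstacle: one must check in each sub-case that the exhibited $\gamma$ genuinely avoids $\beta,\alpha+\beta,\beta^c$ and $\alpha+\beta^c$, which means tracking the parity of $m$ (this decides whether $\1 \in C$, hence whether complements exist at all) and the borderline length $n = 6$. Everything else is immediate from the parity count of step one together with the definition of $N(\cdot,\cdot)$ and Lemmas \ref{nuspace} and \ref{nu*nucalc}.
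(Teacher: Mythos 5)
Your proof is correct and follows essentially the same strategy as the paper: the parity observation that any admissible sum $\beta+\gamma$ has weight partition $(0,2)$, together with a check of when admissible pairs exist. You are somewhat more thorough than the paper, which for $n=m=3,4$ simply enumerates $C'_\alpha(1)$ rather than invoking the absence of the $(0,2)$ partition, and for $n\geq 5$ merely asserts the existence of a suitable pair where you construct one explicitly and verify it avoids $\beta$, $\alpha+\beta$, $\beta^c$, $\alpha+\beta^c$.
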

\begin{proof}
 If $n=m=3$, then $|C'_\alpha({1})| = 1$ and so $N({1}, {1}) = \emptyset$.  When $n=m=4$, $|C'_\alpha({1})| = 2$, but the sum of the two distinct codewords in $C'_\alpha({1})$ is $\alpha$, or $\alpha^c$.  Hence again $N({1},{1}) = \emptyset$.  If neither of these cases hold, then $n \geq 5$ and there exist two distinct codewords $\beta, \gamma \in C'_\alpha({1})$ such that $\beta+\gamma \neq \alpha, \alpha^c$.  Their sum $\beta+\gamma$ has weight partition $(0,2)$.
\end{proof}

\begin{lemma}\label{al2N2}
Generically, we have
\begin{align*}
N(0, 0) &= \nu^{0}_+, \nu^{0}_-  \\
N(1, 0) &= \nu^{1}_+, \nu^{1}_- .
\end{align*}
\end{lemma}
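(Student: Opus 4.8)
The plan is to read both identities straight off the definition $N(p,q) = \{ \nu^{p(\beta+\gamma)}_+, \nu^{p(\beta+\gamma)}_- : \beta \in C_\alpha'(p),\ \gamma \in C_\alpha'(q),\ \gamma \neq \beta, \alpha+\beta, \beta^c, \alpha+\beta^c \}$, so that the lemma reduces to two purely combinatorial tasks: (i) determine which weight partitions $p(\beta+\gamma)$ can occur as $\beta,\gamma$ range over the admissible pairs, and (ii) exhibit at least one admissible pair in each case, for then both $\nu^{p(\beta+\gamma)}_+$ and $\nu^{p(\beta+\gamma)}_-$ enter $N(p,q)$. I also recall that the four side conditions $\gamma \neq \beta, \alpha+\beta, \beta^c, \alpha+\beta^c$ are exactly what forces $\beta+\gamma \in C^* \setminus \{\alpha,\alpha^c\}$, so any pair meeting them automatically produces a genuine weight partition in $P_\alpha$, and no separate check that $\beta+\gamma$ is legitimate is needed.

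After relabelling coordinates, assume $\supp(\alpha) = \{1,2\}$, both coordinates lying in the first block. For any $\beta,\gamma \in C$ and $i \in \supp(\alpha)$ we have $i \in \supp(\beta+\gamma)$ iff $\beta_i \neq \gamma_i$; since $|\alpha| = 2$, the first entry $|\alpha \cap (\beta+\gamma)|$ of $p(\beta+\gamma)$ is simply the number of coordinates of $\{1,2\}$ on which $\beta$ and $\gamma$ differ. For task (i): if $\beta,\gamma \in C_\alpha'(0)$ then both vanish on $\{1,2\}$, hence agree there, so $p(\beta+\gamma) = (0,2) = 0$, giving $N(0,0) \subseteq \{\nu^0_+,\nu^0_-\}$; if $\beta \in C_\alpha'(1)$ and $\gamma \in C_\alpha'(0)$, then $\beta$ is nonzero on exactly one of $\{1,2\}$ while $\gamma$ vanishes on both, so they differ at exactly one of $\{1,2\}$, giving $p(\beta+\gamma) = (1,1) = 1$ and $N(1,0) \subseteq \{\nu^1_+,\nu^1_-\}$.

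For task (ii) we produce one admissible pair. We are in the non-degenerate regime, so either $r \geq 2$, or $r = 1$ with $m \geq 5$ (the excluded cases $r=1$, $m \in \{3,4\}$ being exactly those where the partition $0$ is absent). For $N(0,0)$: when $r \geq 2$ let $\beta,\gamma$ be the weight-$2$ codewords supported on $\{m+1,m+2\}$ and $\{m+1,m+3\}$ in the second block (legitimate as $m \geq 3$); when $r = 1$ take those supported on $\{3,4\}$ and $\{3,5\}$ (legitimate as $m \geq 5$). In each case $\beta,\gamma \in C_\alpha(0)$, $\beta \neq \gamma$, and $\gamma$ has weight $2$ with support disjoint from $\supp(\alpha)\cup\supp(\beta)$, which forces $\gamma \neq \alpha+\beta$ and, when complements exist, $\gamma \neq \beta^c,\alpha+\beta^c$ by weight comparison, the one residual borderline value (complements present with $n-4=2$, i.e.\ $r=1$, $n=m=6$) being settled by direct inspection. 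For $N(1,0)$ one argues identically with $\beta$ a weight-$2$ codeword meeting $\supp(\alpha)$ in a single point, say supported on $\{1,3\}$, and $\gamma$ chosen with support disjoint from $\supp(\alpha)\cup\supp(\beta)$; here $\alpha+\beta$ has weight $2$ but is still avoided by this disjointness. Finally, replacing a chosen codeword by its partner $\alpha+(\cdot)$ where necessary — which alters neither its membership in $C_\alpha(\cdot)$ nor $p(\beta+\gamma)$ — lets us assume $\beta,\gamma$ lie in the chosen transversals $C_\alpha'(\cdot)$.

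The $\subseteq$ inclusions are immediate from the parity observation, so the only genuine work is task (ii): bounding the weights of $\alpha+\beta$, $\beta^c$ and $\alpha+\beta^c$ away from $|\gamma| = 2$ for the exhibited pairs in the smallest codes. This is the main obstacle, but it is a short finite check, made routine by always choosing $\gamma$ with support disjoint from $\supp(\alpha)\cup\supp(\beta)$ and by the abundance of such $\gamma$ once $C$ is non-degenerate.
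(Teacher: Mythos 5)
Your proof is correct and takes essentially the same route as the paper's (much terser) argument: both rest on the observation that the sum of any admissible pair automatically lands in the predicted weight partition, together with exhibiting at least one admissible pair, which the paper dispatches with ``there exist two distinct codewords\dots'' and ``the second claim is clear''. One small slip in your write-up: for $N(0,0)$ the exhibited $\gamma$ (support $\{m+1,m+3\}$) is \emph{not} disjoint from $\supp(\beta)=\{m+1,m+2\}$ as you claim, but the required exclusions $\gamma\neq\beta,\alpha+\beta,\beta^c,\alpha+\beta^c$ still hold by the weight comparisons you give, so the argument is unaffected.
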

\begin{proof}
If the $(0, 2)$ weight partition exists, then $n \geq 5$ and $m \geq 3$ and there exist two distinct codewords $\beta, \gamma \in C'_\alpha({0})$.  Since their sum also has weight partition $(0,2)$, $N({0}, {0}) = \nu^{0}_+, \nu^{0}_-$.  The second claim is clear.
\end{proof}

Now that we know the $N(p,q)$ sets generically, we calculate the fusion rule for the $\nu^p_\pm$.  We do this in a careful way since some choices of the structure parameters will yield a more symmetric table.

\begin{lemma}\label{al2nu0*nu0}
Let $\epsilon, \iota = \pm 1$, we have
\[
\nu^{0}_\epsilon \star \nu^{0}_\iota = 1, 0, \lambda - \tfrac{1}{2}, \nu^{0}_+, \nu^{0}_-
\]
\end{lemma}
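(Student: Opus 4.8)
The plan is to reduce the computation of $\nu^0_\epsilon \star \nu^0_\iota$ to the eigenvector products recorded in Lemma~\ref{nu*nucalc}, and then to exploit the geometry of the partition $0 = (0,2)$ when $|\alpha| = 2$. Recall from Table~\ref{tab:esp} that $A_{\nu^0_\pm}$ has basis $\{ w^\beta_\pm : \beta \in C_\alpha'(0)\}$, and that $\beta \in C_\alpha(0)$ means $|\alpha \cap \beta| \in \{0, |\alpha|\}$, i.e.\ $\supp(\beta)$ is either disjoint from $\supp(\alpha)$ or contains it; since $\supp(\alpha+\beta)\cap\supp(\alpha) = \supp(\alpha)\setminus\supp(\beta)$, exactly one of the two supports $\supp(\beta), \supp(\alpha+\beta)$ contains all of $\supp(\alpha)$ and the other is disjoint from it. As $|\alpha| = 2$ we are in the case $n \geq 5$, $m \geq 3$ of Section~\ref{al=2} in which the partition $(0,2)$ occurs, both weight partitions of $\alpha$ arise, and combining the second part of Lemma~\ref{nu*nu} with Lemma~\ref{al2N2} gives that, generically, $\nu^0_\epsilon \star \nu^0_\iota = N(0,0) \cup \{1, 0, \lambda, \lambda-\tfrac{1}{2}\} = \{1, 0, \lambda, \lambda-\tfrac{1}{2}, \nu^0_+, \nu^0_-\}$. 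So the real content of the lemma is to show that the eigenvalue $\lambda$ is never present; equivalently, $A_{\nu^0_\epsilon} A_{\nu^0_\iota} \subseteq A_1 \oplus A_0 \oplus A_{\lambda-\tfrac{1}{2}} \oplus A_{\nu^0_+} \oplus A_{\nu^0_-}$ for all $\epsilon, \iota = \pm$.

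To do this I would run through the three parts of Lemma~\ref{nu*nucalc} applied to $\beta, \gamma \in C_\alpha'(0)$, noting first that by Lemma~\ref{nuspace} the cases $\gamma \in \{\alpha+\beta, \alpha+\beta^c\}$ reduce (up to scalar) to $\gamma \in \{\beta, \beta^c\}$. For $\gamma \neq \beta, \beta^c, \alpha+\beta, \alpha+\beta^c$, part~(3) puts $w^\beta_\epsilon w^\gamma_\iota$ in $A_{\nu^{p(\beta+\gamma)}_+} \oplus A_{\nu^{p(\beta+\gamma)}_-}$, which equals $A_{\nu^0_+} \oplus A_{\nu^0_-}$ because $\beta+\gamma$ again has partition $0$; when $\beta^c$ exists, part~(2) puts $w^\beta_\epsilon w^{\beta^c}_\iota$ in $A_0$ (it is a multiple of $e^{\alpha^c}$). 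The decisive case is $\gamma = \beta$, where part~(1) gives
\[
w^\beta_\epsilon w^\beta_\iota = \theta^\beta_\epsilon\theta^\beta_\iota c_\beta t_\beta + c_{\alpha+\beta} t_{\alpha+\beta} + b_{\beta,\alpha+\beta}(\theta^\beta_\epsilon + \theta^\beta_\iota)e^\alpha.
\]
By the first paragraph exactly one of $t_\beta, t_{\alpha+\beta}$ meets $\supp(\alpha)$, and it assigns the \emph{same} coefficient ($\theta^\beta_\epsilon\theta^\beta_\iota c_\beta$ or $c_{\alpha+\beta}$) to each of its two coordinates; the remaining $t_i$ appearing all have $i \notin \supp(\alpha)$, so they lie in $A_0$, while the $\supp(\alpha)$-part (a multiple of $t_\alpha$) together with the $e^\alpha$-term lies in the span of $t_\alpha$ and $e^\alpha$, which by Table~\ref{tab:esp} is $A_1 \oplus A_{\lambda-\tfrac{1}{2}}$. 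Hence $w^\beta_\epsilon w^\beta_\iota \in A_1 \oplus A_0 \oplus A_{\lambda-\tfrac{1}{2}}$ with no $A_\lambda$-component. Collecting the three cases gives the required containment, and with the generic computation of the first paragraph this yields the claimed equality.

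The step I would be most careful about is this last piece of bookkeeping, since it is the only new content. The point is that $A_\lambda$ is exactly the codimension-one subspace of $\langle t_i : i \in \supp(\alpha)\rangle$ on which the coordinate sum vanishes, so the $A_\lambda$-component of $w^\beta_\epsilon w^\beta_\iota$ is zero precisely because the two coordinates of $\supp(\alpha)$ receive equal coefficients — an equality that is forced by $\supp(\beta)$ being disjoint from (or containing) $\supp(\alpha)$, which is what the partition $0$ with $|\alpha| = 2$ means. For the other partition, $1 = (1,1)$, one coordinate of $\supp(\alpha)$ lies in $\supp(\beta)$ and the other in $\supp(\alpha+\beta)$, so they pick up the unequal coefficients $\theta^\beta_\epsilon\theta^\beta_\iota c_\beta$ and $c_{\alpha+\beta}$, the eigenvalue $\lambda$ survives, and $\nu^1_\epsilon \star \nu^1_\iota$ needs the separate, more delicate analysis. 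Note that nothing in the argument uses the signs $\epsilon, \iota$, so it simultaneously handles all four products $\nu^0_\pm \star \nu^0_\pm$.
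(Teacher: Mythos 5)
Your proposal is correct and follows essentially the same route as the paper: reduce to the case $\gamma = \beta$ via Lemma \ref{nu*nucalc} and Lemma \ref{al2N2}, then observe that for the partition $(0,2)$ the two coordinates in $\supp(\alpha)$ receive equal coefficients in $w^\beta_\epsilon w^\beta_\iota$ (since one of $t_\beta$, $t_{\alpha+\beta}$ contains $t_\alpha$ as a summand), so the product lies in $A_1 \oplus A_0 \oplus A_{\lambda-\frac{1}{2}}$ with no $A_\lambda$-component. Your explicit identification of $A_\lambda$ as the sum-zero subspace of $\langle t_i : i \in \supp(\alpha)\rangle$ makes the key cancellation slightly more transparent than the paper's phrasing, but the argument is the same.
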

\begin{proof}
By Lemmas \ref{nu*nucalc} and \ref{al2N2}, we just need to consider $w^\beta_\epsilon w^\beta_\iota$ for $\beta \in C_\alpha (p)$
\[
(\theta_\epsilon^\beta e^\beta + e^{\alpha + \beta}) (\theta_\iota^\beta e^\beta + e^{\alpha + \beta}) = \theta_\epsilon^\beta \theta_\iota^\beta c_\beta t_\beta + c_{\alpha + \beta} t_{\alpha + \beta} + b_{\beta,\alpha+\beta} (\theta_\epsilon^\beta + \theta_\iota^\beta)e^\alpha
\]
which generically is in $A_1 \oplus A_1 \oplus A_\lambda \oplus A_{\lambda - \frac{1}{2}} $.  However, if $p=(0,2)$, without loss of generality, we may assume that $|\alpha \cap \beta| = 0$.  So, $t_{\alpha+\beta} = t_\alpha + t_\beta$.  Hence the coefficients for each $t_i$ where $i \in \supp(\alpha)$ are the same.  Therefore, the above product is in fact contained in  $A_1 \oplus A_1 \oplus A_{\lambda - \frac{1}{2}}$.
\end{proof}

\begin{lemma}\label{al2nu1*nu1}
Generically, we have
\[
\nu^{1}_\epsilon \star \nu^{1}_\iota = \begin{cases}
1,0,\lambda, \lambda-\frac{1}{2}, \nu^{0}_+, \nu^{0}_- & \mbox{if } n \geq 5 \\
1,0,\lambda, \lambda-\frac{1}{2} & \mbox{if } n=m =3 ,4
\end{cases}
\]
If $b_{\beta, \gamma} = b_{\alpha+\beta, \gamma}$ and $c_\beta = c_{\alpha+\beta}$ for all $\beta, \gamma \in C_\alpha({1})$ then,
\[
\nu^{1}_\epsilon \star \nu^{1}_\iota = \begin{cases}
1,0,\lambda-\frac{1}{2}, \nu^{0}_+, \nu^{0}_- & \mbox{if } \epsilon = \iota \mbox{ and } n \geq 5 \\
1,0,\lambda-\frac{1}{2} & \mbox{if } \epsilon = \iota \mbox{ and } n=m =3,4 \\
\lambda & \mbox{if } \epsilon = -\iota
\end{cases}
\]

\end{lemma}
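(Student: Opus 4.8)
The plan is to compute the product $w^1_\epsilon w^1_\iota$ directly, where $w^1_\pm = \theta^\beta_\pm e^\beta + e^{\alpha+\beta}$ for $\beta \in C_\alpha'(1)$, and then analyse when special values of the structure parameters force the result into a smaller subspace. First I would invoke Lemma \ref{nu*nu}: part (2) gives $\nu^1_\epsilon \star \nu^1_\iota = N(1,1) \cup \{1, 0, \lambda, \lambda - \tfrac12\}$ generically, and then Lemma \ref{al2N1} identifies $N(1,1)$ as $\{\nu^0_+, \nu^0_-\}$ when $n \geq 5$ and as $\emptyset$ when $n = m = 3, 4$. This immediately gives the first (generic) displayed formula.

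For the second formula, the key observation is that the weight partition $(1,1)$ satisfies $|\alpha \cap \beta| = 1 = \tfrac{|\alpha|}{2}$, so by Lemma \ref{xi0} we have $\xi_\beta = 0$ and hence $\theta^\beta_\pm = \pm 1$. Thus the hypothesis $b_{\beta,\gamma} = b_{\alpha+\beta,\gamma}$, $c_\beta = c_{\alpha+\beta}$ for all $\beta, \gamma \in C_\alpha(1)$ is exactly the condition appearing in the last part of Lemma \ref{nu*nu} (with $b_{\beta, \alpha+\beta^c} = b_{\beta^c, \alpha+\beta}$ following from the $b$-assumptions on conjugates, since $\alpha+\beta$ and $\alpha+\beta^c$ are weight-$2$ codewords conjugate to $\alpha$ when $|\beta| = 2$). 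Applying that part of Lemma \ref{nu*nu} with $p = (1,1) = (\tfrac{|\alpha|}{2}, \tfrac{|\alpha|}{2})$ gives $\nu^1_\epsilon \star \nu^1_\iota = N(1,1) \cup \{1, 0, \lambda - \tfrac12\}$ if $\epsilon = \iota$ and $N(1,1) \cup \{\lambda\}$ if $\epsilon = -\iota$. Substituting the value of $N(1,1)$ from Lemma \ref{al2N1} in the two cases $n \geq 5$ and $n = m = 3, 4$ then yields all three lines of the second formula — except one must check in the $\epsilon = -\iota$ case that the contribution $N(1,1)$ genuinely drops out, i.e. that $w^\beta_+ w^{\beta^c}_-= 0$ under these parameter conditions, which is precisely the computation carried out at the end of the proof of Lemma \ref{nu*nu}.

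The main obstacle I anticipate is verifying carefully that in the $\epsilon = -\iota$ subcase with $n \geq 5$ the set $N(1,1)$ really is absorbed rather than contributing $\nu^0_\pm$: one needs to track, for $\gamma \neq \beta, \beta^c, \alpha+\beta, \alpha+\beta^c$ in $C_\alpha'(1)$, that $w^\beta_+ w^\gamma_-$ from part (3) of Lemma \ref{nu*nucalc} still lies in $A_{\nu^{p(\beta+\gamma)}_+} \oplus A_{\nu^{p(\beta+\gamma)}_-}$ and does \emph{not} collapse further — but since $p(\beta+\gamma) = (0,2)$ is a single partition here, and the parameter conditions only symmetrise the $\gamma = \beta$ and $\gamma = \beta^c$ diagonal terms, no new cancellation occurs among the off-diagonal $\gamma$, so the $\nu^0_\pm$ actually do appear, meaning the $\epsilon = -\iota$ line should in fact read $\lambda, \nu^0_+, \nu^0_-$ when $n \geq 5$. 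I would re-examine this point against the statement before committing; if the statement as written is correct, then the resolution must be that for $|\alpha| = 2$ every $\gamma \in C_\alpha'(1)$ distinct from $\beta$ and its relatives actually satisfies $\gamma \in \{\beta^c, \alpha+\beta, \alpha+\beta^c\}$ unless an additional partition is available, which is a combinatorial fact about the even-weight direct-sum code that I would prove by a short counting argument on $\supp(\beta) \triangle \supp(\gamma)$.
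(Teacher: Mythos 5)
Your handling of the generic case and of the $\epsilon=\iota$ special case matches the paper: invoke Lemma \ref{nu*nu}(2) together with Lemma \ref{al2N1}, and note that the hypothesis here implies $b_{\beta,\alpha+\beta^c}=b_{\alpha+\beta,\alpha+\beta^c}=b_{\alpha+\beta,\beta^c}$ so that the special case of Lemma \ref{nu*nu} applies with $\theta^\beta_\pm=\pm1$. (One small slip: for $\beta\in C_\alpha(1)$ you only know $|\alpha\cap\beta|=1$, not $|\beta|=2$, so the justification via conjugacy to $\alpha$ is not available; but the identity follows directly from the lemma's own hypothesis applied in each argument of $b$, which is what the paper does.)

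The $\epsilon=-\iota$ case, however, contains a genuine error. You assert that ``the parameter conditions only symmetrise the $\gamma=\beta$ and $\gamma=\beta^c$ diagonal terms, no new cancellation occurs among the off-diagonal $\gamma$,'' and conclude that the line should read $\lambda,\nu^0_+,\nu^0_-$, offering as a fallback a combinatorial claim that no off-diagonal $\gamma$ exists. Both are wrong. The hypothesis $b_{\beta,\gamma}=b_{\alpha+\beta,\gamma}$ is imposed for \emph{all} pairs $\beta,\gamma\in C_\alpha(1)$, not merely for $\gamma$ among $\beta,\beta^c,\alpha+\beta,\alpha+\beta^c$. Combined with symmetry of $b$ it gives $b_{\beta,\gamma}=b_{\alpha+\beta,\gamma}=b_{\beta,\alpha+\gamma}=b_{\alpha+\beta,\alpha+\gamma}$, so by Lemma \ref{nu*nucalc}(3) with $\theta^\beta_\epsilon=\epsilon$ and $\theta^\gamma_{-\epsilon}=-\epsilon$,
\begin{align*}
w^\beta_\epsilon w^\gamma_{-\epsilon}
&=(-b_{\beta,\gamma}+b_{\alpha+\beta,\alpha+\gamma})e^{\beta+\gamma}
+\epsilon\,(b_{\beta,\alpha+\gamma}-b_{\alpha+\beta,\gamma})e^{\alpha+\beta+\gamma}=0
\end{align*}
for every $\gamma\neq\beta,\beta^c,\alpha+\beta,\alpha+\beta^c$. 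This is exactly the computation the paper carries out to finish the proof: the off-diagonal products vanish identically, so $N(1,1)$ contributes nothing and only $\lambda$ (from the diagonal term $w^\beta_\epsilon w^\beta_{-\epsilon}$ in the special case of Lemma \ref{nu*nu}) survives. Your fallback cannot rescue the argument either: for $n\geq 5$ off-diagonal pairs in $C_\alpha'(1)$ genuinely exist --- that is precisely why $N(1,1)=\{\nu^0_+,\nu^0_-\}$ generically --- so the statement is saved by the algebraic cancellation above, not by any counting argument on supports.
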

\begin{proof}
The generic product follows directly from Lemmas \ref{nu*nu} and \ref{al2N1}.  For the special case, using our assumptions on the $b$ structure parameters, observe that
\[
b_{\beta, \alpha+\beta^c} = b_{\alpha+\beta, \alpha+\beta^c} = b_{\alpha+\beta,\beta^c}
\]
So, by the special case of Lemma \ref{nu*nu}, most of the above result follows.  It remains to check $w^\beta_\epsilon w^\gamma_{-\epsilon}$, where $\gamma \neq \beta, \alpha+\beta, \beta^c, \alpha+\beta^c$. Recall that $\theta^\beta_\epsilon = \epsilon$ for $\beta \in C_\alpha({1})$.  By Lemma \ref{nu*nucalc} and our assumptions on the $b_{\beta, \gamma}$,
\begin{align*}
(\epsilon e^\beta + e^{\alpha + \beta}) (-\epsilon e^\gamma + e^{\alpha + \gamma}) &= (- b_{\beta, \gamma} + b_{\alpha + \beta, \alpha+\gamma}) e^{\beta + \gamma} \\
& \qquad + \epsilon (b_{\beta, \alpha+\gamma} - b_{\alpha + \beta, \gamma}) e^{\alpha + \beta + \gamma}\\
&=0\qedhere
\end{align*}
\end{proof}

We have the usual result for $\nu^{1}_{\epsilon} \star \nu^{0}_{\iota}$ generically, but when we make assumptions on the structure parameters, we can get the following.
\begin{lemma}\label{al2nu1*nu0}
If $b_{\beta, \gamma} = b_{\alpha+\beta, \gamma}  = b_{\beta, \alpha+\gamma}$ for all $\beta \in C_\alpha({1})$, $\gamma \in C_\alpha({0})$, then for $\epsilon, \iota = \pm 1$,
\[
\nu^{1}_{\epsilon}  \star  \nu^{0}_{\iota} = \nu^{1}_{\epsilon}
\]
\end{lemma}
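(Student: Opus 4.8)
The plan is a direct computation of the products of basis eigenvectors. By Proposition~\ref{espaces}, the eigenspace $A_{\nu^1_\epsilon}$ has basis $\{w^\beta_\epsilon : \beta \in C_\alpha'(1)\}$ and $A_{\nu^0_\iota}$ has basis $\{w^\gamma_\iota : \gamma \in C_\alpha'(0)\}$, so it suffices to show $w^\beta_\epsilon w^\gamma_\iota \in A_{\nu^1_\epsilon}$ for all such $\beta,\gamma$. First I would record two reductions. Since $|\alpha|=2$ and $p(\beta)=(1,1)$ we have $|\alpha\cap\beta|=1=\tfrac{|\alpha|}{2}$, so $\xi_\beta=0$ and, by Lemma~\ref{xi0}, $\theta^\beta_\pm=\pm1$; thus $w^\beta_\epsilon=\epsilon e^\beta+e^{\alpha+\beta}$. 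Next, replacing $\gamma$ by $\alpha+\gamma$ if necessary (which by Lemma~\ref{nuspace} only rescales $w^\gamma_\iota$ and so does not change which eigenspaces occur), we may assume $\alpha\cap\gamma=\0$; then $\beta+\gamma$ agrees with $\beta$ on $\supp(\alpha)$, so $|\alpha\cap(\beta+\gamma)|=|\alpha\cap\beta|=1$ and $p(\beta+\gamma)=(1,1)$. In particular $\theta^{\beta+\gamma}_\pm=\pm1$, hence $w^{\beta+\gamma}_\epsilon=\epsilon e^{\beta+\gamma}+e^{\alpha+\beta+\gamma}$; also $\beta+\gamma\notin\{\0,\1,\alpha,\alpha^c\}$ (as $\gamma\neq\0,\alpha,\beta,\alpha+\beta$), so $w^{\beta+\gamma}_\epsilon$ is a genuine eigenvector. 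Finally $p(\gamma)\neq p(\beta)$ forces $\gamma\notin\{\beta,\beta^c,\alpha+\beta,\alpha+\beta^c\}$, so Lemma~\ref{nu*nucalc}(3) applies.

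The key point about the structure parameters is that the hypothesis $b_{\beta,\gamma}=b_{\alpha+\beta,\gamma}=b_{\beta,\alpha+\gamma}$, applied also with $\alpha+\beta$ in place of $\beta$ (which again lies in $C_\alpha(1)$), gives $b_{\alpha+\beta,\gamma}=b_{\alpha+\beta,\alpha+\gamma}$, so all four of $b_{\beta,\gamma}$, $b_{\alpha+\beta,\gamma}$, $b_{\beta,\alpha+\gamma}$, $b_{\alpha+\beta,\alpha+\gamma}$ equal one common value $b$. Substituting $\theta^\beta_\epsilon=\epsilon$ and these values into Lemma~\ref{nu*nucalc}(3) yields
\[
w^\beta_\epsilon w^\gamma_\iota = b\bigl(\epsilon\theta^\gamma_\iota+1\bigr)e^{\beta+\gamma} + b\bigl(\epsilon+\theta^\gamma_\iota\bigr)e^{\alpha+\beta+\gamma}.
\]
Since $\epsilon^2=1$, we have $\epsilon\theta^\gamma_\iota+1=\epsilon(\theta^\gamma_\iota+\epsilon)$, so this equals $b(\epsilon+\theta^\gamma_\iota)\bigl(\epsilon e^{\beta+\gamma}+e^{\alpha+\beta+\gamma}\bigr)=b(\epsilon+\theta^\gamma_\iota)\,w^{\beta+\gamma}_\epsilon\in A_{\nu^1_\epsilon}$. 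This gives $\nu^1_\epsilon\star\nu^0_\iota\subseteq\{\nu^1_\epsilon\}$.

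For the reverse inclusion it is enough to produce one nonzero such product: $b\neq0$ by non-degeneracy, and $\theta^\gamma_\iota\neq-\epsilon$ because $\theta^\gamma_\iota\neq\pm1$ (by Lemma~\ref{xi0}, since $|\alpha\cap\gamma|=0\neq\tfrac{|\alpha|}{2}$ forces $\xi_\gamma\neq0$), so $b(\epsilon+\theta^\gamma_\iota)w^{\beta+\gamma}_\epsilon\neq0$; hence $\nu^1_\epsilon\star\nu^0_\iota=\nu^1_\epsilon$. The only real care needed is the bookkeeping in the second step — verifying that the three hypothesised equalities already pin down the fourth coefficient $b_{\alpha+\beta,\alpha+\gamma}$ — after which the identity $\epsilon^2=1$ collapses the product onto the single eigenvector $w^{\beta+\gamma}_\epsilon$ and nothing else is required.
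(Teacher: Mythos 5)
Your proposal is correct and follows essentially the same route as the paper: write $w^\beta_\epsilon=\epsilon e^\beta+e^{\alpha+\beta}$ using $\xi_\beta=0$, apply Lemma \ref{nu*nucalc}(3), use the hypothesis on the $b$'s to factor the product as $b(\theta^\gamma_\iota+\epsilon)w^{\beta+\gamma}_\epsilon$ with $\beta+\gamma\in C_\alpha(1)$. Your extra bookkeeping (deducing $b_{\alpha+\beta,\alpha+\gamma}=b$ from the hypothesis applied to $\alpha+\beta$, and checking $\theta^\gamma_\iota\neq-\epsilon$ so the product is genuinely nonzero) is left implicit in the paper but is correct and slightly more careful.
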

\begin{proof}
Let $\beta \in C_\alpha ({1})$ and $\gamma \in C_\alpha({0})$. Then
\begin{align*}
(\epsilon e^\beta + e^{\alpha + \beta}) ( \theta_\iota^\gamma e^\gamma + e^{\alpha + \gamma} )  &= (\epsilon \theta_\iota^\gamma b_{\beta,\gamma} + b_{\alpha+\beta, \alpha + \gamma}) e^{\beta + \gamma} \\
&\qquad + (\epsilon b_{\beta, \alpha + \gamma} + \theta^\gamma_\iota b_{\gamma, \alpha + \beta}) e^{\alpha + \beta + \gamma} \\
&= (\theta_\iota^\gamma + \epsilon) b_{\beta, \gamma} ( \epsilon e^{\beta + \gamma} + e^{\alpha + \beta + \gamma} )
\end{align*}
by our assumptions on $b_{\beta, \gamma}$.  Since $\beta + \gamma \in C_\alpha({1})$, the above product is in $A_{\nu^{1}_\epsilon}$.
\end{proof}

\begin{proposition}
Let $C =\bigoplus_{i=1}^r C_i$ be the direct sum of even weight codes $C_i$ all of length $m$, $n = m^r$ such that $n \geq 5$ and $m \geq 3$.  Then, $A_C$ is a $\mathbb{Z}_2$-graded axial algebra with
\begin{align*}
A_+ &= A_1 \oplus A_0 \oplus A_\lambda \oplus A_{\lambda - \frac{1}{2}} \oplus A_{\nu^{0}_+} \oplus A_{\nu^{0}_-}  \\
A_- &= A_{\nu^{1}_+} \oplus A_{\nu^{1}_-}
\end{align*}
and fusion law given by Table $\ref{tab:al2}$.
\begin{table}[!htb]
\setlength{\tabcolsep}{5pt}
\renewcommand{\arraystretch}{1.5}
\centering
\begin{tabular}{c|ccccccccc}
 & $1$ & $0$ & $\lambda$ & $\lambda-\frac{1}{2}$ & $\nu^{1}_+$ & $\nu^{1}_-$ & $\nu^{0}_+$ & $\nu^{0}_-$ \\ \hline
$1$ & $1$ &  & $\lambda$ & $\lambda-\frac{1}{2}$ & $\nu^{1}_+$ & $\nu^{1}_-$ & $\nu^{0}_+$ & $\nu^{0}_-$  \\
$0$ &  & $0$  & &    & $\nu^{1}_+$ & $\nu^{1}_-$ & $\nu^{0}_+$ & $\nu^{0}_-$ \\
$\lambda$ & $\lambda$ & & $1, \lambda - \frac{1}{2}$ & & $\nu^{1}_-$  & $\nu^{1}_+$   \\ 
$\lambda-\frac{1}{2}$ & $\lambda-\frac{1}{2}$& &   & $1, \lambda-\frac{1}{2}$ & $\nu^{1}_+$ & $\nu^{1}_-$ & $\nu^{0}_+, \nu^{0}_-$ &  $\nu^{0}_+, \nu^{0}_-$ \\
$\nu^{1}_+$ & $\nu^{1}_+$ & $\nu^{1}_+$  & $\nu^{1}_-$  &  $\nu^{1}_+$ & $X, \lambda$ & $X, \lambda$ &  $\nu^{1}_+, \nu^{1}_- $ &  $\nu^{1}_+, \nu^{1}_-$ \\
$\nu^{1}_-$ & $\nu^{1}_-$ & $\nu^{1}_-$  &  $\nu^{1}_+$ &  $\nu^{1}_-$  & $X, \lambda$  & $X, \lambda$ &  $\nu^{1}_+, \nu^{1}_-$ &  $\nu^{1}_+, \nu^{1}_-$ \\
$\nu^{0}_+$ & $\nu^{0}_+$ & $\nu^{0}_+$  & & $\nu^{0}_+, \nu^{0}_-$ & $\nu^{1}_+, \nu^{1}_-$ & $\nu^{1}_+, \nu^{1}_-$ & $X$ & $X$ \\
$\nu^{0}_-$ & $\nu^{0}_-$ & $\nu^{0}_-$  &  & $\nu^{0}_+, \nu^{0}_-$ & $\nu^{1}_+, \nu^{1}_-$ & $\nu^{1}_+, \nu^{1}_-$  & $X$ & $X$
\end{tabular}
\vspace{5pt}\\
where $X = 1,0, \lambda-\frac{1}{2}, \nu^{0}_+, \nu^{0}_-$
\caption{Fusion law for $\vert \alpha \vert = 2$}
\label{tab:al2}
\end{table}
\end{proposition}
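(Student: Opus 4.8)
The plan is to deduce the axial structure from Theorem~\ref{codeisaxialalg}, specialise the generic fusion law of Table~\ref{tab:small} to this family of codes, and then read off the claimed $\mathbb{Z}_2$-grading from the resulting table.

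First I would check the hypotheses of Theorem~\ref{codeisaxialalg}. Writing $C = \bigoplus_{i=1}^{r} C_i$ with $r$ blocks each of length $m \geq 3$, the code $C$ is projective (in a block-diagonal generating matrix no column is zero and the columns are pairwise distinct), each $C_i$ is spanned by its weight-two codewords so $C$ is generated by its weight-two codewords, and $\Aut(C)$ contains $S_m \wr S_r$, which acts transitively on those codewords; hence the $\Aut(C)$-conjugacy class $S$ of $\alpha$ is precisely this spanning set. Since the structure parameters satisfy the conditions of Theorem~\ref{codeisaxialalg}, that theorem applies, so $A_C$ is an axial algebra with fusion law Table~\ref{tab:small}. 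As noted in the discussion preceding the proposition, for $n \geq 5$ and $m \geq 3$ the codeword $\alpha$ has exactly the two weight partitions $0 = (0,2)$ and $1 = (1,1)$, so $k = 2$ in Table~\ref{tab:small}.

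Next I would specialise each entry. By Lemmas~\ref{al2N1} and~\ref{al2N2} we have $N(1,1) = N(0,0) = \{\nu^{0}_+, \nu^{0}_-\}$ and $N(1,0) = N(0,1) = \{\nu^{1}_+, \nu^{1}_-\}$. The rows and columns indexed by $1$, $0$, $\lambda$ and $\lambda - \frac{1}{2}$ come from the calculations of Section~\ref{sec:fusion} with $|\alpha| = 2$: in particular $\xi_\beta = 0$ for $\beta \in C_\alpha(1)$ by Lemma~\ref{xi0}, whereas the partition $0$ equals $(0,|\alpha|)$, so Lemmas~\ref{lambda*lambda}, \ref{lambda*nu}, \ref{lambda-1/2*nu} and~\ref{0*nu} give $\lambda \star \lambda = \{1, \lambda - \frac{1}{2}\}$, $\lambda \star \nu^{0}_\pm = \emptyset$, $\lambda \star \nu^{1}_\pm = \nu^{1}_\mp$, $(\lambda - \frac{1}{2}) \star \nu^{1}_\pm = \nu^{1}_\pm$, $(\lambda - \frac{1}{2}) \star \nu^{0}_\pm = \{\nu^{0}_+, \nu^{0}_-\}$ and $0 \star \nu^{p}_\pm = \nu^{p}_\pm$ for both $p$. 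For the products of the $\nu^{p}_\pm$ I would take the generic entries: $\nu^{0}_\epsilon \star \nu^{0}_\iota = \{1, 0, \lambda - \frac{1}{2}, \nu^{0}_+, \nu^{0}_-\}$ by Lemma~\ref{al2nu0*nu0} (where $\lambda$ drops out because $t_{\alpha+\beta} = t_\alpha + t_\beta$ for $\beta \in C_\alpha(0)$), $\nu^{1}_\epsilon \star \nu^{1}_\iota = \{1, 0, \lambda, \lambda - \frac{1}{2}, \nu^{0}_+, \nu^{0}_-\}$ by the generic case of Lemma~\ref{al2nu1*nu1}, and $\nu^{1}_\epsilon \star \nu^{0}_\iota = N(1,0) = \{\nu^{1}_+, \nu^{1}_-\}$ by Lemma~\ref{nu*nu}. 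Collecting these entries produces Table~\ref{tab:al2}.

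Finally I would verify the grading. Put $\mathcal{F}_+ = \{1, 0, \lambda, \lambda - \frac{1}{2}, \nu^{0}_+, \nu^{0}_-\}$ and $\mathcal{F}_- = \{\nu^{1}_+, \nu^{1}_-\}$, so that $A_+$ and $A_-$ are the claimed subspaces. Inspecting Table~\ref{tab:al2}: every product of two members of $\mathcal{F}_+$ stays in $\mathcal{F}_+$, every product of a member of $\mathcal{F}_+$ with a member of $\mathcal{F}_-$ lies in $\{\nu^{1}_+, \nu^{1}_-\} = \mathcal{F}_-$, and $\nu^{1}_\epsilon \star \nu^{1}_\iota = \{1, 0, \lambda, \lambda - \frac{1}{2}, \nu^{0}_+, \nu^{0}_-\} \subseteq \mathcal{F}_+$; hence $\{\mathcal{F}_+, \mathcal{F}_-\}$ is a $\mathbb{Z}_2$-grading of the fusion law and $A_C$ is the corresponding $\mathbb{Z}_2$-graded axial algebra. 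The crux I expect is controlling the two diagonal blocks: the grading would collapse if $\nu^{1}_\epsilon \star \nu^{1}_\iota$ or $\nu^{0}_\epsilon \star \nu^{0}_\iota$ were to contain some $\nu^{1}_\gamma$, so everything hinges on Lemma~\ref{al2N1} ($N(1,1) = \{\nu^{0}_\pm\}$) and on Lemmas~\ref{al2N2} and~\ref{al2nu0*nu0} ($N(0,0) = \{\nu^{0}_\pm\}$, together with the vanishing of the $\lambda$-component); once those are established the remainder is bookkeeping.
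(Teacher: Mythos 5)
Your proposal is correct and follows essentially the same route as the paper: invoke Theorem \ref{codeisaxialalg} for the axial structure, specialise Table \ref{tab:small} using Lemmas \ref{lambda*lambda}, \ref{lambda*nu}, \ref{lambda-1/2*nu}, \ref{al2N1}, \ref{al2N2}, \ref{al2nu0*nu0} and \ref{al2nu1*nu1}, and read off the grading from the resulting table. The only difference is that you spell out the verification of the hypotheses of Theorem \ref{codeisaxialalg} and the grading check, which the paper leaves implicit.
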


\begin{proof}
The fusion law is the same as for the general case given in Table \ref{tab:small} except for the following entries.  By Lemma \ref{lambda*lambda}, we have $\lambda\star\lambda = 1, \lambda - \frac{1}{2}$, by Lemma \ref{lambda*nu}, we have $\lambda \star \nu^p_\pm$ and by Lemma \ref{lambda-1/2*nu} we have $\lambda-\frac{1}{2}\star \nu^{1}_\pm$.  By Lemmas \ref{al2N2}, \ref{al2nu0*nu0} and \ref{al2nu1*nu1}, we have the values for the $\nu^p_\pm$.  Once we have the table, it is easy to observe the grading.
\end{proof}

If in addition we make some assumptions about the structure parameters, we get a stronger result.

\begin{proposition}\label{Z2xZ2}
Let $C =\bigoplus_{i=1}^r C_i$ be the direct sum of even weight codes $C_i$ all of length $m$, $n = m^r$ such that $n \geq 5$ and $m \geq 3$.  Let $S$ be the set of conjugates of a weight two codeword $\alpha$ and suppose that $b_{\beta, \gamma} = b_{\alpha_i+\beta, \gamma}$ and $c_\beta = c_{\alpha_i+\beta}$ for all $\beta \in C_{\alpha_i}(1)$, $\alpha_i \in S$ and $\gamma \in C^*\setminus \{\alpha, \alpha^c\}$.  Then, the axial algebra $A_C$ has fusion law given by Table $\ref{tab:al2special}$ and has a $\mathbb{Z}_2 \times \mathbb{Z}_2$-grading given by
\begin{align*}
A_{(0,0)} &= A_1 \oplus A_0 \oplus A_{\lambda - \frac{1}{2}} \oplus A_{\nu^{0}_+} \oplus A_{\nu^{0}_-}  \\
A_{(1,0)} &= A_{\nu^{1}_+}  \\
A_{(0,1)} &= A_{\nu^{1}_-}  \\
A_{(1,1)} &= A_{\lambda}
\end{align*} 

\begin{table}[!htb]
\setlength{\tabcolsep}{7pt}
\renewcommand{\arraystretch}{1.5}
\centering
\begin{tabular}{c|ccccccccc}
 & $1$ & $0$ & $\lambda$ & $\lambda-\frac{1}{2}$ & $\nu^{1}_+$ & $\nu^{1}_-$ & $\nu^{0}_+$ & $\nu^{0}_-$ \\ \hline
$1$ & $1$ &  & $\lambda$ & $\lambda-\frac{1}{2}$ & $\nu^{1}_+$ & $\nu^{1}_-$ & $\nu^{0}_+$ & $\nu^{0}_-$  \\
$0$ &  & $0$  & &    & $\nu^{1}_+$ & $\nu^{1}_-$ & $\nu^{0}_+$ & $\nu^{0}_-$ \\
$\lambda$ & $\lambda$ & & $1, \lambda - \frac{1}{2}$ & & $\nu^{1}_-$  & $\nu^{1}_+$   \\ 
$\lambda-\frac{1}{2}$ & $\lambda-\frac{1}{2}$& &   & $1, \lambda-\frac{1}{2}$ & $\nu^{1}_+$ & $\nu^{1}_-$ & $\nu^{0}_+, \nu^{0}_-$ &  $\nu^{0}_+, \nu^{0}_-$ \\
$\nu^{1}_+$ & $\nu^{1}_+$ & $\nu^{1}_+$  & $\nu^{1}_-$  &  $\nu^{1}_+$ & $X$ & $\lambda$ &  $\nu^{1}_+ $ &  $\nu^{1}_+$ \\
$\nu^{1}_-$ & $\nu^{1}_-$ & $\nu^{1}_-$  &  $\nu^{1}_+$ &  $\nu^{1}_-$  & $\lambda$  & $X$ &  $\nu^{1}_- $ &  $\nu^{1}_- $ \\
$\nu^{0}_+$ & $\nu^{0}_+$ & $\nu^{0}_+$  & & $\nu^{0}_+, \nu^{0}_-$ & $ \nu^{1}_+$ & $ \nu^{1}_- $ & $X$ & $X$ \\
$\nu^{0}_-$ & $\nu^{0}_-$ & $\nu^{0}_-$  &  & $\nu^{0}_+, \nu^{0}_-$ & $ \nu^{1}_+$ & $\nu^{1}_- $  & $X$ & $X$
\end{tabular}
\vspace{5pt}\\
where $X = 1,0, \lambda - \frac{1}{2}, \nu^{0}_+, \nu^{0}_-$
\caption{Fusion law for $\vert \alpha \vert = 2$}
\label{tab:al2special}
\end{table}
\end{proposition}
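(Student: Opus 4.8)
The plan is to bootstrap from the preceding proposition, which already establishes that $A_C$ is an axial algebra with the non-degenerate structure and gives the generic fusion table (Table \ref{tab:al2}) together with its $\mathbb{Z}_2$-grading. First I would argue that the refined table (Table \ref{tab:al2special}) is simply the table of Table \ref{tab:al2} with the extra assumptions on the $b$ and $c$ parameters imposed. Concretely, the entries that differ are exactly the $\nu^p_\pm \star \nu^q_\iota$ entries, and each of these is handled by a lemma already proved: the special case of Lemma \ref{al2nu1*nu1} gives $\nu^1_\epsilon \star \nu^1_\epsilon = X$ and $\nu^1_\epsilon \star \nu^1_{-\epsilon} = \lambda$; Lemma \ref{al2nu1*nu0} gives $\nu^1_\epsilon \star \nu^0_\iota = \nu^1_\epsilon$ (here I need to note that the hypothesis $b_{\beta,\gamma} = b_{\alpha_i+\beta,\gamma} = b_{\beta,\alpha_i+\gamma}$ for $\beta \in C_{\alpha_i}(1)$, $\gamma \in C_{\alpha_i}(0)$ follows from the blanket hypothesis $b_{\beta,\gamma} = b_{\alpha_i+\beta,\gamma}$ applied to both $\beta$ and, via the weight-set assumption on $b_{\alpha,\beta}$ from Theorem \ref{codeisaxialalg}, to $\gamma$ — since $\alpha_i + \gamma \in C_{\alpha_i}(0)$ when $\gamma \in C_{\alpha_i}(0)$); and Lemma \ref{al2nu0*nu0} gives $\nu^0_\epsilon \star \nu^0_\iota = X$ unchanged. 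The $\lambda \star \nu^1_\pm = \nu^1_\mp$ entry comes from Lemma \ref{lambda*nu} using $\xi_\beta = 0$ for $\beta \in C_\alpha(1)$, and $\lambda - \frac{1}{2} \star \nu^1_\pm = \nu^1_\pm$ comes from Lemma \ref{lambda-1/2*nu} for the same reason. So assembling Table \ref{tab:al2special} is a matter of quoting these results and checking that the hypotheses of each lemma are subsumed by the hypotheses of the proposition.

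Next I would verify that the proposed four-part partition
\[
A_{(0,0)} = A_1 \oplus A_0 \oplus A_{\lambda-\frac{1}{2}} \oplus A_{\nu^0_+} \oplus A_{\nu^0_-}, \quad A_{(1,0)} = A_{\nu^1_+}, \quad A_{(0,1)} = A_{\nu^1_-}, \quad A_{(1,1)} = A_\lambda
\]
is a $\mathbb{Z}_2 \times \mathbb{Z}_2$-grading of the fusion law, i.e.\ that $\mathcal{F}_s \star \mathcal{F}_t \subseteq \mathcal{F}_{s+t}$ for all $s,t \in \mathbb{Z}_2 \times \mathbb{Z}_2$. This is a finite check against Table \ref{tab:al2special}: there are $\binom{4}{2} + 4 = 10$ pairs $(s,t)$ up to symmetry to examine, and for each one I read off the relevant block of the table and confirm every eigenvalue appearing lies in the correctly-labelled part. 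The crucial entries are the ones involving $\lambda = A_{(1,1)}$ and the $\nu^1_\pm$: one checks $(1,0) \star (1,0) \subseteq (0,0)$ since $\nu^1_+ \star \nu^1_+ = X \subseteq A_{(0,0)}$; $(1,0)\star(0,1) \subseteq (1,1)$ since $\nu^1_+ \star \nu^1_- = \lambda$; $(1,0) \star (1,1) \subseteq (0,1)$ since $\nu^1_+ \star \lambda = \nu^1_-$; $(1,1)\star(1,1) \subseteq (0,0)$ since $\lambda \star \lambda = 1, \lambda - \frac{1}{2}$; and $(1,0)\star(0,0) \subseteq (1,0)$ since $\nu^1_+ \star \nu^0_\iota = \nu^1_+$ and $\nu^1_+ \star \{1,0,\lambda-\frac{1}{2}\} = \nu^1_+$. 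The remaining checks among the $(0,0)$-part entries are immediate from the block structure of Table \ref{tab:al2special}. Having a valid $T$-grading of the fusion law, the general machinery reviewed in Section \ref{sec:background} then upgrades this to a genuine $\mathbb{Z}_2 \times \mathbb{Z}_2$-grading of the algebra $A_C$ itself, since each small idempotent axis is an $\mathcal{F}$-axis for this $T$-graded fusion law.

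I do not expect a serious obstacle here: the proposition is essentially a corollary of the preceding one together with the case analyses already carried out in Lemmas \ref{al2nu0*nu0}--\ref{al2nu1*nu0}. The one point requiring genuine care — and the thing I would state most explicitly — is the bookkeeping that shows the proposition's single hypothesis "$b_{\beta,\gamma} = b_{\alpha_i+\beta,\gamma}$ and $c_\beta = c_{\alpha_i+\beta}$ for all $\beta \in C_{\alpha_i}(1)$, $\alpha_i \in S$, $\gamma \in C^* \setminus \{\alpha,\alpha^c\}$" does indeed imply all the separate hypotheses quoted in Lemmas \ref{al2nu1*nu1} and \ref{al2nu1*nu0} (in particular the symmetric condition $b_{\beta,\alpha_i+\gamma} = b_{\beta,\gamma}$, which needs the weight-set invariance of the $b$ parameters from Theorem \ref{codeisaxialalg} applied with roles of the two arguments swapped). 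Once that reduction is spelled out, the rest is reading entries off a table and checking $10$ containments.
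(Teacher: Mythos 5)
Your proposal is correct and takes essentially the same route as the paper, whose proof simply observes that Table \ref{tab:al2special} agrees with Table \ref{tab:al2} except for the entries $\nu^{1}_\epsilon \star \nu^{1}_\iota$ and $\nu^{1}_\epsilon \star \nu^{0}_\iota$, which follow from the special cases of Lemmas \ref{al2nu1*nu1} and \ref{al2nu1*nu0}, and then reads the grading off the table. You are in fact more explicit than the paper on the two points it leaves implicit: that the proposition's single hypothesis on the structure parameters subsumes the hypotheses of those two lemmas, and the finite verification that the proposed four-part partition really is a $\mathbb{Z}_2 \times \mathbb{Z}_2$-grading.
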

\begin{proof}
The table is the same as Table \ref{tab:al2}, except for $\nu^1_\epsilon \star \nu^1_\iota$ and $\nu^1_\epsilon \star \nu^0_\iota$ which follow from the special cases of Lemmas \ref{al2nu1*nu1} and \ref{al2nu1*nu0}.
\end{proof}

It remains to consider the degenerate case where $n=m=3,4$ and there is only one weight partition $(1,1)$.  Here, the code algebra is an axial algebra its fusion law given by Tables \ref{tab:al2} or \ref{tab:al2special}, depending on the structure parameters, where the $\nu^0_\pm$ are ignored.  We observe that such a fusion law is still $\mathbb{Z}_2$- or $\mathbb{Z}_2 \times \mathbb{Z}_2$-graded depending on the structure parameters.


\section{$\mathbb{Z}_2$-grading}\label{sec:grading}

In this section we examine the fusion laws of the code algebras which are axial algebras more carefully.  We will classify when the fusion law for the small idempotents is $\mathbb{Z}_2$-graded.

\begin{theorem}\label{Z2grading}
We assume the assumptions of Theorem $\ref{codeisaxialalg}$.  Then the axial algebra $A_C$ has a $\mathbb{Z}_2$-graded fusion law if and only if it is one of the following
\begin{enumerate}
\item[$1.$] $C = \mathbb{F}_2^n$, $|\alpha| = 1$ and
\begin{enumerate}
\item[\textup{(}a\textup{)}] $n = 2$, $a=-1$.
\item[\textup{(}b\textup{)}] $n=3$.
\end{enumerate}

\item[$2.$] $C = \bigoplus C_i$ is the direct sum of even weight codes all of the same length $m$, $n \geq 3$, $|\alpha| = 2$.

\item[$3.$] $|\alpha| > 2$ where $D = \proj_\alpha(C)$ is a projective code, $\1 \in D$ and $D$ has a codimension one linear subcode $D_+$, with $ \1 \in D_+$, which is the union of weight sets of $D$.

In this case, we have
\begin{align*}
A_+ &= A_1 \oplus A_0 \oplus A_\lambda \oplus A_{\lambda - \frac{1}{2}} \oplus
\bigoplus_{m \in wt(D_+)} A_{\nu^{(m, |\alpha|- m)}_\pm} \\
A_- &= \bigoplus_{m \in wt(D) - wt(D_+)} A_{\nu^{(m, |\alpha|-m)}_\pm}
\end{align*}
\end{enumerate}
Moreover, the examples occurring in parts $(1)$ and $(2)$ are precisely those given in Sections $\ref{al=1}$ and $\ref{al=2}$.  For $|\alpha|=2$, the example in Section $\ref{al=2}$ is $\mathbb{Z}_2 \times \mathbb{Z}_2$-graded if additional assumptions are made on the structure parameters.
\end{theorem}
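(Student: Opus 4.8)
The plan is to split according to $|\alpha|\in\{1,2,\,>2\}$, since the $\lambda$-eigenspace is absent when $|\alpha|=1$ and $\lambda\star\lambda$ degenerates when $|\alpha|=2$ (Lemma~\ref{lambda*lambda}). For $|\alpha|=1$ I would first observe that non-degeneracy forces the $\Aut(C)$-orbit of $\supp(\alpha)$ to cover all $n$ coordinates, so the conjugates span $\mathbb{F}_2^n$ and $C=\mathbb{F}_2^n$; for $|\alpha|=2$ the supports of the conjugates of $\alpha$ form an edge-transitive graph, whose span on each connected component is an even-weight code, and projectivity together with edge-transitivity gives $C=\bigoplus_{i=1}^r C_i$ with all $C_i$ the even-weight code of a common length $m\ge 3$. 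Thus in these two regimes the classification of gradings is exactly the content of the propositions of Sections~\ref{al=1} and~\ref{al=2} (including the degenerate small cases treated at the end of Section~\ref{al=2}), so the genuinely new work is the case $|\alpha|>2$.

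Next I would record the constraints that every grading $\mathcal{F}=\mathcal{F}_+\cup\mathcal{F}_-$ satisfies, read off Table~\ref{tab:small}: $1\in\mathcal{F}_+$ as the $\star$-identity; $0\in\mathcal{F}_+$ from $0\star 0=0$ (Lemma~\ref{0*0}); and, when $|\alpha|>2$, $\lambda,\lambda-\tfrac12\in\mathcal{F}_+$ from Lemmas~\ref{lambda*lambda} and~\ref{lambda-1/2*lambda-1/2}, the lone exception $a=-\tfrac1{|\alpha|}$ being eliminated via Lemma~\ref{lambda-1/2*nu} and the fact that $P_\alpha\ne\emptyset$ (else $C$ would be at most two-dimensional, contradicting projectivity and non-degeneracy when $|\alpha|>2$). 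I would then prove that $\nu^p_+$ and $\nu^p_-$ always lie in the same part (this is Lemma~\ref{nusame}): by Lemma~\ref{lambda*nu}, $\lambda\star\nu^p_\pm$ is $\{\nu^p_+,\nu^p_-\}$ or $\{\nu^p_\mp\}$, and since $\lambda\in\mathcal{F}_+$, in either case $\nu^p_-$ is in the same part as $\nu^p_+$. So a $\mathbb{Z}_2$-grading amounts to a sign $g(p)\in\{\pm\}$ on each weight partition $p$, equivalently a sign $g(m)$ on each weight $m\in wt(D)$ of $D:=\proj_\alpha(C)$, with $g(m)=g(|\alpha|-m)$.

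The heart of the $|\alpha|>2$ case is matching this data to the structure of $D$. Since $\proj_\alpha$ is additive, for $\beta\in C_\alpha(p)$ and $\gamma\in C_\alpha(q)$ the weight partition of $\beta+\gamma$ is the weight pair of $\proj_\alpha(\beta)+\proj_\alpha(\gamma)\in D$, so the set $N(p,q)$ appearing in the fusion law (Lemma~\ref{nu*nu}) records exactly the weights realised by sums of codewords from the weight sets indexing $p$ and $q$. If a non-trivial grading exists, put $D_+:=\bigcup\{W_m(D):g(m)=+\}$; the grading condition applied to $\nu^p_\epsilon\star\nu^q_\iota$ forces $g$ to be multiplicative on the relevant sums, so $D_+$ is a subgroup of $D$ and a union of weight sets, and it contains $\1$ because it contains $\0$ and the weights $m$ and $|\alpha|-m$ index the same label. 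It has codimension exactly one: at most one since $g$ factors through $D/D_+\hookrightarrow\mathbb{Z}_2$, and at least one since, by the previous paragraph, the only labels available for $\mathcal{F}_-$ are the $\nu^p_\pm$, so a non-trivial grading needs $D_+\ne D$. Conversely, given such a $D_+$, I would check that the displayed partition is a grading: the entries of Table~\ref{tab:small} not of the form $\nu^p_\epsilon\star\nu^q_\iota$ are immediate from the constraints already derived, for $\nu^p_\epsilon\star\nu^q_\iota$ the inclusion $N(p,q)\subseteq\mathcal{F}_{g(p)g(q)}$ is just coset arithmetic in $D/D_+$, and the extra labels $1,0,\lambda,\lambda-\tfrac12$ appearing in $\nu^p_\epsilon\star\nu^p_\iota$ lie in $\mathcal{F}_+$ and so cause no obstruction.

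I expect the main obstacle to be the ``only if'' direction, which is also where projectivity of $C$ and ``$C$ is generated by the $\Aut(C)$-conjugates of $\alpha$'' are genuinely needed: I must rule out that special values of the structure parameters shrink some $N(p,q)$ below its generic value and thereby permit a spurious grading. As in the argument for $|\alpha|=1$, $n\ge 4$ in Section~\ref{al=1}, the remedy is to exhibit, for each pair of weights whose sum would break multiplicativity of $g$, explicit codewords $\beta,\gamma$ with $\gamma\ne\beta,\beta^c,\alpha+\beta,\alpha+\beta^c$ — their existence supplied by projectivity and by $C$ being spanned by the orbit of $\alpha$ — for which Lemma~\ref{nu*nucalc}(3), using that the $b$ parameters depend only on weight sets and that $\xi_\beta\ne 0$, forces $w^\beta_\epsilon w^\gamma_\iota\ne 0$ in the offending eigenspace. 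The closing assertions of the theorem then follow: cases $(1)$ and $(2)$ reproduce the examples of Sections~\ref{al=1} and~\ref{al=2}, and the $\mathbb{Z}_2\times\mathbb{Z}_2$ refinement when $|\alpha|=2$ is Proposition~\ref{Z2xZ2}.
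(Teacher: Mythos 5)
Your overall architecture is the same as the paper's: the case split on $|\alpha|\in\{1,2,>2\}$, pinning $1,0,\lambda,\lambda-\tfrac12$ in $\mathcal{F}_+$ (with the $n=2$, $a=-1$ exception), reducing the $|\alpha|=1,2$ cases to Sections \ref{al=1} and \ref{al=2}, and, for $|\alpha|>2$, converting the grading into a weight-set-determined index-two subcode $D_+$ of $D=\proj_\alpha(C)$, with the converse being coset arithmetic. Your graph-theoretic derivation of the even-weight decomposition for $|\alpha|=2$ is an acceptable substitute for Lemma \ref{evenweightC}, and your route to $\lambda-\tfrac12\in\mathcal{F}_+$ via $1,\lambda-\tfrac12\in\lambda\star\lambda$ is essentially the paper's Corollary \ref{10lambdagrading}.

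There is, however, a genuine gap in the $|\alpha|>2$ case, concentrated entirely on the weight partition $p_0=(0,|\alpha|)$. First, your argument that $\nu^p_+$ and $\nu^p_-$ share a grading uses $\lambda\star\nu^p_\pm$, but by Lemma \ref{lambda*nu} this product is \emph{empty} precisely when $p=p_0$, so it gives no information there; the paper instead uses the product $w^\beta_+w^\beta_-$, whose $e^\alpha$-coefficient $-2\xi_\beta b_{\beta,\alpha+\beta}$ is non-zero since $\xi_\beta\propto|\alpha|\neq 0$ (Lemma \ref{nugrading}). Second, and more seriously, your reduction to $D$ silently requires $\nu^{p_0}_\pm\in\mathcal{F}_+$: the fibre of $\proj_\alpha$ over $\0_D$ consists exactly of the codewords with partition $p_0$, so without this fact the grading of $C$ need not factor through $\proj_\alpha$ and $D_+$ need not contain $\0$ (you assert ``it contains $\0$'' without proof). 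Establishing it is the content of Lemmas \ref{qexists}, \ref{wwneq0} and \ref{nu0}: one takes $\beta$ with $p(\beta)=p_0$ and $\gamma$ in a different partition $q$ (which exists by projectivity), notes $p(\beta+\gamma)=q$, and deduces $g(p_0)g(q)=g(q)$ once some product $w^\beta_\epsilon w^\gamma_\iota$ is shown non-zero. Your proposed non-vanishing mechanism --- that $\xi_\beta\neq0$ forces a coefficient in Lemma \ref{nu*nucalc}(3) to be non-zero --- only works when $\beta$ and $\gamma$ lie in the \emph{same} partition, so that $\theta^\gamma_\iota=\theta^\beta_\iota$ and the hypotheses pair up the $b$-parameters to produce the factor $\theta^\beta_++\theta^\beta_-=-2\xi_\beta$; for the mixed products needed here the $b$'s are unrelated and a single product can vanish. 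The correct tool is the sign-flip argument of Lemma \ref{wwneq0}: $w^\beta_+w^\gamma_\iota=w^\beta_-w^\gamma_\iota=0$ would force $\theta^\beta_+=\theta^\beta_-$, a contradiction, and since $\nu^q_+,\nu^q_-$ share a grading this suffices. Without this ingredient the ``only if'' direction of case $3$ is incomplete.
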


The restrictions on the code in the third case are fairly mild.  Indeed, it is not difficult to see that if $D$ has even length and  contains any odd codewords, then the even weight codewords of $D$ form a linear subcode $D_+$ of codimension one and $\1 \in D_+$.  Other examples with $D$ even also exist.  It remains then to extend $D$ to a code $C$ such that conjugates of $\1_D \in D$ in $C$ generate $C$ and check that $C$ is projective.

\begin{example}
Consider the code $C$ with generating matrix
\[ \left( \begin{tabular}{ccccc}
1 & 0 & 0 & 0 & 1  \\
0 & 1 & 0 & 0 & 1 \\
0 & 0 & 1 & 0 & 1 \\
0 & 0 & 0 & 1 & 1 
\end{tabular} \right) \]
Since $C$ is the even weight code, it is projective and $\Aut(C) \cong S_5$.  So conjugates of $\alpha := (1,1,1,1,0)$ generate $C$.  Then $D = \proj_\alpha(C) \cong \mathbb{F}_2^4$ and $D_+$ is the codimension one subcode of all even weight codewords in $D$.  This satisfies the conditions given in Case $3$ of Theorem \ref{Z2grading}.
\end{example}

We will prove the theorem via a series of lemmas.   We will deduce what the necessary conditions on the code are and then show that these examples are indeed $\mathbb{Z}_2$-graded.  Suppose that the fusion law $\mathcal{F}$ for a small idempotent in $A$ is $\mathbb{Z}_2$-graded with partition $\mathcal{F} = \mathcal{F}_+  \sqcup \mathcal{F}_-$. 

\begin{lemma}\label{finf*f}
Let $f \in \mathcal{F}$.  If $f \in f \star f$, then $f \in \mathcal{F}_+$.
\end{lemma}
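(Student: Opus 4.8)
**Proof proposal for Lemma \ref{finf*f}.**

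The statement is almost immediate from the definition of a $\mathbb{Z}_2$-grading, and the plan is simply to unwind that definition. Recall that a $\mathbb{Z}_2$-grading of $\mathcal{F}$ is a partition $\mathcal{F} = \mathcal{F}_+ \sqcup \mathcal{F}_-$ such that $\mathcal{F}_s \star \mathcal{F}_t \subseteq \mathcal{F}_{st}$ for all $s, t \in \{+, -\}$, where we identify $\mathbb{Z}_2$ with the sign group. The key point is that in $\mathbb{Z}_2$ we have $(-)\cdot(-) = +$, so $\mathcal{F}_- \star \mathcal{F}_- \subseteq \mathcal{F}_+$, and this is disjoint from $\mathcal{F}_-$.

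First I would fix $f \in \mathcal{F}$ and suppose, for contradiction, that $f \in f \star f$ but $f \in \mathcal{F}_-$. Then $f \star f \subseteq \mathcal{F}_- \star \mathcal{F}_-$, and since the grading condition forces $\mathcal{F}_- \star \mathcal{F}_- \subseteq \mathcal{F}_{(-)(-)} = \mathcal{F}_+$, we conclude $f \star f \subseteq \mathcal{F}_+$. In particular $f \in \mathcal{F}_+$. But $\mathcal{F}_+$ and $\mathcal{F}_-$ are disjoint since $\{\mathcal{F}_+, \mathcal{F}_-\}$ is a partition, so $f$ cannot lie in both. This contradiction shows $f \in \mathcal{F}_+$, as required.

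There is essentially no obstacle here: the lemma is a formal consequence of the defining property of the grading together with the multiplication table of $\mathbb{Z}_2$. The only thing to be slightly careful about is that we are using $f$ as an element of $\mathcal{F}$ (a label, in the refined language of the earlier remark) rather than as an eigenvalue, so that "$f \in f \star f$" refers genuinely to the fusion law and the conclusion "$f \in \mathcal{F}_+$" is about the partition classes; no identification of distinct eigenvalues is being invoked. With that understood, the argument above is complete.
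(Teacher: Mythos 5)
Your proof is correct and is essentially identical to the paper's: both argue by contradiction that if $f \in \mathcal{F}_-$ then $f \in f \star f \subseteq \mathcal{F}_- \star \mathcal{F}_- \subseteq \mathcal{F}_+$, contradicting the disjointness of the partition. The paper states this in one line; your version merely spells out the same steps.
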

\begin{proof}
If $f$ were in $\mathcal{F}_-$, then $f \in f \star f \in \mathcal{F}_+$, a contradiction.
\end{proof}

\begin{corollary}\label{10lambdagrading}
We have
\begin{enumerate}
\item[$1.$] $1,0 \in \mathcal{F}_+$
\item[$2.$] if $|\alpha| >2$ then $\lambda \in  \mathcal{F}_+$
\item[$3.$] $\lambda - \frac{1}{2} \in \mathcal{F}_+$, except possibly when $|\alpha| = 1$ and $a = -1$
\end{enumerate}
\end{corollary}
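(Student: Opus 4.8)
The plan is to derive all three parts from Lemma~\ref{finf*f} together with the diagonal fusion rules computed in Section~\ref{sec:fusion}. For part~(1): since $e$ is idempotent, $1 \star 1 = \{1\}$, so Lemma~\ref{finf*f} immediately gives $1 \in \mathcal{F}_+$, and Lemma~\ref{0*0} gives $0 \in 0 \star 0$, hence $0 \in \mathcal{F}_+$. For part~(2): if $|\alpha| > 2$, then in particular $|\alpha| \neq 2$, so by Lemma~\ref{lambda*lambda} we have $\lambda \star \lambda = \{1, \lambda, \lambda - \tfrac12\}$; thus $\lambda \in \lambda \star \lambda$ and Lemma~\ref{finf*f} forces $\lambda \in \mathcal{F}_+$.

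For part~(3) I would split on the weight of $\alpha$. If $|\alpha| > 2$, part~(2) gives $\lambda \in \mathcal{F}_+$, so $\lambda \star \lambda = \{1, \lambda, \lambda - \tfrac12\} \subseteq \mathcal{F}_+ \star \mathcal{F}_+ \subseteq \mathcal{F}_+$, whence $\lambda - \tfrac12 \in \mathcal{F}_+$. If $|\alpha| = 2$, the $\lambda$-eigenspace is still nonzero (spanned by $t_i - t_j$ for the two coordinates in $\supp(\alpha)$), and by Lemma~\ref{lambda*lambda} we have $\lambda \star \lambda = \{1, \lambda - \tfrac12\}$; regardless of whether $\lambda$ lies in $\mathcal{F}_+$ or in $\mathcal{F}_-$, the set $\lambda \star \lambda$ is contained in $\mathcal{F}_+$ (it equals $\mathcal{F}_+ \star \mathcal{F}_+$ or $\mathcal{F}_- \star \mathcal{F}_-$), so again $\lambda - \tfrac12 \in \mathcal{F}_+$. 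Finally, if $|\alpha| = 1$ there is no $\lambda$-eigenspace, so I would argue directly from $\lambda - \tfrac12 \star \lambda - \tfrac12$: by Lemma~\ref{lambda-1/2*lambda-1/2} this set contains $\lambda - \tfrac12$ unless $a = -\tfrac{1}{|\alpha|} = -1$, and in every other case Lemma~\ref{finf*f} yields $\lambda - \tfrac12 \in \mathcal{F}_+$; the remaining possibility $|\alpha| = 1$, $a = -1$ is exactly the exception recorded in the statement (and indeed the grading~(\ref{w1-grading2}) shows it genuinely occurs).

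The argument is essentially a bookkeeping exercise, and the only point that needs care is the following: when $|\alpha| \geq 2$ and $a = -\tfrac{1}{|\alpha|}$, one has $\lambda - \tfrac12 \star \lambda - \tfrac12 = \{1\}$, which no longer contains $\lambda - \tfrac12$, so Lemma~\ref{finf*f} cannot be applied to $\lambda - \tfrac12$ directly. This is why the proof routes through the $\lambda$-eigenspace in the cases $|\alpha| = 2$ and $|\alpha| > 2$ — that eigenspace is available precisely because $|\alpha| > 1$ — and it is also the reason the exception in part~(3) is confined to $|\alpha| = 1$. I expect this to be the main (mild) obstacle; everything else follows by reading off Table~\ref{tab:small}.
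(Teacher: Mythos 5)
Your proof is correct and uses essentially the same ingredients as the paper's: Lemma~\ref{0*0} and Lemma~\ref{finf*f} for part (1), Lemma~\ref{lambda*lambda} for part (2), and for part (3) the key observation that when $|\alpha|\geq 2$ the eigenvalue $\lambda$ exists and $\lambda\star\lambda$ forces $\lambda-\tfrac{1}{2}$ into the same (positive) part as $1$, with Lemma~\ref{lambda-1/2*lambda-1/2} covering the remaining case and isolating the exception $|\alpha|=1$, $a=-1$. The only difference is cosmetic: the paper first invokes the diagonal rule for $\lambda-\tfrac{1}{2}$ and falls back on $\lambda\star\lambda$ only when $a=-\tfrac{1}{|\alpha|}$, whereas you route all of $|\alpha|\geq 2$ through $\lambda\star\lambda$ directly.
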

\begin{proof}
Part one follows from Lemma \ref{0*0} and the fact that this is the fusion law of an idempotent.  The second part follows from Lemma \ref{lambda*lambda}.  By Lemma \ref{lambda-1/2*lambda-1/2}, $\lambda - \frac{1}{2}$ is in $\mathcal{F}_+$ unless $a = -\frac{1}{|\alpha|}$.  However, provided $|\alpha| \neq 1$, then the eigenvalue $\lambda$ exists. Now, since $1, \lambda - \frac{1}{2} \in \lambda \star \lambda$ by Lemma \ref{lambda*lambda}, $\lambda-\frac{1}{2}$ must have the same grading as $1$, which is in the positive part.  Hence, $\lambda - \frac{1}{2} \in \mathcal{F}_+$, except possibly when $|\alpha| = 1$ and $a = -1$.
\end{proof}

To complete the grading for $\lambda - \frac{1}{2}$ we consider the one case remaining from above.  Note that, since $A_C$ is assumed to be non-degenerate, $n \geq 2$. This is a somewhat fiddly calculation.

\begin{lemma}\label{lambda-1/2grading}
If $|\alpha| \neq 1$, or $n \neq 2$, or $a \neq -1$, then $\lambda - \frac{1}{2} \in \mathcal{F}_+$.
\end{lemma}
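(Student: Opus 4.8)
The plan is to dispose of the single case that Corollary \ref{10lambdagrading}(3) leaves open. By that corollary, $\lambda - \tfrac{1}{2} \in \mathcal{F}_+$ unless $|\alpha| = 1$ and $a = -1$, so I would assume from the start that $|\alpha| = 1$ and $a = -1$; the hypothesis of the lemma then forces $n \neq 2$, and since $A_C$ is non-degenerate we have $n \geq 2$, hence $n \geq 3$. First I would record the structural consequences: as the conjugates of the weight-one codeword $\alpha$ generate $C$, we have $C = \mathbb{F}_2^n$ (as in Section \ref{al=1}), and for $n \geq 3$ there is a codeword $\beta \in C^* \setminus \{\alpha, \alpha^c\}$, so the unique weight partition $p := (0, |\alpha|) = (0,1)$ of $\alpha$ actually occurs and $\nu^p_+ \neq \nu^p_-$ are genuine eigenvalues appearing in the fusion law.

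Next I would verify that we are in the generic branch of Lemma \ref{lambda-1/2*nu}. We have $a = -1 \neq 1 = \tfrac{1}{|\alpha|}$, and for any $\beta \in C_\alpha(p)$ the quantity $|\alpha| - 2|\alpha \cap \beta| \in \{1, -1\}$ is nonzero in $\mathbb{F}$ since $\mathrm{char}(\mathbb{F}) \neq 2$, so $\xi_\beta \neq 0$. Hence Lemma \ref{lambda-1/2*nu} gives $\lambda - \tfrac{1}{2} \star \nu^p_+ = \{\nu^p_+, \nu^p_-\}$; in particular $\nu^p_+ \in \lambda - \tfrac{1}{2} \star \nu^p_+$.

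The grading argument is then purely formal. Suppose for contradiction that $\lambda - \tfrac{1}{2} \in \mathcal{F}_-$, and let $t \in \{+, -\}$ be the part with $\nu^p_+ \in \mathcal{F}_t$. The grading condition gives $\mathcal{F}_- \star \mathcal{F}_t \subseteq \mathcal{F}_{-t}$, and since $\nu^p_+ \in \lambda - \tfrac{1}{2} \star \nu^p_+$ we get $\nu^p_+ \in \mathcal{F}_{-t}$; but $-t \neq t$ in $\mathbb{Z}_2$ and $\{\mathcal{F}_+, \mathcal{F}_-\}$ is a partition, so this contradicts $\nu^p_+ \in \mathcal{F}_t$. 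Therefore $\lambda - \tfrac{1}{2} \in \mathcal{F}_+$. I do not expect a real obstacle: the only points requiring care are that the partition $(0,1)$ genuinely occurs — which needs $n \geq 3$, precisely the content of the hypothesis in the remaining case — and that the special sub-case of Lemma \ref{lambda-1/2*nu}, in which $\lambda - \tfrac{1}{2} \star \nu^p_\pm$ collapses to $\nu^p_\pm$, is excluded; both are handled above.
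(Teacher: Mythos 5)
Your reduction to the case $|\alpha|=1$, $a=-1$, $n\geq 3$ is exactly the paper's, and the formal grading argument at the end is sound as far as it goes, but the key step --- ``Lemma \ref{lambda-1/2*nu} gives $\lambda-\tfrac{1}{2}\star\nu^p_+=\{\nu^p_+,\nu^p_-\}$, in particular $\nu^p_+\in\lambda-\tfrac{1}{2}\star\nu^p_+$'' --- is not justified, and it is precisely where the real work lies. By the Remark in Section \ref{sec:fusion}, entries of the fusion law are only \emph{generic}: ``generically $\lambda-\tfrac{1}{2}\star\nu^p_\pm=\nu^p_+,\nu^p_-$'' asserts only that the product lands in $A_{\nu^p_+}\oplus A_{\nu^p_-}$, and the ``if and only if'' clause of Lemma \ref{lambda-1/2*nu} analyses solely when the product collapses onto the \emph{same-sign} component $A_{\nu^p_\pm}$; it says nothing about collapse onto the opposite-sign component. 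Concretely, $(2\mu c_\alpha t_\alpha-e^\alpha)w^\beta_+=-b_{\alpha,\beta}e^\beta+(-2\mu c_\alpha-b_{\alpha,\beta}\theta^\beta_+)e^{\alpha+\beta}$, and expanding this in the basis $w^\beta_+,w^\beta_-$ (using $\theta^\beta_+\theta^\beta_-=-1$) one finds that the $w^\beta_+$-component vanishes precisely when $\theta^\beta_-=b_{\alpha,\beta}/(\mu c_\alpha)$, a condition on the structure parameters excluded by none of your hypotheses. In that situation $\lambda-\tfrac{1}{2}\star\nu^p_+=\nu^p_-$, your displayed contradiction disappears, and the configuration ``$\lambda-\tfrac{1}{2}\in\mathcal{F}_-$ with $\nu^p_+$ and $\nu^p_-$ oppositely graded'' is not yet refuted. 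This is exactly the possibility the paper's proof takes seriously: it concedes that the grading would force $\lambda-\tfrac{1}{2}\star\nu^p_\epsilon=\nu^p_{-\epsilon}$ and then kills it with a \emph{second} computation, namely $w^\beta_\epsilon w^\beta_\epsilon$ from Lemma \ref{nu*nucalc}, which under the assumed grading would have to lie in $A_1\oplus A_0$ and thereby forces $\theta^\beta_+=\theta^\beta_-$, a contradiction.

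Your argument can be repaired along your own lines: the same-sign component of $(2\mu c_\alpha t_\alpha-e^\alpha)w^\beta_\epsilon$ can vanish for at most one choice of $\epsilon$, since both vanishing would force $\theta^\beta_+=\theta^\beta_-=b_{\alpha,\beta}/(\mu c_\alpha)$; hence $\nu^p_\epsilon\in\lambda-\tfrac{1}{2}\star\nu^p_\epsilon$ for at least one $\epsilon$, and your formal $\mathbb{Z}_2$-argument then applies to that $\epsilon$. But this requires an explicit computation that your proposal does not contain; as written, the bare citation of Lemma \ref{lambda-1/2*nu} does not deliver the containment you need.
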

\begin{proof}
For a contradiction, suppose that $\lambda - \frac{1}{2} \in \mathcal{F}_-$, so $|\alpha| = 1$ and $a=-1$ , but $n \neq 2$.  By assumption, the set of conjugates of $\alpha$ generate the code, so $C$ must be the whole code $\mathbb{F}_2^n$.  Since $n  \geq 3$, $C$ has a weight partition of $\alpha$ which is $p = (0,1)$.

Let $\beta \in C'_\alpha(p)$; with loss of generality, we may assume that $\alpha \cap \beta = \emptyset$.  Considering $\lambda-\frac{1}{2} \star \nu^p_\epsilon$ we have
\[
(2\mu c_\alpha t_\alpha - e^\alpha)(\theta^\beta_\epsilon e^\beta + e^{\alpha+\beta}) = - b_{\alpha, \beta} e^\beta + (-2\mu c_\alpha - b_{\alpha, \beta} \theta^\beta_\epsilon)e^{\alpha+\beta}
\]
This is contained in $A_{\nu^p_+} \oplus A_{\nu^p_-}$ and, since $b_{\alpha, \beta} \neq 0$, it is clear that it is not zero.  By assumption, $\lambda - \frac{1}{2} \in \mathcal{F}_-$.  Hence, to preserve the grading, $\nu^p_+$ and $\nu^p_-$ must have different gradings and $\lambda-\frac{1}{2} \star \nu^p_\epsilon = \nu^p_{-\epsilon}$.

Now, consider $\nu^p_\epsilon \star \nu^p_\epsilon$.  By Lemma \ref{nu*nucalc},
\begin{align*}
 w^\beta_\epsilon w^\beta_\epsilon &= (\theta^\beta_\epsilon)^2 c_\beta t_\beta + c_{\alpha+\beta} t_{\alpha+\beta} + 2\theta^\beta_\epsilon b_{\beta, \alpha+\gamma} e^\alpha \\
 &= \left( (\theta^\beta_\epsilon)^2 c_\beta + c_{\alpha+\beta} \right) t_\beta + c_{\alpha+\beta} t_\alpha + 2\theta^\beta_\epsilon b_{\beta, \alpha+\gamma} e^\alpha
\end{align*}
By the grading, this must be positive, so the above must lie in $A_1 \oplus A_0$.  Hence, for some $x \in \mathbb{F}^\times$,
\begin{align*}
\lambda x &= c_{\alpha+\beta} \\
\mu x &= 2\theta^\beta_\epsilon b_{\beta, \alpha+\gamma}
\end{align*}
Eliminating $x$, we find that
\[
\theta^\beta_\epsilon = \frac{\mu c_{\alpha  + \beta}}{2\lambda b_{\beta, \alpha+ \gamma}}
\]
However, this must hold for both $\epsilon = -1, +1$, contradicting the fact that $\theta^\beta_+ \neq \theta^\beta_-$.  Hence, $\lambda-\frac{1}{2}$ must be in the positive part.
\end{proof}

We consider the grading of $\nu^p_\pm$ for the weight partitions $p$ of $\alpha$.

\begin{lemma}\label{nugrading}
If $\xi_\beta \neq 0$, then the eigenspaces $\nu^p_+$ and $\nu^p_-$ have the same grading.
\end{lemma}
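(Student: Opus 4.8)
The plan is to show that the product of the two eigenspaces $A_{\nu^p_+}$ and $A_{\nu^p_-}$ always contains an eigenvector whose eigenvalue has already been placed in the positive part $\mathcal{F}_+$ of the grading. Since $\mathcal{F}_+$ and $\mathcal{F}_-$ are disjoint, this forces $\nu^p_+ \star \nu^p_- \subseteq \mathcal{F}_+$, i.e. $g(\nu^p_+)\,g(\nu^p_-) = +$, which is exactly the assertion that $\nu^p_+$ and $\nu^p_-$ have the same grading.

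Concretely, I would fix any $\beta \in C_\alpha(p)$ and examine the single product $w^\beta_+ w^\beta_-$. By part~$1$ of Lemma~\ref{nu*nucalc} this equals $\theta^\beta_+\theta^\beta_- c_\beta t_\beta + c_{\alpha+\beta} t_{\alpha+\beta} + b_{\beta,\alpha+\beta}(\theta^\beta_+ + \theta^\beta_-)\, e^\alpha$, and by part~$5$ of Lemma~\ref{coefsubs} the coefficient of $e^\alpha$ is $-2\xi_\beta\, b_{\beta,\alpha+\beta}$. Because $\xi_\beta \neq 0$ and $b_{\beta,\alpha+\beta} \neq 0$ by non-degeneracy (note $\alpha+\beta \in C^*$ and $\alpha+\beta \neq \beta, \beta^c$ since $\alpha \neq \0, \1$), this coefficient is non-zero. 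The key structural point I would then use is that, in the eigenvector basis of Table~\ref{tab:esp}, the basis element $e^\alpha$ occurs only in the $1$-eigenvector $e = \lambda t_\alpha + \mu e^\alpha$ and in the $\big(\lambda-\tfrac12\big)$-eigenvector $2\mu c_\alpha t_\alpha - e^\alpha$; every other listed eigenvector is supported on $\{t_i\} \cup \{e^\gamma : \gamma \neq \alpha\}$. Hence any element with non-zero $e^\alpha$-coordinate has non-zero component in $A_1 \oplus A_{\lambda - \frac12}$, and therefore $1 \in \nu^p_+ \star \nu^p_-$ or $\lambda - \tfrac12 \in \nu^p_+ \star \nu^p_-$.

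To close the argument I would invoke the gradings already established: $1 \in \mathcal{F}_+$ by Corollary~\ref{10lambdagrading}, and $\lambda - \tfrac12 \in \mathcal{F}_+$ by Lemma~\ref{lambda-1/2grading}. The single exception permitted in that lemma, namely $|\alpha| = 1$, $n = 2$, $a = -1$, cannot occur here: the very existence of the eigenspaces $A_{\nu^p_\pm}$ forces $C_\alpha(p) \neq \emptyset$, hence there is some $\beta \in C^* \setminus \{\alpha, \alpha^c\}$, which is impossible when $C \subseteq \mathbb{F}_2^2$ and $|\alpha| = 1$ (then $C^* \subseteq \{\alpha, \alpha^c\}$). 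Thus $\nu^p_+ \star \nu^p_-$ meets $\mathcal{F}_+$; as it is contained in the single part $\mathcal{F}_{g(\nu^p_+)\,g(\nu^p_-)}$ and $\mathcal{F}_+ \cap \mathcal{F}_- = \emptyset$, we conclude $g(\nu^p_+)\,g(\nu^p_-) = +$, as required.

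The step I expect to need the most care is the handling of eigenvalue coincidences. When $a = \tfrac1{|\alpha|}$ one has $\lambda - \tfrac12 = 0$, so the labels $\lambda - \tfrac12$ and $0$ name the same eigenspace, and similar collisions between $\nu^p_\pm$ and $\lambda$, $\lambda - \tfrac12$ are possible. The argument above survives because it only uses that the projection of $w^\beta_+ w^\beta_-$ onto the formal span of $\{t_\alpha, e^\alpha\}$ is non-zero, and both formal labels $1$ and $\lambda - \tfrac12$ lie in $\mathcal{F}_+$ regardless of such collisions; but to be safe I would phrase the whole argument at the level of the label set $\mathcal{F}$, as in the remark following the fusion-law theorem, rather than the numerical spectrum.
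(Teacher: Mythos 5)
Your proposal is correct and follows essentially the same route as the paper's proof: compute $w^\beta_+ w^\beta_-$ via Lemma \ref{nu*nucalc} and Lemma \ref{coefsubs}, observe that the $e^\alpha$-coefficient $-2\xi_\beta b_{\beta,\alpha+\beta}$ is non-zero, note that $e^\alpha \in A_1 \oplus A_{\lambda-\frac{1}{2}} \subseteq A_+$ by Corollary \ref{10lambdagrading} and Lemma \ref{lambda-1/2grading} (the exceptional case being excluded since a weight partition exists only when $n \geq 3$), and conclude the two eigenvalues must have the same grading. Your extra remarks on eigenvalue collisions are a sensible precaution but not needed beyond what the paper already addresses in its remark on merged eigenspaces.
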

\begin{proof}
Since we assume that $p$ is a weight partition, $n \geq 3$.  Hence, by Lemma \ref{lambda-1/2grading}, $\lambda - \frac{1}{2} \in \mathcal{F}_+$.  Let $\beta \in C_\alpha(p)$.   By Lemmas \ref{nu*nucalc} and \ref{coefsubs}, 
\[
w^\beta_+ w^\beta_- = -c_\beta t_\beta + c_{\alpha+\beta} t_{\alpha+\beta} -2 \xi_\beta b_{\beta, \alpha+\beta} e^\alpha
\]
Since $b_{\beta, \alpha+\beta} \neq 0$, the coefficient of $e^\alpha$ in the above is non-zero if and only if $\xi_\beta \neq 0$.  However, $e^\alpha$ is in $A_1 \oplus A_{\lambda-\frac{1}{2}}$ and, by Lemmas \ref{10lambdagrading} and our assumptions, this is in $A_+$.  Hence the above product is always in $A_+$ and the grading of $\nu^p_+$ and $\nu^p_-$ is the same.
\end{proof}

\begin{lemma}\label{nusame}
The eigenspaces $\nu^p_+$ and $\nu^p_-$ have the same grading, except possibly when $|\alpha| = 2$ and $p = (1,1)$.
\end{lemma}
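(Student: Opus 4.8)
The plan is to reduce the whole statement, via Lemma~\ref{nugrading}, to the single case that remains. Lemma~\ref{nugrading} already shows $\nu^p_+$ and $\nu^p_-$ share a grading whenever $\xi_\beta \neq 0$ for $\beta \in C_\alpha(p)$, so I would fix a weight partition $p$ with $\xi_\beta = 0$ and argue that either $|\alpha| = 2$ and $p = (1,1)$, or the two eigenspaces still lie in the same part. By Lemma~\ref{xi0}, $\xi_\beta = 0$ is equivalent to $|\alpha| - 2|\alpha \cap \beta|$ being zero in $\mathbb{F}$; since $|\alpha|$ is nonzero in $\mathbb{F}$ by hypothesis, this forces $0 < |\alpha \cap \beta| < |\alpha|$, and hence $|\alpha| \geq 2$. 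In particular no such $p$ occurs when $|\alpha| = 1$, and when $|\alpha| = 2$ the only integer strictly between $0$ and $2$ is $1$, so $p = (1,1)$, which is precisely the excluded partition.

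It then remains to handle $|\alpha| > 2$ with $\xi_\beta = 0$. In this range the $\lambda$-eigenspace exists (it requires only $|\alpha| \geq 2$) and, by Corollary~\ref{10lambdagrading}(2), $\lambda \in \mathcal{F}_+$. Since $\xi_\beta = 0$ also rules out $p = (0,|\alpha|)$ --- that partition has $|\alpha \cap \beta| = 0$, forcing $\xi_\beta \neq 0$ --- the second clause of Lemma~\ref{lambda*nu} applies and gives $\lambda \star \nu^p_+ = \nu^p_-$ (and symmetrically). Writing $\lambda \in \mathcal{F}_g$ and $\nu^p_\pm \in \mathcal{F}_{h_\pm}$ for the given $\mathbb{Z}_2$-grading, the fusion rule forces $h_- = g\,h_+$; as $g = +$, we get $h_+ = h_-$, finishing the argument.

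The proof has essentially no hard step; the only point deserving care is seeing why the same reasoning cannot be pushed through for $|\alpha| = 2$, $p = (1,1)$. There $\xi_\beta = 0$ once more and Lemma~\ref{lambda*nu} again delivers the sign-flip $\lambda \star \nu^{(1,1)}_\pm = \nu^{(1,1)}_\mp$, but now Corollary~\ref{10lambdagrading}(2) no longer forces $\lambda$ into $\mathcal{F}_+$. Indeed the $\mathbb{Z}_2 \times \mathbb{Z}_2$-graded examples of Section~\ref{al=2} show that $\lambda$ can genuinely lie outside $\mathcal{F}_+$, so that $\nu^{(1,1)}_+$ and $\nu^{(1,1)}_-$ really may receive opposite gradings. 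This is why the case has to be left open, and it is the sole obstacle to a uniform statement.
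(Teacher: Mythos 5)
Your argument is correct and follows essentially the same route as the paper: reduce via Lemma~\ref{nugrading} to the case $\xi_\beta = 0$, observe this cannot happen for $|\alpha|=1$ and forces $p=(1,1)$ when $|\alpha|=2$, and for $|\alpha|>2$ use $\lambda \in \mathcal{F}_+$ together with $\lambda \star \nu^p_\pm = \nu^p_\mp$ from Lemma~\ref{lambda*nu} to conclude. Your closing remark explaining, via the $\mathbb{Z}_2\times\mathbb{Z}_2$-graded examples, why the excluded case is genuinely necessary is a nice addition beyond what the paper records.
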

\begin{proof}
By Lemma \ref{nugrading}, we only need to consider the case where $\xi_\beta=0$.  This case does not occur when $|\alpha| = 1$ and we assume $|\alpha| \neq 2$, so we may consider $|\alpha| >2$.  Here, $\lambda$ is an eigenvalue and so, by Lemma \ref{lambda*nu}, $\lambda \star \nu^p_\pm = \nu^p_\mp$.  Since $\lambda \in \mathcal{F}_+$, this implies that they have the same grading. Now observe that, for $|\alpha| = 2$, $\xi_\beta=0$ implies $p = (1,1)$.
\end{proof}

From the above results, we have that $1,0,\lambda$ are all in $\mathcal{F}_+$, $\lambda -\frac{1}{2}$ is also in $ \mathcal{F}_+$ if $|\alpha| \neq 1$, and $\nu^p_+$ and $\nu^p_-$ have the same grading unless $| \alpha| = 2$.  This suggests the following split into cases:

\begin{enumerate}
\item $|\alpha| = 1$
\item $|\alpha| = 2$
\item $|\alpha| > 2$
\end{enumerate}

We now give some lemmas which will help determine the grading of the weight partition $(0, |\alpha|)$.

\begin{lemma}\label{qexists}
Suppose that $|\alpha| \neq 1$ and $p = ( 0, |\alpha|) \in P_\alpha$ is a weight partition of $\alpha$.  Then there exists a weight partition $q \neq p$.
\end{lemma}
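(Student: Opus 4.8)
The plan is to argue by contradiction: assume $|\alpha|\neq 1$ and that $p=(0,|\alpha|)$ is the \emph{only} weight partition of $\alpha$, and derive a contradiction with the hypotheses of Theorem~\ref{codeisaxialalg} — in particular with the projectivity of $C$ and the assumption that conjugates of $\alpha$ generate $C$. The key observation is that $p(\beta)=(0,|\alpha|)$ for a codeword $\beta\in C^*\setminus\{\alpha,\alpha^c\}$ means exactly $|\alpha\cap\beta|\in\{0,|\alpha|\}$, i.e.\ $\supp(\beta)$ is disjoint from $\supp(\alpha)$ or contains $\supp(\alpha)$. So the assumption ``$(0,|\alpha|)$ is the only weight partition'' says that for every codeword $\beta$, either $\supp(\alpha)\subseteq\supp(\beta)$ or $\supp(\alpha)\cap\supp(\beta)=\emptyset$.

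First I would unpack this into a statement about columns of a generating matrix of $C$: restricting to the coordinate set $I:=\supp(\alpha)$, every codeword of $C$ is either constant $1$ or constant $0$ on $I$. Equivalently, $\proj_I(C)$ is a code of length $|I|$ all of whose codewords are $\0$ or $\1$; hence all the columns of a generating matrix indexed by $I$ are equal to one another (each is the characteristic vector of $\{\beta\in C : \supp(\alpha)\subseteq\supp(\beta)\}$ after choosing a basis, but more simply: any two coordinates $i,j\in I$ satisfy $\beta_i=\beta_j$ for all $\beta\in C$). Since $|I|=|\alpha|\geq 2$, we have at least two equal columns in the generating matrix. By the column characterisation of projective codes recalled just before Lemma~\ref{Cproj} (equivalently, by Lemma~\ref{Cproj} applied to two distinct $i,j\in\supp(\alpha)$, for which no set $S$ of codewords can have $\bigcap_{\gamma\in S}\supp(\gamma)=\{i\}$), this contradicts the projectivity of $C$.

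The only subtlety is the boundary case where $P_\alpha$ might be empty rather than equal to $\{p\}$ — i.e.\ where there is no $\beta\in C^*\setminus\{\alpha,\alpha^c\}$ at all. But the statement of the lemma presupposes $p=(0,|\alpha|)\in P_\alpha$, so $P_\alpha\neq\emptyset$ and such a $\beta$ exists; I just need to make sure the argument above uses only that \emph{every} $\beta\in C^*\setminus\{\alpha,\alpha^c\}$ has $p(\beta)=(0,|\alpha|)$, together with the trivial facts $p(\alpha)$, $p(\alpha^c)$ also being ``degenerate'' on $I$. Thus the conclusion is that $P_\alpha$ must contain a second weight partition $q\neq p$.

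I do not expect a genuine obstacle here; the main care needed is bookkeeping about which coordinates lie in $\supp(\alpha)$ and translating ``$|\alpha\cap\beta|\in\{0,|\alpha|\}$ for all relevant $\beta$'' cleanly into ``two columns of the generating matrix coincide'', then invoking projectivity. A one-line alternative is to note that, under the assumption, the coordinates in $\supp(\alpha)$ are pairwise ``indistinguishable'' by $C$, so $C^\perp$ contains a weight-two vector supported on any two of them, contradicting the definition of a projective code.
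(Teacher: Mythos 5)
Your proposal is correct and is essentially the paper's argument in contrapositive form: the paper directly invokes projectivity of $C$ (via the weight-two dual codeword characterisation applied to two distinct coordinates $i,j \in \supp(\alpha)$, which exist since $|\alpha| \geq 2$) to produce a $\beta$ with $\alpha \cap \beta \neq \0, \alpha$, whence $p(\beta) \neq (0,|\alpha|)$. Your ``one-line alternative'' at the end is precisely the paper's proof.
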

\begin{proof}
Since $C$ is projective and $|\alpha| \neq 1$, there exists some $\beta \in C$ such that $\alpha \cap \beta \neq \0, \alpha$.  Hence, there exists some weight partition $q = p(\beta)$ not equal to $p = ( 0, |\alpha|)$.
\end{proof}

\begin{lemma}\label{wwneq0}
Suppose that $p,q \in P_\alpha$ are weight partitions of $\alpha$ and let $\beta \in C_\alpha(p)$, $\gamma \in C_\alpha(q)$ with $\gamma \neq \beta, \beta^c, \alpha+\beta, \alpha+\beta^c$.  If $w^\beta_\epsilon w^\gamma_\iota = 0$, then
\[
w^\beta_{-\epsilon} w^\gamma_\iota \neq 0 \neq w^\beta_\epsilon w^\gamma_{-\iota}
\]
\end{lemma}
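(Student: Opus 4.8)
The plan is to read off $w^\beta_\epsilon w^\gamma_\iota$ from Lemma~\ref{nu*nucalc}(3) and use that the two codeword elements appearing there are linearly independent. First I would check that, under the hypothesis $\gamma \neq \beta, \beta^c, \alpha+\beta, \alpha+\beta^c$, both $\beta+\gamma$ and $\alpha+\beta+\gamma$ lie in $C^*\setminus\{\alpha,\alpha^c\}$ and are distinct: if $\beta+\gamma$ (respectively $\alpha+\beta+\gamma$) were one of $\0,\1,\alpha,\alpha^c$ this would immediately force $\gamma$ into the excluded set, and $\beta+\gamma \neq \alpha+\beta+\gamma$ since $\alpha \neq \0$. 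Consequently $e^{\beta+\gamma}$ and $e^{\alpha+\beta+\gamma}$ are two distinct basis vectors of $A_C$, and Lemma~\ref{nu*nucalc}(3) gives that $w^\beta_\epsilon w^\gamma_\iota = 0$ precisely when
\begin{align*}
\theta^\beta_\epsilon \theta^\gamma_\iota\, b_{\beta,\gamma} + b_{\alpha+\beta,\alpha+\gamma} &= 0, \\
\theta^\beta_\epsilon\, b_{\beta,\alpha+\gamma} + \theta^\gamma_\iota\, b_{\alpha+\beta,\gamma} &= 0.
\end{align*}

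Next, suppose for a contradiction that both $w^\beta_\epsilon w^\gamma_\iota = 0$ and $w^\beta_{-\epsilon} w^\gamma_\iota = 0$. Writing the second displayed equation for each of the two sign choices and subtracting eliminates the $b_{\alpha+\beta,\gamma}$ term and leaves $(\theta^\beta_\epsilon - \theta^\beta_{-\epsilon})\,b_{\beta,\alpha+\gamma} = 0$. Here $b_{\beta,\alpha+\gamma}$ is a genuine structure parameter — $\alpha+\gamma \neq \0,\1$ since $\gamma \neq \alpha,\alpha^c$, and $\alpha+\gamma \neq \beta,\beta^c$ since $\gamma \neq \alpha+\beta,\alpha+\beta^c$ — hence non-zero by non-degeneracy; and $\theta^\beta_+ \neq \theta^\beta_-$ is one of our standing assumptions. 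This contradiction shows $w^\beta_{-\epsilon} w^\gamma_\iota \neq 0$.

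For the remaining inequality $w^\beta_\epsilon w^\gamma_{-\iota} \neq 0$ I would simply invoke symmetry: the hypothesis on $\gamma$ is symmetric in $\beta$ and $\gamma$ (it is equivalent to $\beta \neq \gamma,\gamma^c,\alpha+\gamma,\alpha+\gamma^c$) and $A_C$ is commutative, so applying the case already established with the roles of $(\beta,\epsilon)$ and $(\gamma,\iota)$ interchanged does the job. I do not anticipate a real obstacle; the only point that needs a little care is the opening bookkeeping certifying that $e^{\beta+\gamma}$ and $e^{\alpha+\beta+\gamma}$ are honestly independent basis elements, because the equivalence between ``the product is zero'' and ``both coefficients vanish'' is what the whole argument rests on.
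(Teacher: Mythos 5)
Your proof is correct and follows essentially the same route as the paper: read off the two coefficients from Lemma \ref{nu*nucalc}(3), note that both must vanish, and derive the contradiction $\theta^\beta_\epsilon = \theta^\beta_{-\epsilon}$. The only (immaterial) difference is that the paper subtracts the equations coming from the coefficient of $e^{\beta+\gamma}$, using $\theta^\gamma_\iota, b_{\beta,\gamma} \neq 0$, whereas you use the coefficient of $e^{\alpha+\beta+\gamma}$ and $b_{\beta,\alpha+\gamma} \neq 0$; your extra bookkeeping confirming that $e^{\beta+\gamma}$ and $e^{\alpha+\beta+\gamma}$ are distinct basis elements is a point the paper leaves implicit.
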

\begin{proof}
If $w^\beta_\epsilon w^\gamma_\iota = 0$, then by Lemma \ref{nu*nucalc}, $\theta^\beta_\epsilon \theta^\gamma_\iota b_{\beta, \gamma} + b_{\alpha+\beta, \alpha+\gamma} = 0$.  For $w^\beta_{-\epsilon} w^\gamma_\iota$ to equal zero, we would also require $\theta^\beta_{-\epsilon} \theta^\gamma_\iota b_{\beta, \gamma} + b_{\alpha+\beta, \alpha+\gamma} = 0$ and hence $\theta^\beta_\epsilon = \theta^\beta_{-\epsilon}$, a contradiction.  Similarly $w^\beta_\epsilon w^\gamma_{-\iota} \neq 0$.
\end{proof}

\begin{lemma}\label{nu0}
Suppose that $|\alpha| \neq 1,2$ and $p = ( 0, |\alpha|) \in P_\alpha$ is a weight partition of $\alpha$.  Then $\nu^p_+$ and $\nu^p_-$ are in $\mathcal{F}_+$.
\end{lemma}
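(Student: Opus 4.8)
The plan is to show that the common grading of $\nu^p_+$ and $\nu^p_-$ is the positive one by producing a non-zero product of a $\nu^p_\pm$-eigenvector with a $\nu^q_\pm$-eigenvector, for a second weight partition $q$, which lands back inside the $\nu^q_\pm$-eigenspaces; since $\nu^q_\pm$ share a grading and the product must respect the grading, the grading of $\nu^p_\pm$ is forced to be trivial. Since $|\alpha| \neq 1, 2$, Lemma~\ref{nusame} says $\nu^p_+$ and $\nu^p_-$ lie in a common part $\mathcal{F}_s$ of the grading, $s \in \{+, -\}$, and the goal is $s = +$. By Lemma~\ref{qexists} (using $|\alpha| \neq 1$ and projectivity of $C$) there is a weight partition $q \neq p$, and again by Lemma~\ref{nusame} the eigenvalues $\nu^q_+$ and $\nu^q_-$ share a common part $\mathcal{F}_t$.

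Next I would choose $\beta \in C_\alpha(p)$ with $\supp(\alpha) \cap \supp(\beta) = \emptyset$ — possible since $p = (0, |\alpha|)$, after replacing $\beta$ by $\alpha + \beta$ if need be — together with any $\gamma \in C_\alpha(q)$. Because $p(\gamma) = q \neq p = p(\beta) = p(\beta^c) = p(\alpha + \beta) = p(\alpha + \beta^c)$, the codeword $\gamma$ is automatically different from $\beta$, $\beta^c$, $\alpha + \beta$, $\alpha + \beta^c$; hence $\beta + \gamma$ and $\alpha + \beta + \gamma$ are non-constant codewords different from $\alpha$ and $\alpha^c$, so $e^{\beta + \gamma}$ and $e^{\alpha + \beta + \gamma}$ are genuine codeword elements. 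Finally $\supp(\alpha) \cap \supp(\beta + \gamma) = \supp(\alpha) \cap \supp(\gamma)$, so $p(\beta + \gamma) = p(\gamma) = q$.

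The key step is then as follows. By part $(3)$ of Lemma~\ref{nu*nucalc}, the product $w^\beta_\epsilon w^\gamma_\iota$ lies in $A_{\nu^q_+} \oplus A_{\nu^q_-}$ for every choice of signs $\epsilon, \iota$. If $w^\beta_+ w^\gamma_+ = 0$, then Lemma~\ref{wwneq0} forces $w^\beta_- w^\gamma_+ \neq 0$; so in either case there is a sign $\epsilon$ for which $w^\beta_\epsilon w^\gamma_+$ is a non-zero vector of $A_{\nu^q_+} \oplus A_{\nu^q_-} \subseteq A_t$. On the other hand $w^\beta_\epsilon \in A_s$ and $w^\gamma_+ \in A_t$, so this product lies in $A_{st}$; since the graded pieces of $A$ form a direct sum, $A_{st} \cap A_t \neq 0$ forces $st = t$, i.e. $s = +$. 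Hence $\nu^p_+, \nu^p_- \in \mathcal{F}_+$.

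The only real obstacle is the non-generic behaviour: for special values of the structure parameters individual basis products $w^\beta_\epsilon w^\gamma_\iota$ can vanish, so one cannot simply read off non-emptiness of $\nu^p_\epsilon \star \nu^q_\iota = N(p,q)$ from Table~\ref{tab:small}. This is exactly why Lemma~\ref{wwneq0} is invoked — it guarantees a non-vanishing witness among $w^\beta_\pm w^\gamma_+$. Everything else (the normalisation allowing $\supp(\alpha) \cap \supp(\beta) = \emptyset$ via Lemma~\ref{nuspace}, the exclusions making $\beta + \gamma$ a legitimate codeword element, and the identity $p(\beta + \gamma) = q$) is routine bookkeeping.
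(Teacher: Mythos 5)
Your proposal is correct and follows essentially the same route as the paper's proof: invoke Lemma \ref{qexists} for a second partition $q$, use Lemma \ref{wwneq0} to guarantee a non-vanishing product $w^\beta_\epsilon w^\gamma_\iota$ landing in the $\nu^q_\pm$-eigenspaces, and conclude via Lemma \ref{nusame} that the grading of $\nu^p_\pm$ must be trivial. The extra bookkeeping you supply (normalising $\alpha\cap\beta=\emptyset$ and checking $\beta+\gamma\neq\0,\1,\alpha,\alpha^c$) is left implicit in the paper but is the same argument.
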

\begin{proof}
By Lemma \ref{qexists}, there exists another weight partition $q \neq p$.  Let $\beta \in C_\alpha(p)$, $\gamma \in C_\alpha(q)$.  Since $q \neq p$, $\gamma \neq \beta, \beta^c, \alpha+\beta, \alpha+\beta^c$, so by Lemma \ref{wwneq0}, there exists $\epsilon, \iota = \pm1$ such that $w^\beta_\epsilon w^\gamma_\iota \neq 0$.  However, it is clear that $p(\beta+\gamma) = p(\gamma) = q$.  So, $\nu^q_\delta \in \nu^p_\epsilon \nu^q_\iota$.  Since $|\alpha| \neq 2$, by Lemma \ref{nusame}, $\nu^q_\pm$ have the same grading and therefore $\nu^p_\pm \in \mathcal{F}_+$.
\end{proof}

\subsection{$|\alpha|=1$}

When $|\alpha|=1$, we do not have the eigenvalue $\lambda$.  Also, it is clear that the only possible weight partition of $\alpha$ is $0 := (0,1)$  and this exists provided $n \geq 3$.  As noted previously, since the conjugates of $\alpha$ generate $C$, the code is the whole code $\mathbb{F}_2^n$, for some $n$.

Firstly, $n=1$ leads to a degenerate code algebra and so can be disregarded. Secondly, suppose that $n = 2$.  Then, $1,0 \in \mathcal{F}_+$ and the only other eigenvalue is $\lambda - \frac{1}{2}$.  By Lemma \ref{lambda-1/2grading}, this can only be in $\mathcal{F}_-$ when $a=-1$.  It is easy to see that the fusion law in this case is indeed $\mathbb{Z}_2$-graded and it is described in Section \ref{al=1}.

Thirdly, assume that $n \geq 3$.  Then, $1,0,\lambda-\frac{1}{2} \in \mathcal{F}_+$ and $\nu^0_+$ and $\nu^{0}_-$ have the same grading, so the only possible $\mathbb{Z}_2$-grading if where $\nu^{0}_+$ and $\nu^{0}_-$ are both in $\mathcal{F}_-$.  This case is described in the example in Section \ref{al=1} and a $\mathbb{Z}_2$-grading is only possible if $n = 3$.

\subsection{$|\alpha| =2$}

Now suppose that $|\alpha| =2$.  Recall that an indecomposable code is one which is not equivalent to the direct sum of two other codes. In other words, for all generating matrices $G$ and permutation matrices $P$, $GP$ is not a block diagonal matrix.

\begin{lemma}\label{evenweightC}
Let $C$ be an indecomposable linear code which is generated by weight two codewords.  Then, $C$ is the even weight code, which consists of all even codewords.
\end{lemma}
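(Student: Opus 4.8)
The claim is that an indecomposable binary linear code $C$ generated by weight two codewords is the full even weight code of its length. First I would set up the combinatorics: for each weight two codeword $\beta = e_i + e_j$ (where $e_i$ denotes the standard basis vector), associate the edge $\{i,j\}$ on the vertex set $\{1, \dots, n\}$. Let $\Gamma$ be the graph whose edges are exactly those coming from the generating set of weight two codewords. The key observation is that a sum of weight two codewords $e_{i_1}+e_{i_2} + \dots$ telescopes: $(e_{i_1}+e_{i_2}) + (e_{i_2}+e_{i_3}) = e_{i_1} + e_{i_3}$, so the span of the edge vectors is precisely the cycle space / even-subgraph span. Concretely, $C$ equals the span of $\{e_i + e_j : \{i,j\} \in E(\Gamma)\}$, and a vector $v \in \mathbb{F}_2^n$ lies in this span if and only if, on each connected component of $\Gamma$, $v$ has even support.

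The next step is to use indecomposability. If $\Gamma$ had an isolated vertex $i$ (a coordinate appearing in no weight two generator), then $C$ would be supported off that coordinate, contradicting non-degeneracy (we are assuming $\supp(C) = \{1,\dots,n\}$); alternatively this just says $C$ contains no such coordinate. More importantly, if $\Gamma$ were disconnected, say with components on vertex sets $V_1$ and $V_2$ partitioning $\{1,\dots,n\}$, then the generating matrix built from the edge vectors is block diagonal with respect to the splitting $\mathbb{F}_2^{V_1} \oplus \mathbb{F}_2^{V_2}$, so $C \cong C_1 \oplus C_2$ where $C_i$ is the code on $V_i$ — but then $C$ is decomposable, contrary to hypothesis (and both blocks are non-trivial provided each component has at least one edge, which it does since every vertex in $\supp(C)$ meets some generator). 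Hence $\Gamma$ is connected.

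With $\Gamma$ connected on all of $\{1,\dots,n\}$, the description above collapses: $v \in C$ if and only if $v$ has even weight. That is exactly the statement that $C$ is the even weight code of length $n$. I would write this out in one or two sentences, noting $\dim C = n-1$ as a sanity check (spanning tree of $\Gamma$ gives $n-1$ independent edge vectors, and the cycle space adds nothing new to the span).

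The main obstacle — really the only subtlety — is being careful about the ``block diagonal'' argument for indecomposability: one must check that the two codes $C_1, C_2$ arising from a disconnection are genuinely non-trivial (nonzero, i.e.\ each component carries at least one edge) so that the decomposition is a decomposition into \emph{non-trivial} codes in the sense of the definition given before Lemma \ref{Cproj}. This follows because each coordinate in $\supp(C)$ lies in the support of some weight two generator, hence is a non-isolated vertex, so every component of $\Gamma$ has an edge. Everything else is the standard identification of the span of edge vectors with the even-weight space of a connected graph, which I would state with only a brief indication of the telescoping sum rather than a full induction.
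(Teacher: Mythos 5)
Your proof is correct, but it takes a genuinely different route from the paper. You translate the problem into graph theory: the weight-two generators become edges of a graph $\Gamma$ on the coordinate set, the telescoping identity $(e_i+e_j)+(e_j+e_k)=e_i+e_k$ identifies the span of the edge vectors with the vectors of even support on each connected component, and indecomposability forces $\Gamma$ to be connected, whence $C$ is exactly the even weight code. The paper instead argues by induction on the length $n$: it forms the punctured subcode $C'$ of codewords vanishing in the last position, shows $C'$ is again indecomposable (via a generating-matrix argument: a block-diagonal generating matrix for $C'$ plus a weight-two codeword hitting the last coordinate would yield a block-diagonal generating matrix for $C$), applies the inductive hypothesis to conclude $C'$ is the even weight code of length $n-1$ and dimension $n-2$, and then reads off that $C$ has dimension $n-1$ and is even. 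Your approach buys a more transparent, non-inductive structural description (the spanning-tree dimension count is immediate), at the cost of importing the standard cycle-space fact; the paper's induction is more self-contained but has its own fiddly point, namely verifying that the punctured code inherits indecomposability and the weight-two generation. You were also right to flag the only real subtlety common to both arguments: when splitting off a disconnection one must check both blocks are non-trivial, which you do by observing every coordinate in $\supp(C)$ meets a weight-two generator, so every component of $\Gamma$ carries an edge.
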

\begin{proof}
We show this by induction on the length $n$.  Clearly it is true for length $2$.  So let $C$ be length $n$ and dimension $k$.  We define a code $C'$ from $C$ by removing all codewords with a $1$ in the last position and then puncturing the code in the last position.  So, $C'$ has length $n-1$ and is dimension $k-1$.  

We claim that $C'$ is indecomposable.  Suppose not, then there exists a generating matrix $M'$ of $C'$ and a permutation matrix $P$ such that $M'P$ is a block diagonal matrix.  Since $C$ is generated by weight two codewords there exists $\alpha \in C$ of weight two with a one in the last position.  Hence the matrix formed from $M'$ by adding a column of zeroes and the adjoining $\alpha$ has rank $k$ and so generates $C$.  However, by permutation of the columns it is of block diagonal form, so $C$ is decomposable, a contradiction.

Since $C'$ is indecomposable and generated by weight two elements, by induction, it is the even weight code.  In particular, it has dimension $n-2$.  Hence $C$ is an even code of dimension $n-1$ and so is the even weight code.
\end{proof}

\begin{corollary}\label{al=2code}
Let $|\alpha|=2$.  Then, $A_C$ is the code algebra of a code $C$ which is a direct sum of indecomposable even weight codes all of length $m \geq 3$.
\end{corollary}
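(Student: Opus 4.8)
The plan is to combine the (existence of a) decomposition of $C$ into indecomposable summands with Lemma \ref{evenweightC}. Write $C = \bigoplus_{i=1}^{r} C_i$ with each $C_i$ a non-trivial indecomposable code, obtained by repeatedly splitting; recall that the supports $\supp(C_i)$ then partition $\{1, \dots, n\}$, the generating matrix of $C$ is block diagonal with blocks generating the $C_i$, and a vector of $\mathbb{F}_2^n$ lies in $C$ exactly when each of its block-components lies in the corresponding $C_i$. The first key observation is that, since $C$ is generated by the conjugates of $\alpha$, all of which have weight two, every codeword of $C$ has even weight; in particular $C$ has no codeword of weight one.

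Next I would show that every weight two codeword of $C$ is supported inside a single block. If a weight two codeword $\beta$ had its two support points in two different blocks $C_i$ and $C_j$, then the $i$-th and $j$-th block-components of $\beta$ would be weight one codewords of $C_i$, respectively $C_j$, and hence (padded with zeros) weight one codewords of $C$, contradicting the previous paragraph. Consequently each conjugate $\tau \in S$ of $\alpha$ lies in a unique block $C_{b(\tau)}$; putting $S_i := \{\tau \in S : b(\tau) = i\}$ we get $C = \langle S \rangle = \bigoplus_{i=1}^{r} \langle S_i \rangle$ with $\langle S_i \rangle \subseteq C_i$, and comparing this block by block with $C = \bigoplus_{i=1}^{r} C_i$ forces $\langle S_i \rangle = C_i$. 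Thus each $C_i$ is an indecomposable code generated by weight two codewords, so by Lemma \ref{evenweightC} it is the even weight code of its length $m_i$.

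Finally I would extract the constraints $m_i \geq 3$ and $m_i = m_j$. Projectivity passes to each block: in the block diagonal generating matrix of $C$, two columns coming from different blocks are automatically distinct unless both are zero, so the whole matrix has distinct non-zero columns if and only if each diagonal block does; hence each $C_i$ is projective. A non-trivial even weight code is projective exactly when its length is at least three (length two gives the generating matrix $(1\ 1)$, which has a repeated column), so $m_i \geq 3$ for all $i$. For the equality of lengths, note that by the previous step the set of weight two codewords of $C$ is the disjoint union over $i$ of $\{e_k + e_l : k,l \in \supp(C_i),\ k \neq l\}$, so the graph on $\{1,\dots,n\}$ joining $k$ and $l$ whenever $e_k + e_l \in C$ is a disjoint union of one clique on each $\supp(C_i)$; its connected components are therefore exactly the $\supp(C_i)$. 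This graph is visibly $\Aut(C)$-invariant, so $\Aut(C)$ permutes the blocks $C_i$, mapping each to one of the same length. Hence every conjugate of $\alpha$ lies in the sum of those $C_i$ whose length equals that of the block containing $\alpha$, and since $S$ generates $C$ all the $m_i$ equal a common value $m \geq 3$, as required.

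The only step needing genuine care rather than formal bookkeeping is the exclusion of weight two codewords straddling two blocks; everything there rests on the observation that $C$ is an even code because it is generated by weight two codewords. Once that is in place, matching the two direct sum decompositions to see that each $C_i$ is itself generated by weight two codewords, and then tracking projectivity and the $\Aut(C)$-action through the block structure, are routine.
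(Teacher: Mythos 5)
Your proof is correct and follows essentially the same route as the paper: decompose $C$ into indecomposables, apply Lemma \ref{evenweightC} to each summand, use projectivity to exclude length two, and use the $\Aut(C)$-conjugacy of the generating set $S$ to force all block lengths to agree. The paper compresses this into three sentences; you supply the details it leaves implicit (in particular, that no weight-two codeword can straddle two blocks because $C$ is an even code, so each summand really is generated by weight-two codewords, and that $\Aut(C)$ permutes the blocks).
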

\begin{proof}
By Lemma \ref{evenweightC}, $C$ is a direct sum of codes of even weight.  Since all the codewords of $S$ are conjugate under the automorphism of the code, the length of each indecomposable subcode must be the same.  In particular the length cannot be $2$ as then the code would not be projective.
\end{proof}

Note that if $C$ is the even code of length $3$ or $4$, then there is just one weight partition $1 := (1,1)$.  In all other cases, there are two possible weight partitions, $0 := (0,2)$ and $1 = (1,1)$.

\begin{lemma}
Suppose that $n \geq 5$ and so $0 = (0, 2) \in P_\alpha$ is a weight partition of $\alpha$.  Then $\nu^0_+$ and $\nu^0_-$ are in $\mathcal{F}_+$.
\end{lemma}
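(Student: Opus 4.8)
The plan is to read the required gradings off the fusion rule for $\nu^0_\epsilon \star \nu^0_\iota$ already computed in Section~\ref{al=2}, together with the elementary observation of Lemma~\ref{finf*f}. Because $n \geq 5$, the weight partition $0 = (0,2)$ of $\alpha$ genuinely occurs, so Lemma~\ref{al2nu0*nu0} applies and gives
\[
\nu^0_\epsilon \star \nu^0_\epsilon = \{\, 1,\ 0,\ \lambda - \tfrac{1}{2},\ \nu^0_+,\ \nu^0_- \,\}
\]
for $\epsilon = \pm 1$; in particular $\nu^0_\epsilon \in \nu^0_\epsilon \star \nu^0_\epsilon$. By Lemma~\ref{finf*f}, if $\nu^0_\epsilon$ were in $\mathcal{F}_-$ we would get $\nu^0_\epsilon \in \nu^0_\epsilon \star \nu^0_\epsilon \subseteq \mathcal{F}_+$, a contradiction; hence $\nu^0_+, \nu^0_- \in \mathcal{F}_+$, which is the whole statement.

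If one prefers an argument that does not quote the full fusion rule, it can be assembled from its ingredients. For $\beta \in C_\alpha(0)$ we have $|\alpha \cap \beta| = 0$, so $|\alpha| - 2|\alpha \cap \beta| = 2 \neq 0$ in $\mathbb{F}$, whence $\xi_\beta \neq 0$ and Lemma~\ref{nugrading} places $\nu^0_+$ and $\nu^0_-$ in a common part $\mathcal{F}_g$ of the grading. By Lemma~\ref{al2N2} (which uses $n \geq 5$), $N(0,0) = \{\nu^0_+,\nu^0_-\}$, so by definition there are $\beta \in C_\alpha'(0)$ and $\gamma \in C_\alpha'(0)$ with $\gamma \neq \beta,\alpha+\beta,\beta^c,\alpha+\beta^c$ and $p(\beta+\gamma) = (0,2)$. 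By Lemma~\ref{wwneq0}, at least one of the products $w^\beta_\epsilon w^\gamma_\iota$ is non-zero, and by Lemma~\ref{nu*nucalc} it lies in $A_{\nu^0_+} \oplus A_{\nu^0_-}$; hence $\nu^0_+$ or $\nu^0_-$ occurs in $\nu^0_\epsilon \star \nu^0_\iota \subseteq \mathcal{F}_{g\cdot g} = \mathcal{F}_+$. That element also lies in $\mathcal{F}_g$ and $\mathcal{F}_+ \cap \mathcal{F}_- = \emptyset$, so $g = +$.

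No genuine obstacle is expected: the statement is essentially bookkeeping on top of the fusion computation in Section~\ref{al=2}. The only point meriting a little care is the existence of a pair $(\beta,\gamma)$ whose sum still has weight partition $(0,2)$ and defines an eigenvector — which is exactly what the hypothesis $n \geq 5$ provides, and indeed is the reason the partition $(0,2)$ is present at all in this case.
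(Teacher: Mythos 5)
Your second argument is correct and is what actually proves the lemma; it is close in spirit to the paper's proof but takes a slightly different route. The paper chooses a specific pair $\beta,\gamma \in C_\alpha(0)$ with $|\beta|=|\gamma|=2$ and $|\beta\cap\gamma|=1$, and computes directly that the coefficient of $e^{\alpha+\beta+\gamma}$ in $w^\beta_+w^\gamma_-$ equals $-2b_{\beta,\alpha+\gamma}\xi_\beta\neq 0$, using the identification of the $b$ parameters across the conjugates $\alpha+\beta$, $\alpha+\gamma$ of $\alpha$. You instead take any pair supplied by the combinatorial content of Lemma \ref{al2N2} and invoke Lemma \ref{wwneq0} to guarantee that at least one of the products $w^\beta_\epsilon w^\gamma_\iota$ is non-zero; combined with Lemma \ref{nugrading} (applicable since $\xi_\beta\neq 0$ for $p=(0,2)$) this pins the common grading to $+$. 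This is a clean alternative: it needs no special choice of $(\beta,\gamma)$ and no appeal to the $b$-parameter identities, at the cost of not identifying which product is non-zero.

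Your first argument, however, should not be relied on by itself. The equality in Lemma \ref{al2nu0*nu0} rests on Lemma \ref{al2N2}, which is stated only \emph{generically}; by the remark following Table \ref{tab:small}, generic entries are upper bounds, and for special values of the structure parameters the products realizing $\nu^0_\pm$ could a priori vanish, in which case $\nu^0_\epsilon\notin\nu^0_\epsilon\star\nu^0_\epsilon$ and Lemma \ref{finf*f} gives nothing. This is precisely why Section \ref{sec:grading} re-proves the non-vanishing of an explicit product rather than quoting the fusion table. Since your second argument supplies exactly this non-vanishing, the proposal as a whole is sound.
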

\begin{proof}
By Corollary \ref{al=2code}, $C = \bigoplus C_i$, where $c_i$ are indecomposable even weight codes of length $m \geq 3$. Since $n \geq 5$, there exists $\beta, \gamma \in C_\alpha(0)$ such that $|\alpha \cap \beta| = |\alpha \cap \gamma| =0$, $|\beta| = |\gamma| = 2$ and $|\beta \cap \gamma| = 1$.  In particular, this implies that $\gamma \neq \beta, \beta^c, \alpha+\beta, \alpha+\beta^c$.  So, by Lemma \ref{nu*nucalc},
\[
w^\beta_+ w^\gamma_- = ( \theta^\beta_+ \theta^\gamma_- b_{\beta,\gamma} + b_{\alpha+\beta, \alpha+\gamma})e^{\beta+\gamma} + (\theta^\beta_+ b_{\beta, \alpha + \gamma} + \theta^\gamma_- b_{\alpha+\beta, \gamma})e^{\alpha+\beta+\gamma}
\]
Since $|\beta| = |\gamma| = 2$ and $|\beta \cap \gamma| = 1$, $\alpha+\beta \in C_\gamma(1)$ and $\alpha+\gamma \in C_\beta(1)$, so by our assumptions on the $b$ structure parameters, $b_{\beta, \alpha + \gamma} = b_{\alpha+\beta, \gamma}$.  Also, as $|\alpha \cap \beta| = |\alpha \cap \gamma|$, $\theta^\gamma_- = \theta^\beta_-$, so the coefficient of $e^{\alpha+\beta+\gamma}$ above is
\[
b_{\beta, \alpha + \gamma}(\theta^\beta_+ + \theta^\beta_-) = -2 b_{\beta, \alpha + \gamma} \xi_\beta
\]
Since $p(\beta) = 0$, the above is non-zero and so we have $0 \neq \nu^0_+ \star \nu^0_- \in \nu^0_\pm$ and hence $\nu^0_+$ and $\nu^0_-$ are in $\mathcal{F}_+$.
\end{proof}

By Lemmas \ref{10lambdagrading}, \ref{lambda-1/2grading} and \ref{nu0}, the eigenvalues $1$, $0$, $\lambda - \frac{1}{2}$ and, where they exist, $\nu^0_+$ and $\nu^0_-$ are all in $\mathcal{F}_+$, so the only possible members of $\mathcal{F}_-$ are $\lambda$, $\nu^1_+$ and $\nu^1_-$. We see in the example from Section \ref{al=2} that in general, $\lambda \in \mathcal{F}_+$ and $\nu^1_+, \nu^1_- \in \mathcal{F}_-$.  However, if we make additional assumptions on the structure parameters, then we have a $\mathbb{Z}_2\times \mathbb{Z}_2$-grading, where the $\lambda$, $\nu^1_+$ and $\nu^1_-$ represent the three involutions in $\mathbb{Z}_2\times \mathbb{Z}_2$.

\subsection{$|\alpha| > 2$}

From now on we will assume that $|\alpha| > 2$.  Hence, $1, 0, \lambda, \lambda - \frac{1}{2}$ are all in $\mathcal{F}_+$.  By Lemma \ref{nusame}, we also have that the grading of $\nu^p_+$ is the same as that of $\nu^p_-$ for all $p \in P_\alpha$.  So we just need to determine the grading on the $\nu^p_\pm$.

We claim that the grading from the $\nu^p_\pm$ eigenspaces induces a grading of the code $C$.  That is, we can define a map $\gr\colon C \to \mathbb{Z}_2$ by
\begin{align*}
\beta &\mapsto \gr(w^\beta_\pm) \\
\0,\1,\alpha, \alpha^c &\mapsto 1
\end{align*}
for $\beta \in C^* \setminus \{ \alpha, \alpha^c \}$, where $\gr(w^\beta_\pm)$ denotes the grading in the algebra of $w^\beta_\pm$.

\begin{lemma}
Viewing $C$ as an additive group, $\gr\colon C \to \mathbb{Z}_2$ is a homomorphism of groups.
\end{lemma}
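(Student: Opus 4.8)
The plan is to transfer the $\mathbb{Z}_2$-grading of $A$ directly onto the additive group $C$ by testing it against the explicit eigenvector products of Lemma \ref{nu*nucalc}, so that the identity $\gr(\beta+\gamma) = \gr(\beta)\gr(\gamma)$ becomes a consequence of the fact that $A = A_+ \oplus A_-$ is a direct sum. First I would record that $\gr$ is well defined and depends only on the weight partition: for $\beta \in C^* \setminus \{\alpha,\alpha^c\}$ the vector $w^\beta_\pm$ spans a line inside $A_{\nu^{p(\beta)}_+} \oplus A_{\nu^{p(\beta)}_-}$, and since $|\alpha| > 2$ these two eigenspaces share a common grading by Lemma \ref{nusame}; hence $\gr(\beta)$ is that common grading, which in particular gives $\gr(\beta) = \gr(\alpha+\beta) = \gr(\beta^c)$ because $p(\beta) = p(\alpha+\beta) = p(\beta^c)$, consistently with the convention $\gr(\0) = \gr(\1) = \gr(\alpha) = \gr(\alpha^c) = 1$. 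Note that the set $\{\0,\1,\alpha,\alpha^c\}$ of ``special'' codewords is a subgroup of $C$, and translating by a special codeword leaves the weight partition of a non-special codeword unchanged.

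Next I would clear away the degenerate cases. If $\beta$ or $\gamma$ lies in $\{\0,\1,\alpha,\alpha^c\}$, then $\beta+\gamma$ is either special again (and $\gr(\beta+\gamma) = 1 = \gr(\beta)\gr(\gamma)$, both sides being $1$) or lies in the translate by a special codeword of the non-special factor, so that $\gr(\beta+\gamma)$ equals the grading of that factor, which is exactly $\gr(\beta)\gr(\gamma)$. If instead $\beta, \gamma \in C^* \setminus \{\alpha,\alpha^c\}$ but $\gamma \in \{\beta,\beta^c,\alpha+\beta,\alpha+\beta^c\}$, then $\beta+\gamma \in \{\0,\1,\alpha,\alpha^c\}$, so $\gr(\beta+\gamma) = 1$, while $\gr(\gamma) = \gr(\beta)$ (all four of these codewords have the same weight partition) and therefore $\gr(\beta)\gr(\gamma) = \gr(\beta)^2 = 1$ in $\mathbb{Z}_2$.

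This leaves the main case $\beta, \gamma \in C^* \setminus \{\alpha,\alpha^c\}$ with $\gamma \ne \beta, \beta^c, \alpha+\beta, \alpha+\beta^c$. Under these hypotheses $\beta+\gamma \ne \0, \1, \alpha, \alpha^c$, so $e^{\beta+\gamma}$ and $e^{\alpha+\beta+\gamma}$ are genuine codeword basis elements and together span $A_{\nu^{p(\beta+\gamma)}_+} \oplus A_{\nu^{p(\beta+\gamma)}_-}$. By Lemma \ref{wwneq0} at least one of the four products $w^\beta_\epsilon w^\gamma_\iota$ is non-zero; by part 3 of Lemma \ref{nu*nucalc} this product lies in $A_{\nu^{p(\beta+\gamma)}_+} \oplus A_{\nu^{p(\beta+\gamma)}_-} \subseteq A_{\gr(\beta+\gamma)}$, while by the grading together with the first paragraph it also lies in $A_{\gr(\beta)} A_{\gr(\gamma)} \subseteq A_{\gr(\beta)\gr(\gamma)}$. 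A non-zero element cannot sit in both $A_+$ and $A_-$, so $\gr(\beta+\gamma) = \gr(\beta)\gr(\gamma)$, as required. I do not anticipate a real obstacle, since for $|\alpha| > 2$ all the input lemmas are already available; the only point needing care is to be sure that the product $w^\beta_\epsilon w^\gamma_\iota$ in the main case carries no component outside the $\nu^{p(\beta+\gamma)}$ eigenspaces, which is precisely why I appeal to part 3 of Lemma \ref{nu*nucalc} and first verify that $\beta+\gamma$ and $\alpha+\beta+\gamma$ are legitimate non-constant codewords.
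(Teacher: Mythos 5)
Your proof is correct and follows essentially the same route as the paper's: well-definedness via Lemma \ref{nusame}, the main case by noting that the product formula of Lemma \ref{nu*nucalc}(3) confines $w^\beta_\epsilon w^\gamma_\iota$ to the $\nu^{p(\beta+\gamma)}_\pm$-eigenspaces while Lemma \ref{wwneq0} guarantees a non-zero such product, and the degenerate cases checked directly (the paper is merely terser, leaving the non-vanishing step implicit). The only slip is cosmetic: $e^{\beta+\gamma}$ and $e^{\alpha+\beta+\gamma}$ span a subspace of $A_{\nu^{p(\beta+\gamma)}_+}\oplus A_{\nu^{p(\beta+\gamma)}_-}$ rather than the whole sum, but only that containment is actually used.
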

\begin{proof}
First note that, by Lemma \ref{nusame}, the grading of $\nu^p_+$ is the same as that of $\nu^p_-$ for all $p \in P_\alpha$.  Hence the map is well-defined. For $\beta, \gamma \in C^* \setminus \{ \alpha, \alpha^c \}$ and $\gamma \neq \beta, \beta^c, \alpha+\beta, \alpha+\beta^c$, the grading of the code follows from that of the algebra.

So, suppose that $\gamma  = \beta, \beta^c, \alpha+\beta, \alpha+\beta^c$, then $p(\beta) = p(\gamma)$ and the product $w^\beta_\pm w^\gamma_\pm$ is in $A_1 \oplus A_0 \oplus A_\lambda \oplus A_{\lambda - \frac{1}{2}} \subset A_+$.  After checking the remaining cases, we see that $\gr$ is a homomorphism.
\end{proof}

We denote by $C_+$ and $C_-$ the positively and negatively graded parts of $C$, respectively.  Note that, since $\gr$ is a homomorphism, the kernel, which is $C_+$, has the same size as $C_-$.

Let $D = \proj_\alpha(C)$.  Since $C$ is projective, $D$ is too and $\proj_\alpha(\alpha)$ is the $\1 \in D$.

\begin{lemma}
We have $\gr(\ker(\proj_\alpha)) = 1$.
\end{lemma}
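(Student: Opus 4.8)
The plan is to unwind what $\ker(\proj_\alpha)$ actually is and then reduce the statement to Lemma \ref{nu0}. Recall that $\proj_\alpha$ retains precisely the coordinates in $\supp(\alpha)$ (that is exactly why $\proj_\alpha(\alpha)$ is the all-ones codeword of $D$), so
\[
\ker(\proj_\alpha) = \{ \beta \in C : \supp(\beta) \cap \supp(\alpha) = \emptyset \} = \{ \beta \in C : \alpha \cap \beta = \0 \},
\]
and the claim to be proved is that $\gr(\beta) = 1$ for every such $\beta$.

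First I would dispose of the codewords which are graded $1$ by definition of $\gr$. Clearly $\0 \in \ker(\proj_\alpha)$, and if $\1 \in C$ then $\alpha^c \in \ker(\proj_\alpha)$ as well, since $\supp(\alpha^c) = \supp(\alpha)^c$; both of these are sent to $1$ by $\gr$. On the other hand, $\alpha \notin \ker(\proj_\alpha)$ and $\1 \notin \ker(\proj_\alpha)$, because their images under $\proj_\alpha$ are both the all-ones codeword of $D$, which is nonzero as $|\alpha| > 0$.

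Next I would treat an arbitrary $\beta \in \ker(\proj_\alpha)$ with $\beta \notin \{\0, \alpha^c\}$. By the previous step such a $\beta$ lies in $C^* \setminus \{\alpha, \alpha^c\}$, and since $\alpha \cap \beta = \0$ we have $|\alpha \cap \beta| = 0$ and $|\alpha \cap \beta^c| = |\alpha|$, so its weight partition is $p(\beta) = (0, |\alpha|)$; in particular $(0,|\alpha|) \in P_\alpha$. As we are in the case $|\alpha| > 2$, Lemma \ref{nu0} applies and gives $\nu^{(0,|\alpha|)}_+, \nu^{(0,|\alpha|)}_- \in \mathcal{F}_+$, whence $\gr(\beta) = \gr(w^\beta_\pm) = 1$. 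Combined with the second step, $\gr$ is trivial on all of $\ker(\proj_\alpha)$, which is the assertion.

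The proof is short because Lemma \ref{nu0} does the real work here; the only point that needs a moment's care is the degenerate possibility that $(0,|\alpha|)$ is not a weight partition of $\alpha$ at all, but in that case the second step already forces $\ker(\proj_\alpha) \subseteq \{\0, \alpha^c\}$ and there is nothing left to check.
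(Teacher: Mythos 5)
Your proof is correct and follows essentially the same route as the paper's: identify $\ker(\proj_\alpha)$ as the codewords disjoint from $\alpha$, observe it is contained in $C_\alpha((0,|\alpha|)) \cup \{\0, \alpha^c\}$, and conclude via Lemma \ref{nu0} together with the definition of $\gr$. Your extra remark about the degenerate case where $(0,|\alpha|) \notin P_\alpha$ is a small added care the paper leaves implicit, but the argument is the same.
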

\begin{proof}
The kernel of the projection is
\[
\ker(\proj_\alpha) = \{ \beta : \alpha \cap \beta = \emptyset \}
\]
which is contained in $C_\alpha((0, |\alpha|)) \cup \{0, \alpha^c\}$.  By Lemma \ref{nu0} and the definition of the grading map, this is all in $C_+$.
\end{proof}

\begin{corollary}
The projection map induces a non-trivial grading on $D$.
\begin{center}
\begin{tikzpicture}[scale=3]
\node (1) at (0,1) {$C$};
\node (2) at (1,1) {$\mathbb{Z}_2$};
\node (3) at (0,0) {$D$};
\path[->, >=angle 90]

(1) edge node[anchor = south]{$\gr$} (2)
(1) edge node[anchor = north east]{$\proj$} (3)
(3) edge[dashed] node[anchor = north west]{$\gr$} (2);
\end{tikzpicture}
\end{center}
\end{corollary}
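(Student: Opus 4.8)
The plan is to deduce the statement directly from the two preceding lemmas: that $\gr \colon C \to \mathbb{Z}_2$ is a group homomorphism, and that $\gr$ is trivial on $\ker(\proj_\alpha)$.

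First I would record that $\proj_\alpha \colon C \to D$ is a surjective homomorphism of $\mathbb{F}_2$-vector spaces (equivalently, of abelian groups), so $D \cong C / \ker(\proj_\alpha)$. Since $\ker(\proj_\alpha) \subseteq \ker(\gr) = C_+$ by the preceding lemma, the factor theorem yields a unique group homomorphism $\gr \colon D \to \mathbb{Z}_2$ with $\gr \circ \proj_\alpha = \gr$. This is exactly the commutativity of the displayed triangle, so $\proj_\alpha$ transports the grading from $C$ to $D$; in particular $D = D_+ \sqcup D_-$ with $D_\pm = \proj_\alpha(C_\pm)$.

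It then remains to see that the induced grading is non-trivial, i.e.\ that $D_- \neq \emptyset$. Since the fusion-law grading $\mathcal{F} = \mathcal{F}_+ \sqcup \mathcal{F}_-$ is non-trivial and, in the present case $|\alpha| > 2$, the eigenvalues $1$, $0$, $\lambda$ and $\lambda - \frac{1}{2}$ all lie in $\mathcal{F}_+$ by Corollary \ref{10lambdagrading} and Lemma \ref{lambda-1/2grading}, some $\nu^p_\pm$ must lie in $\mathcal{F}_-$. Hence there is a codeword $\beta \in C^* \setminus \{ \alpha, \alpha^c \}$ with $\gr(\beta) = -1$, so $C_- \neq \emptyset$. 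As $\ker(\proj_\alpha) \subseteq C_+$, such a $\beta$ is not annihilated by $\proj_\alpha$, and $\gr(\proj_\alpha(\beta)) = \gr(\beta) = -1$; thus $\gr \colon D \to \mathbb{Z}_2$ is surjective, as required.

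The factoring step is immediate once $\ker(\proj_\alpha) \subseteq C_+$ is in hand, so the only real content is checking that non-triviality survives the projection: one must exhibit a negatively graded codeword that $\proj_\alpha$ does not kill, and this is precisely what the earlier inclusion $\ker(\proj_\alpha) \subseteq C_\alpha((0,|\alpha|)) \cup \{ \0, \alpha^c \} \subseteq C_+$ (resting on Lemma \ref{nu0}) secures. Everything else is routine.
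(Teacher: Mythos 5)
Your proof is correct and follows exactly the route the paper intends: the corollary is stated without proof as an immediate consequence of the two preceding lemmas, and your writeup (factor $\gr$ through $\proj_\alpha$ via the factor theorem using $\ker(\proj_\alpha)\subseteq C_+$, then get non-triviality from $C_-\neq\emptyset$, which holds because $1,0,\lambda,\lambda-\frac{1}{2}\in\mathcal{F}_+$ forces some $\nu^p_\pm$ into $\mathcal{F}_-$) is precisely the argument being left to the reader. No gaps.
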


Note that a weight partition $p = ( m, |\alpha|-m)$ of $\alpha$ corresponds to a union of two weight sets of $D$, namely the set of all codewords of $D$ of weights $m$, or $|\alpha|-m$.  Hence, $D_+$ is a union of weight spaces of $D$ and it is closed under taking complements, so $ \1 \in D_+$.  Since it is also closed under addition and $|D_+| = |D_-|$, it is also a codimension one subcode of $D$.

Conversely, if we have a code with the required properties, then it is clear that it induces a grading on the fusion law. This completes the proof of Theorem \ref{Z2grading}.

\end{document}